\newtheorem*{theorem*}{Theorem}
\newtheorem{theorem}{Theorem}[section]
\newtheorem*{corollary*}{Corollary}
\newtheorem{corollary}[theorem]{Corollary}
\newtheorem*{lemma*}{Lemma}
\newtheorem{lemma}[theorem]{Lemma}
\newtheorem*{claim*}{Claim}
\theoremstyle{remark}
\newtheorem*{remark*}{Remark}
\theoremstyle{definition}
\newtheorem*{definition*}{Definition}
\numberwithin{equation}{section}
\let\ker\relax
\DeclareMathOperator{\ker}{Ker}
\DeclareMathOperator{\ran}{Ran}
\title{The Method of Alternating Projections}
\author{Omer Ginat}
\begin{document}
\begin{titlepage}
    \begin{center}
        \Huge
        \vspace*{0.01cm}
        \textbf{The Method of Alternating Projections}

        \vspace*{2cm}

        \includegraphics[width=0.4\textwidth]{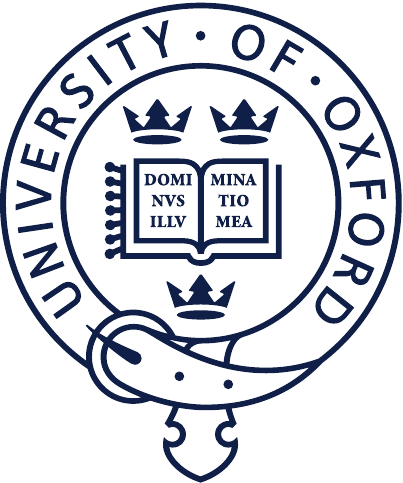}
        
        \vspace*{1.25cm}
        \LARGE
        Omer Ginat \\
        \vspace*{1cm}
        Honour School of Mathematics: Part C\\
        University of Oxford\\
        Hilary 2018 \\
        
        \vfill
        \Large
        
        Word count: 9967

    \end{center}
\end{titlepage}
\pagenumbering{roman} 
\newpage
\thispagestyle{plain}
\begin{abstract}
The method of alternating projections involves orthogonally projecting an element of a Hilbert space onto a collection of closed subspaces. It is known that the resulting sequence always converges in norm if the projections are taken periodically, or even quasiperiodically. We present proofs of such well known results, and offer an original proof for the case of two closed subspaces, known as von Neumann's theorem. Additionally, it is known that this sequence always converges with respect to the weak topology, regardless of the order projections are taken in. By focusing on projections directly, rather than the more general case of contractions considered previously in the literature, we are able to give a simpler proof of this result. We end by presenting a technical construction taken from a recent paper, of a sequence for which we do not have convergence in norm.
\end{abstract}

\newpage

\setcounter{tocdepth}{2}
\tableofcontents
\newpage
\pagenumbering{arabic}

\section{Introduction} \label{introduction}
The method of alternating projections has been widely studied in mathematics. Interesting not only for its rich theory, it also has many wide-reaching applications, for instance to the iterative solution of large linear systems, in the theory of partial differential equations, and even in image restoration; see \cite{Deu92} for a survey.

\subsection{What is the method of alternating projections?}

We begin by defining what we mean by the method of alternating projections. Let $H$ be a real or complex Hilbert space, $J\geq2$ an integer, and suppose that $M_1,\dots ,M_J$ are closed subspaces of $H$. For each $j \in \{1,\dots ,J\}$, let $P_j$ be the orthogonal projection onto the closed subspace $M_j$, and let $(j_n)_{n\geq1}$ be a sequence taking values in $\{1,\dots,J\}$. We define the sequence $(x_n)_{n\geq 0}$ by choosing an element $x_0 \in H$, and letting

\begin{equation*}
x_n = P_{j_n}x_{n-1}, \quad n\geq 1.
\end{equation*}
It is natural to ask under what conditions this sequence $(x_n)$ converges. This is often referred to as the method of alternating projections, and will be the focus of this dissertation. 

In order to motivate why we might expect $(x_n)$ to converge, it is useful to look at a simple example. Let $H = \mathbb{R}^2$, and consider the two closed subspaces 
\begin{align*}
M_1&= \{(x,y) \in \mathbb{R}^2 : x=y\}, \\
M_2 &= \{(x,y) \in \mathbb{R}^2 : y=0\}.
\end{align*}
We investigate what happens when we project $x_0 \in H$ repeatedly between $M_1$ and $M_2$ (see Figure \ref{alternating projections}).
\begin{figure}[H]
\begin{center}
\includegraphics[width=110mm]{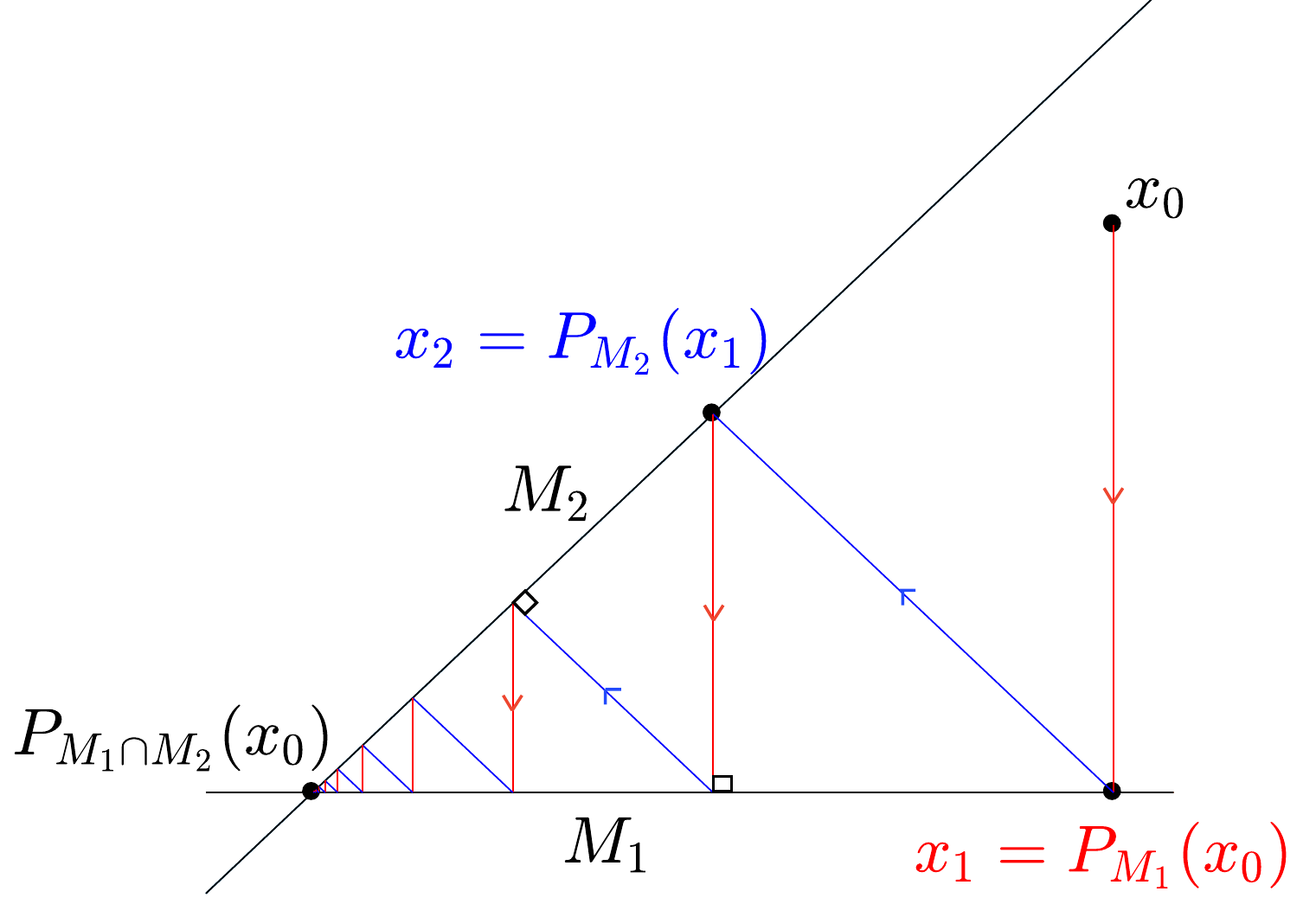}
\caption{The method of alternating projections for two subspaces of $\mathbb{R}^2$}
\label{alternating projections}
\end{center}
\end{figure}
We see that the resulting sequence converges to $(0,0)$: the projection of $x_0$ onto $M_1 \cap M_2$. More generally, we will see in Section \ref{convergence in norm} that if the sequence $(j_n)$ is taken to be periodic, then $(x_n)$ always converges in norm to the projection of $x_0$ onto $\bigcap_{j=1}^J M_j$. However, as we will observe in Section \ref{failure strong convergence}, we may find a sequence $(j_n)$ for which $(x_n)$ does not converge in norm.

In this dissertation we work through the major results relating to the convergence of $(x_n)$, sometimes offering new or more direct proofs than those in the literature today, and including important details where they have been omitted. 

\subsection{A brief history} \label{a brief history}

The first major result relating to the method of alternating projections is due to von Neumann \cite{von49}. In 1949, he proved that when we have two projections onto closed subspaces of a Hilbert space (that is, $J=2$), then $(x_n)$ converges in norm to the projection of $x_0$ onto the intersection of the two subspaces. The next significant advance happened in 1960, when Pr\'ager proved that the sequence $(x_n)$ converges in norm whenever $H$ is finite-dimensional \cite{Pra60}. Shortly after, in 1962, Halperin generalised von Neumann's theorem by proving that when the sequence $(j_n)$ is periodic, $(x_n)$ converges in norm \cite{Hal62}. \\ \\

In 1965, Amemiya and Ando proved a convergence result about products (compositions) of contractions, of which a corollary is that our sequence $(x_n)$ always converges weakly \cite{AmAn65}. This subsumes the result by Pr\'ager, since in finite-dimensional Hilbert spaces, the weak topology and norm topology coincide, and so weak convergence is equivalent to convergence in norm.

There had been no further convergence results until 1995, when Sakai improved on Halperin's theorem. He proved that when the sequence $(j_n)$ is so-called \textit{quasiperiodic}, we have convergence in norm \cite{Sak95}. Based on these positive results, it is natural to ask whether $(x_n)$ always converges in norm without restrictions on $H$ or the sequence $(j_n)$. Indeed, Amemiya and Ando posed this question in their paper \cite{AmAn65}. 

It was only in 2012 when Paszkiewicz \cite{Pas12} proved that for an infinite-dimensional Hilbert space, we may find five subspaces, a vector $x_0 \in H$, and a sequence $(j_n)$ such that $(x_n)$ does not converge in norm. In 2014, Kopeck\'a and M\"uller improved Paszkiewicz's construction from five subspaces to three \cite{KoMu14}. Indeed, this is the best we can do, since for the case of two subspaces, we are guaranteed convergence in norm by von Neumann's theorem \cite{von49}. Kopeck\'a and Paszkiewicz refined this construction in 2017. They went on to show that for any infinite-dimensional Hilbert space $H$, we may find three subspaces such that for any non-zero $x_0 \in H$, there is a sequence $(j_n)$ for which $(x_n)$ does not converge in norm \cite{KoPa17}.

\begin{figure}[H]
\begin{center}
\includegraphics[width=136mm]{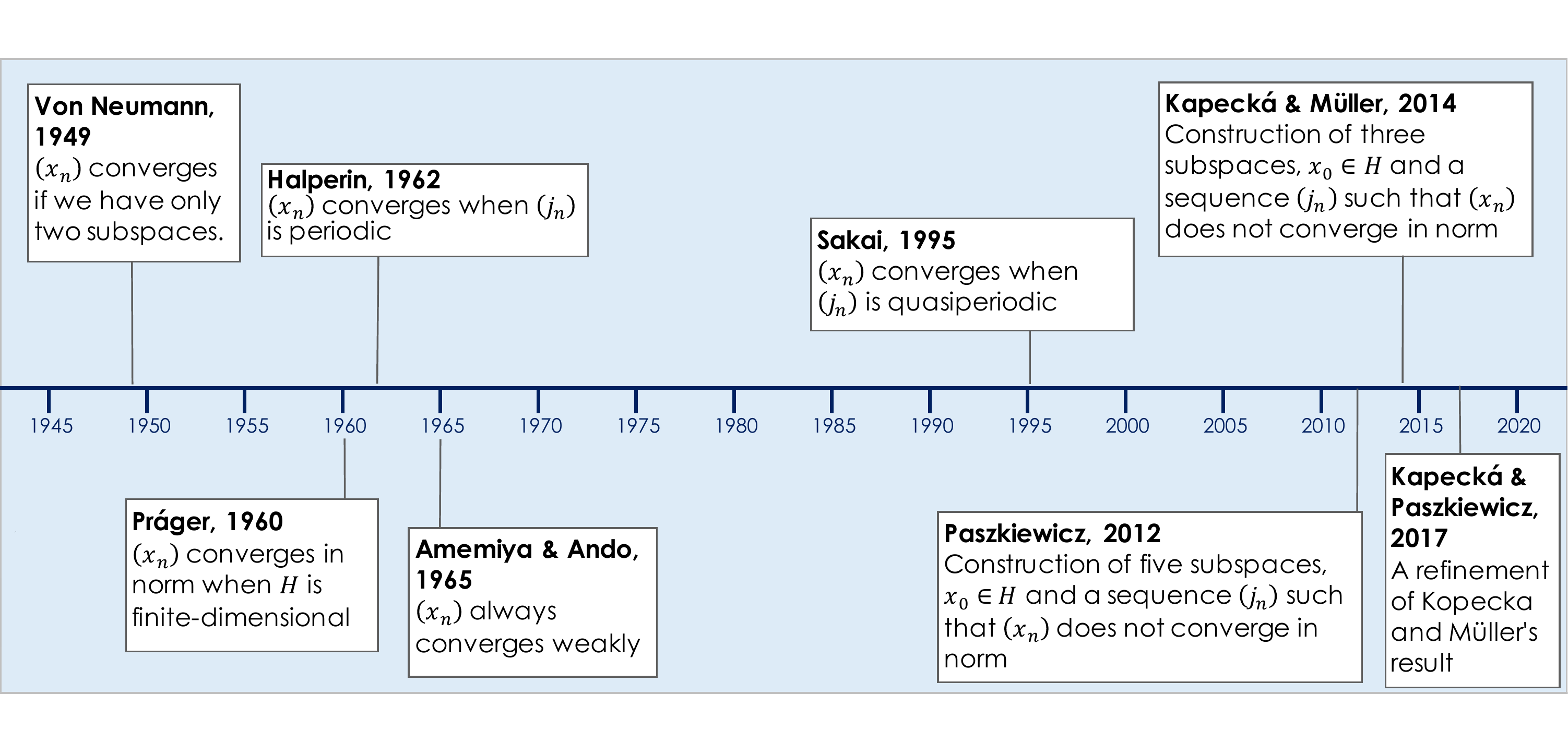}
\caption{A history of the method of alternating projections}
\end{center}
\end{figure}

\subsection{Notation} \label{notation}
Throughout this dissertation, $H$ will be a (real or complex) Hilbert space, $J\geq2$ an integer, and $M_1,\dots, M_J$ a family of closed subspaces of $H$ with intersection $M=\bigcap_{j=1}^{J} M_j$. Given a closed subspace $Y$ of $H$, we write $P_Y$ for the orthogonal projection onto $Y$, and for ease of notation, we write $P_1,\dots,P_J$ for the orthogonal projections onto $M_1, \dots ,M_J$. Throughout, $(j_n)_{n\geq1}$ will be a sequence taking values in $\{1,\dots,J\}$.  We define the sequence $(x_n)_{n\geq 0}$ by choosing a vector $x_0 \in H$, and letting
\begin{equation*}
x_n = P_{j_n}x_{n-1}, \quad n\geq1.
\end{equation*}
This will be the general setting for this dissertation. In particular, when we mention $(j_n)$ or $(x_n)$, we are referring to the sequences described above. 

We will write $B(H)$ for the space of bounded linear operators on $H$, and $B_H$ for the closed unit ball in $H$. Additionally, we write $\mathbb{F}$ for the (real or complex) scalar field of $H$. Other ad hoc notation will be introduced as needed.
\newpage

\section{Preliminaries}

We begin by recalling what it means to project orthogonally onto a closed subspace. It is a standard fact that for a closed subspace $Y$ of a Hilbert space $H$, we have $H=Y \oplus Y^\perp$. Hence each $x \in H$ can be written uniquely as $x=y+z$, where $y \in Y$ and $z \in Y^\perp$. The orthogonal projection $P_Y \colon H \to H$ onto $Y$ is given by  $P_Y(x) = y$. In fact, it is simple to see that $P_Y(x)$ is the unique closest point in $Y$ to $x$.

Before proving the important results about the method of alternating projections, it will help to introduce some elementary facts, to be referred to throughout this dissertation. The proofs are simple, but we include them to make the dissertation self-contained.

In the following lemma, we present a few elementary facts, mainly about projections.

\begin{lemma} \label{foundational}
Let $x \in H$, $Y$ be a closed subspace of $H$, and $P$ the orthogonal projection onto $Y$. Then
\begin{enumerate}[label=(\alph*)]
\item $P$ is linear, idempotent ($P^2 = P$), and self-adjoint ($P=P$*). \label{preliminary}
\item For vectors $u,v \in H$ with $u\perp v$, we have $\|u+v\|^2 = \|u\|^2 + \|v\|^2$. \label{pythagoras}
\item $\|x-Px\|^2 = \|x\|^2 - \|Px\|^2$. \label{projection equality}
\item $\|Px\| \leq \|x\|$ with equality if and only if $Px=x$. \label{projection norm}
\item $\|P\|=1$ if $Y\neq\{0\}$, and $\|P\|=0$ if $Y=\{0\}$.  \label{projection norm 1}
\item For any $x \in H$ and $y\in Y$, $\|x-Px\| \leq \|x-y\|$ with equality if and only if $Px=y$. \label{projection is onto closest point}
\item If $U$ and $V$ are closed subspaces of $H$ with $U \perp V$, then $U+V$ is closed.\label{sum projections closed}
\item If $U$ and $V$ are closed subspaces of $H$ with $U \perp V$, then $P_U+P_V = P_{U+V}$. \label{adding projections}
\end{enumerate}
\end{lemma}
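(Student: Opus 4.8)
The plan is to derive all eight parts from the single structural fact recalled just before the lemma, namely the orthogonal decomposition $H = Y \oplus Y^\perp$, and to prove them in essentially the order listed, since each part leans on the previous ones. For (a) I would fix, for a given $x$, the unique splitting $x = y + z$ with $y \in Y$ and $z \in Y^\perp$, so that $Px = y$; linearity is immediate because the decomposition is additive and homogeneous, idempotency because $Py = y$ whenever $y \in Y$, and self-adjointness by expanding $\langle Px, x' \rangle$ and $\langle x, Px' \rangle$ and observing that both equal $\langle y, y' \rangle$ once the cross terms are killed by orthogonality. Part (b) is the usual computation $\|u+v\|^2 = \langle u+v, u+v \rangle = \|u\|^2 + 2\,\mathrm{Re}\langle u,v\rangle + \|v\|^2$, with the middle term vanishing since $u \perp v$.

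Parts (c)--(f) are then short consequences. For (c), apply (b) to $u = Px$ and $v = x - Px$, which are orthogonal since $x - Px \in Y^\perp$, giving $\|x\|^2 = \|Px\|^2 + \|x - Px\|^2$; rearranging yields the claim, and (d) is read off immediately, the equality case being exactly $\|x - Px\| = 0$. Part (e) combines (d) (which gives $\|P\| \le 1$) with the observation that $P$ fixes every nonzero element of $Y$, so the operator norm is attained when $Y \neq \{0\}$, while $Y = \{0\}$ forces $P = 0$. Part (f) is (b) applied to the splitting $x - y = (x - Px) + (Px - y)$, whose two summands lie in $Y^\perp$ and $Y$ respectively, so $\|x - y\|^2 = \|x - Px\|^2 + \|Px - y\|^2 \ge \|x - Px\|^2$, with equality precisely when $Px = y$.

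For (g), take a sequence $u_n + v_n \to w$ with $u_n \in U$, $v_n \in V$; this sequence is Cauchy, and by Pythagoras (b), $\|(u_n + v_n) - (u_m + v_m)\|^2 = \|u_n - u_m\|^2 + \|v_n - v_m\|^2$, so $(u_n)$ and $(v_n)$ are each Cauchy, hence converge in $H$ to limits $u, v$ lying in $U$, $V$ by closedness, and $w = u + v \in U + V$. For (h), having used (g) to know that $U + V$ is closed so that $P_{U+V}$ is defined, I would set $Q = P_U + P_V$ and check directly that $x - Qx \perp U + V$: for $u' \in U$ and $v' \in V$, the inner product $\langle x - P_U x - P_V x,\, u' + v' \rangle$ expands into terms of which $\langle x - P_U x, u' \rangle = 0$ by definition of $P_U$, $\langle P_V x, u' \rangle = 0$ since $P_V x \in V \subseteq U^\perp$, and symmetrically for the $v'$ terms; since $Qx \in U + V$ as well, uniqueness of the orthogonal decomposition $H = (U+V) \oplus (U+V)^\perp$ forces $Qx = P_{U+V} x$.

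I do not anticipate any real obstacle: every item is a one- or two-line computation. The only point demanding attention is (h), where one must remember to invoke (g) first so that "the projection onto $U + V$" is meaningful, and must use the hypothesis $U \perp V$ twice over — once to get $P_V x \in U^\perp$ (and its mirror image) and once implicitly through the uniqueness of the decomposition. A viable alternative for (h) would be to verify that $Q$ is self-adjoint, idempotent, and has range $U + V$, and then quote the standard characterisation of orthogonal projections; I would nonetheless prefer the direct $x - Qx \perp U + V$ argument as the most self-contained.
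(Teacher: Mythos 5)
Your proposal is correct and for parts (a)--(g) follows essentially the same route as the paper: the $H = Y \oplus Y^\perp$ decomposition for (a) and (f), the Pythagoras computation for (b)--(d), and the Cauchy-components argument for (g). The only divergence is in (h), where the paper first shows $P_UP_V = P_VP_U = 0$, deduces that $P_U + P_V$ is idempotent, and then checks it acts as the identity on $U+V$ and as zero on $(U+V)^\perp$, whereas you verify directly that $Qx \in U+V$ and $x - Qx \perp U+V$ and invoke uniqueness of the orthogonal decomposition; both verifications are valid and of comparable length, yours being marginally more self-contained since it never needs to establish that $Q$ is a projection as an abstract operator.
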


\begin{proof} \ref{preliminary} For $i \in \{1,2\}$, let $x_i \in H$. Since $Y$ is a closed subspace of $H$, we have $H = Y \oplus Y^\perp$, so there are unique $y_i \in Y$ and $z_i \in Y^\perp$ such that $x_i = y_i + z_i$. For $\lambda \in \mathbb{F}$, we have \[P(x_1+\lambda x_2) = P(y_1 + \lambda y_2 + z_1 + \lambda z_2) = y_1 + \lambda y_2 = P(x_1) + \lambda P(x_2). \] Hence $P$ is linear. We also note that \[P^2(x_1) = P\big{(}P(y_1 + z_1)\big{)} = P(y_1) = y_1,\] so $P$ is idempotent. Finally, we have
\begin{equation*}
\langle Px_1,x_2 \rangle = \langle y_1,y_2 + z_2 \rangle = \langle y_1,y_2 \rangle = \langle y_1 + z_1,y_2 \rangle = \langle x_1,Px_2 \rangle.
\end{equation*}
Therefore $P$ is self-adjoint. \\ \\
\ref{pythagoras} Since $u \perp v$, we have $\langle u,v \rangle= 0 = \langle v,u \rangle$, and so \[\|u+v\|^2 = \langle u + v, u + v \rangle = \langle u,u \rangle + \langle v,v \rangle = \|u\|^2 + \|v\|^2. \]
\ref{projection equality} The result follows by applying \ref{pythagoras} with $u=Px$ and $v=x-Px$. \\ \\
\ref{projection norm} Applying \ref{projection equality}, we have that \[ \|Px\|^2 = \|x\|^2 - \|x - Px\|^2 \leq \|x\|^2,\] with equality if and only if $Px=x$. \\ \\
\ref{projection norm 1} The result follows immediately from \ref{projection norm}. \\ \\
\ref{projection is onto closest point} Since $H=Y \oplus Y^\perp$, then given $x\in H$, there are unique $\widetilde{y}\in Y$ and $\widetilde{z} \in Y^\perp$ such that $x=\widetilde{y}+\widetilde{z}$. So for any $y \in Y$, we have by \ref{pythagoras} that
\begin{equation*}
\|x-y\|^2 = \|(\widetilde{y}-y) + \widetilde{z}\|^2 = \|\widetilde{y}-y\|^2 + \|\widetilde{z}\|^2 \geq \|\widetilde{z}\|^2 = \|x - \widetilde{y}\|^2 = \|x - Px\|^2,
\end{equation*}
with equality if and only if $Px=y$. \\ \\
\ref{sum projections closed} Let $(x_n)$ be a Cauchy sequence in $U+V$. We write each $x_n$ as $u_n + v_n$, where $u_n \in U$, and $v_n \in V$. Since $U\perp V$, we have by \ref{pythagoras} that \[\|x_n - x_m\|^2 = \|(u_n-u_m) + (v_n - v_m)\|^2 =\|u_n-u_m\|^2 + \|v_n - v_m\|^2, \quad n,m \in \mathbb{N}.\] In particular, $\|u_n - u_m\| \leq \|x_n - x_m\|$ and $\|v_n - v_m\| \leq \|x_n - x_m\|$, so that $(u_n)$ and $(v_n)$ are both Cauchy sequences. Since $U$ and $V$ are closed subspaces of the Hilbert space $H$, they must be complete. Therefore $(u_n)$ and $(v_n)$ converge to some limits $u$ and $v$ respectively, and so $(x_n) = (u_n + v_n)$ converges to $u+v$. Hence $U+V$ is a complete subspace of $H$, and therefore closed. \\ \\
\ref{adding projections} By \ref{sum projections closed}, we know that $U+V$ is closed, and so $P_{U+V}$ is well defined. Since $U \perp V$, and since projections are self-adjoint, we have \[0 = \langle P_Vx,P_Uy \rangle = \langle P_UP_Vx,y \rangle = \langle x,P_VP_Uy \rangle, \quad x,y \in H.\] Hence $P_UP_V = 0 = P_VP_U$. Therefore, since $P_U$ and $P_V$ are idempotent, \[(P_U+P_V)^2 = P_U^2 + P_V^2 = P_U + P_V,\] and so $P_U + P_V$ is indeed a projection. We now note that for $x \in U$, $y\in V$, we have
\begin{equation*}
(P_U + P_V)(x+y) = P_Ux + P_Vx + P_Uy + P_Vy = x+y,
\end{equation*}
and for $z \in (U+V)^\perp$, $w \in H$, we have
\begin{equation*}
\langle (P_U+P_V)z,w \rangle = \langle z,(P_U+P_V)w \rangle = 0.
\end{equation*}
Hence $(P_U + P_V)$ is the identity on $U+V$, and zero on $(U+V)^\perp$, and so $P_U + P_V = P_{U+V}$ as claimed.
\end{proof}

The following lemma is still elementary, but the results are more specific. They will be particularly useful in proving Theorem \ref{von neumann} (von Neumann) \cite{von49} and Theorem \ref{halperin} (Halperin) \cite{Hal62}.

\begin{lemma} \label{foundational 2} Let $M_1,\dots ,M_J$ be a finite family of closed subspaces of a Hilbert space $H$, with intersection $M = \bigcap_{j=1}^J M_j $. For each $j \in \{1,\dots ,J\}$, let $P_j$ be the orthogonal projection onto the closed subspace $M_j$, and let $P_M$ be the orthogonal projection onto M. Let $T = P_J\dots P_1$. Then
\begin{enumerate}[label=(\alph*)]
\item $\bigcap_{k=1}^j \ker(I-P_k) = \ker(I-P_j\dots  P_1)$ for $j \in \{1,\dots ,J\}$. \label{useful for amemiya}
\item $Tx=x$ if and only if $x \in M$. \label{Tx=x}
\item $T^*x=x$ if and only if $x \in M$. \label{T*x=x}
\item Let $A$ be a contraction on $H$ (a bounded operator with operator norm at most $1$). Suppose that $\|A^{n+1}x - A^nx\| \to 0$ as $n\to \infty$ for every $x \in H$. Then $A^ny \to 0$ as $n\to \infty$ for every $y \in \overline{\ran(I-A)}$. \label{using kakutani}
\item If $V$ is a subspace of $H$, then $H=\overline{V} \oplus V^\perp$. \label{direct sum subspace}
\item $(\ran(I-T))^\perp = \ker(I-T^*)$. \label{ran ker equality}
\item $H=\overline{\ran(I-T)} \oplus \ker(I-T^*)$. \label{direct sum ran ker}
\end{enumerate}
\end{lemma}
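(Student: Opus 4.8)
The plan is to prove the seven parts essentially in the order stated, since \ref{Tx=x}, \ref{T*x=x} and \ref{direct sum ran ker} are short consequences of earlier parts or of Lemma \ref{foundational}, and only \ref{using kakutani} carries real analytic content.

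For \ref{useful for amemiya} the inclusion $\supseteq$ is the only thing to check, and I would do it by induction on $j$. The case $j=1$ is trivial. Assuming the statement for $j-1$, suppose $P_j\cdots P_1 x = x$. Since $P_{j-1}\cdots P_1$ is a product of contractions, $\|x\| = \|P_j(P_{j-1}\cdots P_1 x)\| \le \|P_{j-1}\cdots P_1 x\| \le \|x\|$, so both inequalities are equalities; the equality case of Lemma \ref{foundational}\ref{projection norm} then gives $P_j(P_{j-1}\cdots P_1 x) = P_{j-1}\cdots P_1 x$, and the left side equals $x$, so $P_{j-1}\cdots P_1 x = x$ and hence also $P_j x = x$. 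The inductive hypothesis handles $P_1 x = \cdots = P_{j-1} x = x$, giving $x \in \bigcap_{k=1}^j \ker(I-P_k)$. (One can equally run the single norm chain $\|x\| = \|P_J\cdots P_1 x\| \le \cdots \le \|P_1 x\| \le \|x\|$ and peel off equalities from the bottom; I find the induction slightly cleaner.) Since $\ker(I-P_k) = M_k$, taking $j = J$ in \ref{useful for amemiya} yields \ref{Tx=x}.

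For \ref{T*x=x}, I would observe that each $P_j$ is self-adjoint by Lemma \ref{foundational}\ref{preliminary}, so $T^* = (P_J\cdots P_1)^* = P_1\cdots P_J$ is again a product of the same projections in reverse order; applying \ref{Tx=x} to this product — the intersection $M$ is unchanged under reordering — gives the claim. Part \ref{using kakutani} is the substantive step: for $y \in \ran(I-A)$, write $y = x - Ax$, so $A^n y = A^n x - A^{n+1}x \to 0$ by hypothesis; for general $y \in \overline{\ran(I-A)}$, run an $\varepsilon/2$ argument, choosing $y' \in \ran(I-A)$ with $\|y-y'\|$ small and using $\|A^n y\| \le \|A^n(y-y')\| + \|A^n y'\| \le \|y-y'\| + \|A^n y'\|$, the first term controlled since $\|A\|\le 1$ and the second tending to $0$ by the previous case.

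The last three parts are general Hilbert space facts. For \ref{direct sum subspace} I would note $V^\perp = (\overline V)^\perp$ by continuity of the inner product, so $H = \overline V \oplus (\overline V)^\perp = \overline V \oplus V^\perp$. For \ref{ran ker equality} I would use the standard identity $(\ran S)^\perp = \ker S^*$ (since $x \perp \ran S$ iff $\langle x, Sz\rangle = 0$ for all $z$ iff $S^*x = 0$), applied to $S = I - T$ with $(I-T)^* = I - T^*$. Finally \ref{direct sum ran ker} is just \ref{direct sum subspace} applied to $V = \ran(I-T)$, combined with \ref{ran ker equality}. I expect the only place needing genuine care is \ref{using kakutani} together with keeping the norm-equality bookkeeping in \ref{useful for amemiya} clean; the rest is routine manipulation of definitions and Lemma \ref{foundational}.
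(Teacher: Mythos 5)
Your proposal is correct and follows essentially the same route as the paper's proof for all seven parts: the norm chain with the equality case of $\|Px\|\le\|x\|$ for \ref{useful for amemiya}, the reversed product for \ref{T*x=x}, the density/$\varepsilon$ argument for \ref{using kakutani}, and the standard identities $(\overline V)^\perp=V^\perp$ and $(\ran S)^\perp=\ker S^*$ for the remaining parts. The only cosmetic difference is that in \ref{useful for amemiya} you peel off the equalities from the top via a formal induction on $j$, whereas the paper peels from the bottom ($P_1x=x$ first); both are equivalent, as you yourself note.
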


\begin{proof}
\ref{useful for amemiya} If $x \in \bigcap_{k=1}^j \ker(I-P_k)$, then $P_kx=x$ for each $k \in \{1,\dots ,j\}$, and hence $P_j\dots  P_1x = x$. Conversely, if $x\in \ker(I-P_j\dots  P_1)$, then
\begin{equation*}
\|x\| = \|P_j\dots  P_1x\| \leq \|P_{j-1}\dots  P_1x\| \leq \dots  \leq \|P_1x\| \leq \|x\|.
\end{equation*}
Hence $\|P_1x\| = \|x\|$, and so by Lemma \ref{foundational}\ref{projection norm}, we have $P_1x=x$. But also $\|P_2P_1x\| = \|x\|$, so that $\|P_2x\| = \|x\|$. Lemma \ref{foundational}\ref{projection norm} then gives that $P_2x=x$. In this way, a simple induction shows that for each $k \in \{1,\dots ,j\}$, $P_kx=x$. \\ \\
\ref{Tx=x} Applying \ref{useful for amemiya} with $j=J$, we have
\begin{equation*}
Tx=x \iff x\in \ker(I-T) \iff x \in  \bigcap_{k=1}^J \ker(I-P_k) \iff x \in M.
\end{equation*}
\ref{T*x=x} An identical argument to \ref{useful for amemiya}, but with each $P_k$ replaced by $P_{j-k}$, gives \[\bigcap_{k=1}^j \ker(I-P_k) = \ker(I-P_1\dots  P_j), \quad j \in \{1,\dots ,J\}.\] We apply this with $j=J$, noting that $T^* = P_1 \dots P_J$, to get
\begin{equation*}
T^*x=x \iff x\in \ker(I-T^*) \iff x \in  \bigcap_{k=1}^J \ker(I-P_k) \iff x \in M.
\end{equation*}
\ref{using kakutani} If $x\in \ran(I-A)$, then $x=(I-A)w$ for some $w \in H$. So, by assumption,
\begin{equation*}
\|A^nx\| = \|A^n(I-A)w\| = \|A^nw - A^{n+1}w\| \to 0 \textnormal{ as } n \to \infty.
\end{equation*}
Now suppose $y \in \overline{\ran(I-A)}$. Let $\varepsilon > 0$. We can find $x \in \ran(I-A)$ such that $\|x-y\| < \varepsilon$. Then
\begin{equation*}
\|A^ny\| \leq \|A^nx\| + \|A^n(x-y)\| \leq \|A^nx\| + \|x-y\| < \|A^nx\| + \varepsilon.
\end{equation*}
Hence $\limsup_{n\to\infty} \|A^ny\| \leq \varepsilon$, and since $\varepsilon$ was arbitrary, we have $\|A^ny\| \to 0$ as $n\to \infty$. \\ \\
\ref{direct sum subspace} Since $\overline{V}$ is closed, we have $H = \overline{V} \oplus  (\overline{V})^\perp$. So we are done if we can show that $(\overline{V})^\perp = V^\perp$. Since $V\subseteq \overline{V}$, it follows that $(\overline{V})^\perp \subseteq (V)^\perp$. We now show that the other inclusion holds. 

Let $x \in (V)^\perp$, so that $\langle x,v\rangle = 0$ for all $v \in V$, and let $y \in \overline{V}$, so that there exists a sequence $y_n \in V$ converging in norm to $y$. By continuity of inner products (with one argument fixed), we have \[\langle x,y \rangle = \langle x,\lim_{n\to\infty} y_n \rangle = \lim_{n\to\infty} \langle x,y_n \rangle = \lim_{n\to\infty} 0 = 0.\] Hence $x\in (\overline{V})^\perp$, and so $(V)^\perp \subseteq (\overline{V})^\perp$. \\ \\
\ref{ran ker equality} Noting that $(I-T)^* = I-T^*$, we have 
\begin{equation*}
\begin{aligned}
x\in (\ran(I-T))^\perp &\iff \langle x,y \rangle = 0 \textnormal{ for all } y\in \ran(I-T) \\ 
&\iff \langle x,(I-T)w \rangle = 0 \textnormal{ for all } w\in H \\ 
&\iff \langle (I-T^*)x,w \rangle = 0 \textnormal{ for all } w \in H \\
&\iff (I-T^*)x = 0 \\
&\iff x\in \ker(I-T^*).
\end{aligned}
\end{equation*}
Hence $(\ran(I-T))^\perp = \ker(I-T^*)$. \\ \\
\ref{direct sum ran ker} Applying \ref{direct sum subspace} and \ref{ran ker equality}, we have
\begin{equation}
\nonumber H=\overline{\ran(I-T)} \oplus (\ran(I-T))^\perp = \overline{\ran(I-T)} \oplus \ker(I-T^*),
\end{equation}
as required.
\end{proof}

\newpage

\section{Motivation}

This dissertation focuses on proving the convergence results discussed in Section \ref{a brief history}. However, it is important to understand how these results interact with other areas of mathematics. We present three applications of the method of alternating projections beyond functional analysis. The first is a playful example in which we make use of von Neumann's theorem in the unexpected context of dividing a string into equal thirds. The other two highlight its use in finding iterative solutions to systems of linear equations and in the theory of partial differential equations.

A key theme throughout this section is that the usefulness of the method of alternating projections stems from it often being easier to compute projections onto a single closed subspace, rather than directly computing the projection onto the intersection of closed subspaces. It is also worth remarking that there are many more applications beyond the three we present in this section; see \cite{Deu92} for a survey.

\subsection{Dividing a string into equal thirds}

We begin with a charming demonstration of how von Neumann's theorem (to be proved later as Theorem \ref{von neumann}) can be applied to divide a string into equal thirds. This is due to Burkholder, and presented in the paper ``Stochastic Alternating Projections'' \cite{DiKhSa10}.

We take a string and attach two paperclips anywhere along it, calling these the `left' and `right' paperclips. We will present an iterative process, so that the positions of the paperclips will converge to one third and two thirds of the total length of the string.

At any given stage, we apply the following steps.
\begin{enumerate}[label=(\alph*)]
\item We fold over the right end of the string so that it touches the left paperclip, and slide the right paperclip until it reaches the loop. We then unfold the string.
\begin{figure}[H]
\begin{center}
\includegraphics[width=130mm]{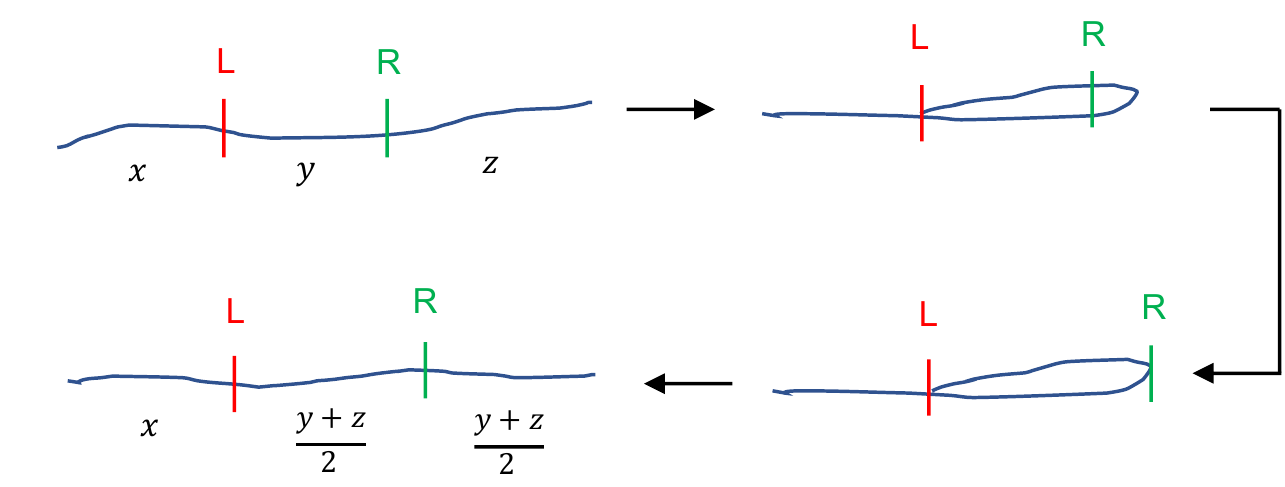}
\caption{Step (a) of the iteration}
\label{step a}
\end{center}
\end{figure}
\item This time, we fold over the left end of the string so that it touches the right paperclip, and slide the left paperclip until it reaches the loop. We then unfold the string.
\begin{figure}[H]
\begin{center}
\includegraphics[width=130mm]{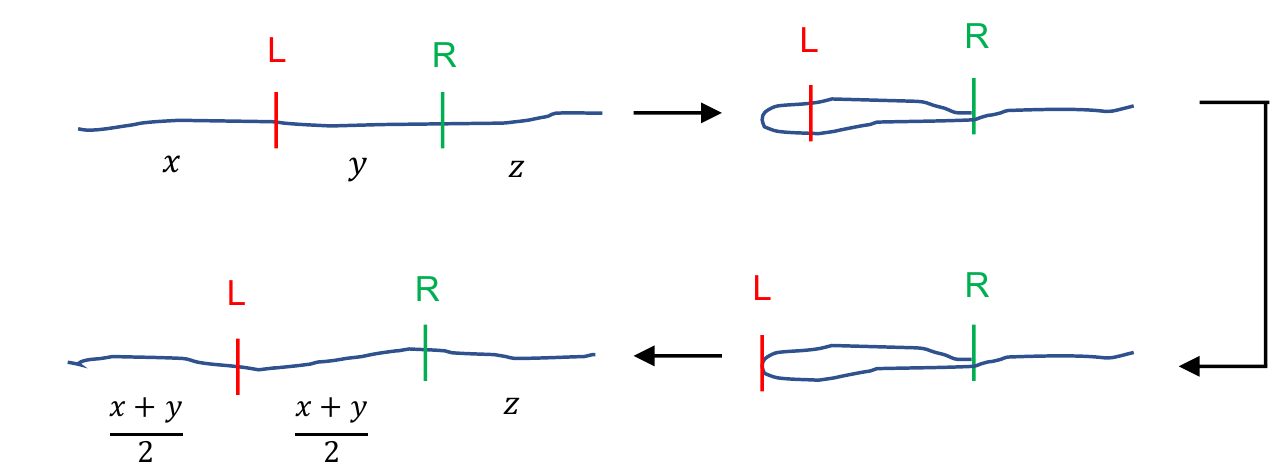}
\caption{Step (b) of the iteration}
\label{step b}
\end{center}
\end{figure}
\end{enumerate}
Applying (a) followed by (b) makes up one iteration. 
\begin{claim*}
Repeating these iterations, the positions of the paperclips converge to one third and and two thirds of the total length of the string. 
\end{claim*}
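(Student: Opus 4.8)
The plan is to recast the folding iteration as the method of alternating projections for two lines in $\mathbb{R}^2$ equipped with a carefully chosen inner product, and then quote von Neumann's theorem (Theorem~\ref{von neumann}).

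First I would normalise the string to have length $1$ and encode a configuration by the pair $(a,b)\in\mathbb{R}^2$, where $a$ is the distance from the left end to the left paperclip and $b$ the distance from the left end to the right paperclip, so that physically $0\le a\le b\le 1$. The relevant geometric fact is that when a fold brings two points into contact, the crease lies exactly at their midpoint. Hence step (a) brings the right end (at $1$) onto the left paperclip (at $a$), creating the loop at $\tfrac{1+a}{2}$ and moving the right paperclip there, i.e. $(a,b)\mapsto(a,\tfrac{1+a}{2})$; likewise step (b) gives $(a,b)\mapsto(\tfrac b2,b)$. Both maps are affine, and the image of the first is the line $\ell_1=\{b=\tfrac{1+a}{2}\}$ while the image of the second is $\ell_2=\{b=2a\}$; these two lines meet in the single point $p=(\tfrac13,\tfrac23)$, the candidate limit. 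I would also check along the way that an iteration preserves $0\le a\le b\le 1$, so that the algebraic model really describes the physical string at every stage.

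The subtlety is that, with respect to the standard inner product, these two affine maps are \emph{not} orthogonal projections. I would therefore equip $\mathbb{R}^2$ with the inner product $\langle u,v\rangle_W=u^{\mathsf T}Wv$, where $W=\begin{pmatrix}2&-1\\-1&2\end{pmatrix}$; since $W$ is symmetric and positive definite, $(\mathbb{R}^2,\langle\cdot,\cdot\rangle_W)$ is a finite-dimensional Hilbert space in which every subspace is closed. A short computation shows that for every $(a,b)$ the image $(a,\tfrac{1+a}{2})$ lies on $\ell_1$ and the displacement $(a,b)-(a,\tfrac{1+a}{2})$ is $W$-orthogonal to the direction of $\ell_1$; by the affine analogue of Lemma~\ref{foundational}\ref{projection is onto closest point} this says precisely that step (a) is the $W$-orthogonal projection $A$ onto the affine line $\ell_1$. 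The same check applied to step (b) identifies it with the $W$-orthogonal projection $B$ onto $\ell_2$. (The two orthogonality requirements in fact pin down $W$ up to a positive scalar, so the choice is forced rather than lucky.)

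Finally I would reduce to the linear setting: writing $M_i=\ell_i-p$ for the corresponding lines through the origin and $P_i$ for the $W$-orthogonal projection onto $M_i$, orthogonal projection onto the affine subspace $p+M_i$ is the conjugate by translation $x\mapsto p+P_i(x-p)$, so $Ax=p+P_1(x-p)$ and $Bx=p+P_2(x-p)$. Hence one iteration (step (a) then step (b)) sends $x\mapsto p+P_2P_1(x-p)$, and after $n$ iterations the configuration is $x_n=p+(P_2P_1)^n(x_0-p)$. By von Neumann's theorem, $(P_2P_1)^n z\to P_{M_1\cap M_2}z$ for every $z$; since the distinct lines $M_1$ and $M_2$ meet only at the origin, $P_{M_1\cap M_2}=0$, so $x_n\to p=(\tfrac13,\tfrac23)$. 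Translating back, the paperclips converge to one third and two thirds of the length of the string. I expect the only genuinely non-mechanical step to be the one in the third paragraph: recognising that the physical folds become orthogonal projections only after re-metrising $\mathbb{R}^2$, and exhibiting the matrix $W$ that makes this so; once that is in place the conclusion is a direct appeal to Theorem~\ref{von neumann} together with the elementary fact that orthogonal projection onto an affine subspace is a translate of orthogonal projection onto a linear one.
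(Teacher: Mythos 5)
Your argument is correct, but it takes a genuinely different route from the one in the paper. The paper changes coordinates to the three section lengths $(x,y,z)=(a,\,b-a,\,1-b)$: in those coordinates the two folding steps are given by symmetric idempotent matrices, hence are \emph{standard} orthogonal projections of $\mathbb{R}^3$ onto two planes whose intersection is the diagonal $\{(t,t,t)\}$, and von Neumann's theorem (Theorem \ref{von neumann}) immediately gives convergence to the average $\bigl(\tfrac{x+y+z}{3},\tfrac{x+y+z}{3},\tfrac{x+y+z}{3}\bigr)$. You instead stay in the two paperclip coordinates and re-metrise $\mathbb{R}^2$ so that the two affine folding maps become $W$-orthogonal projections onto affine lines, then conjugate by the translation to $p=(\tfrac13,\tfrac23)$ and apply Theorem \ref{von neumann} to the resulting linear projections. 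Both steps check out: your orthogonality computations are right, $W$ is positive definite, and the affine-to-linear reduction is exactly the one the paper itself uses later for the Kaczmarz method. It is worth noting that your $W$ is no accident: it equals $L^{\mathsf T}L$ for the linear part $L$ of the map $(a,b)\mapsto(a,\,b-a,\,1-b)$, so your inner product is precisely the pullback of the standard inner product on $\mathbb{R}^3$ under the paper's change of coordinates, and the two proofs are the same proof seen in different charts. What the paper's version buys is economy --- the maps are already linear and the intersection subspace is one-dimensional rather than zero-dimensional, so no affine machinery or search for $W$ is needed; what yours buys is that it works directly with the physically natural state (the paperclip positions) and makes explicit the general principle that an affine idempotent map with constant displacement direction is an orthogonal projection for a suitably chosen inner product.
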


Although there are simpler ways to prove this, it is interesting to see how von Neumann's theorem may be applied in this unexpected context to give a slick proof. 

\begin{proof} Suppose the three sections of the string have lengths $x$, $y$ and $z$ respectively. Then, as shown in Figures \ref{step a} and \ref{step b}, an application of (a) leaves the three sections with lengths $x$, $(y+z)/2$, $(y+z)/2$, and an application of (b) leaves the sections with lengths $(x+y)/2$, $(x+y)/2$, and $z$. Hence, applications of (a) and (b) correspond to the projections

\[ P_1 = 
\begin{pmatrix}
1 & 0 & 0 \\
0 & 1/2 & 1/2 \\
0 & 1/2 & 1/2
\end{pmatrix}, \quad P_2 = 
\begin{pmatrix}
1/2 & 1/2 & 0 \\
1/2 & 1/2 & 0 \\
0 & 0 & 1
\end{pmatrix},\]
applied to the vector $(x,y,z)^T$.

We work in the Hilbert space $H = \mathbb{R}^3$, and let $M_1$ and $M_2$ be the closed subspaces $P_1(H)$ and $P_2(H)$ respectively. Since $P_1$ and $P_2$ are idempotent and self-adjoint, they are in fact orthogonal projections.

Let $w \in H$. For $j \in \{1,2\}$, we have  \[ w \in M_j \iff P_jw = w, \] and so \[ w \in M_1\cap M_2 \iff P_1w = P_2w = w.\] It is a simple check to see that $P_1w = P_2w = w$ if and only if $w = (a,a,a)^T$ for some $a \in \mathbb{R}$, and so, \[M_1\cap M_2 = \big{\{} (a,a,a)^T : a \in \mathbb{R} \big{\}}. \]
Hence, we have \[P_{M_1\cap M_2} \begin{pmatrix} 
x \\
y \\
z
\end{pmatrix} = \begin{pmatrix} 
\frac{x+y+z}{3} \\
\frac{x+y+z}{3} \\
\frac{x+y+z}{3}
\end{pmatrix}.  \] 
By von Neumann's theorem, which states that the limit of alternating projections onto two closed subspaces converges in norm to the projection onto the intersection of these subspaces, we have \[ \Bigg{\|}(P_1P_2)^n\begin{pmatrix} 
x \\
y \\
z
\end{pmatrix} - \begin{pmatrix} 
\frac{x+y+z}{3} \\
\frac{x+y+z}{3} \\
\frac{x+y+z}{3}
\end{pmatrix} \Bigg{\|} \to 0 \textnormal{ as $n \to \infty$}. \] Hence the positions of the paperclips converge to one third and two thirds of the total length of string, as claimed.
\end{proof}

We may also be interested in how quickly the position of the paperclips converge to one third and two thirds of the total length of the string. It turns out (after some simple calculations, which we omit here) that for a string of length $c$, the deviation of the left paperclip from $c/3$ and the right from $2c/3$, after $n$ iterations, is at most $\frac{2c}{3} \cdot 4^{-n}$ and $\frac{c}{3} \cdot 4^{1-n}$ respectively. To put this into perspective, for a string $1$ metre in length, only $3$ iterations are needed for an error of less than $1.1$ centimetres for the left paperclip.

\subsection{Solving systems of linear equations}

As mentioned in Section \ref{a brief history}, Halperin proved that the sequence obtained by periodically projecting an element of a Hilbert space orthogonally onto a collection of closed subspaces converges in norm to the projection of the element onto the intersection of these closed subspaces \cite{Hal62}. Inspired by Deutsch \cite{Deu01}, we demonstrate how Halperin's theorem (to be proved later as Theorem \ref{halperin}) can be used to find an iterative solution to a system of linear equations.

\subsubsection{The setup}

Let $H$ be a real or complex Hilbert space, $\{y_1,\dots,y_J\} \in H \setminus \{0\}$, and $\{c_1,\dots,c_J\} \in \mathbb{F}$. We want to find an element $x \in H$ satisfying the equations
\begin{equation} \label{satisfy equation}
\langle x,y_i \rangle = c_i, \quad i\in \{1,\dots, J\}. 
\end{equation}
We consider the hyperplanes
\begin{equation*}
V_i = \{y \in H \mid \langle y,y_i \rangle = c_i \}, \quad i\in \{1,\dots, J\},
\end{equation*}
and note that they are closed. We set
\begin{equation*}
V = \bigcap_{i=1}^J V_i.
\end{equation*}
Then $x$ satisfies (\ref{satisfy equation}) if and only if $x \in V$. Throughout this section, we assume a solution exists, so that $V \neq \emptyset$.

At this stage, we would like to take periodic projections of a vector $x_0 \in H$ onto the $J$ hyperplanes, and show it converges in norm to an element of $V$ (i.e. to a solution of (\ref{satisfy equation})). It seems natural to appeal to Halperin's theorem to obtain such a result. However, the hyperplanes $V_i$ may not be subspaces since they do not necessarily contain the origin.

\subsubsection{An interlude about affine spaces}

We can resolve this problem through the notion of affine spaces. There are many equivalent definitions of affine spaces, but we will use the one which is most natural in this context. 

We say that $U \subset H$ is an \textit{affine space} if $U=L+u$ for some (unique) subspace $L$ of $H$, and (any) $u \in U$. We may define a projection of $z \in H$ onto an affine space $U$ by \[P_U(z) = P_L(z - u) + u.\] This is well defined since given $u,\tilde{u} \in U$, there is some $l \in L$ such that $u = l+\tilde{u}$, and so \[P_L(u-\tilde{u}) = P_L(l) = l = u - \tilde{u}.\] Therefore by linearity of $P_L$, we have that for any $z \in H$,
\begin{equation*}
P_L(z-u) + u = P_L(z - \tilde{u}) + \tilde{u}.
\end{equation*}
For each $i \in \{1,\dots,J\}$, we let $M_i$ be the subspace given by 
\begin{equation*}
M_i = \{y \in M \mid \langle y,y_i \rangle = 0 \}. 
\end{equation*}
Then for any $i \in \{1,\dots,J\}$, we have
\begin{equation*}
V_i = M_i + v_i, \quad v_i \in V_i,
\end{equation*}
and hence each $V_i$ is an affine space. Let $M = \bigcap_{i=1}^J M_j$. It is a simple check that we have
\begin{equation*}
V = M + v, \quad v \in V.
\end{equation*}
and so $V$ is also an affine space. Therefore for any $v \in V$, we have
\begin{equation*}
\begin{aligned}
&V_i = M_i + v, \\
& V = M + v.
\end{aligned}
\end{equation*}

\subsubsection{Finding an iterative solution}

We are now in a position to be able to make use of Halperin's theorem. We begin by choosing a starting vector $x_0 \in H$, and fixing some $v \in V$.  Then for any $i,j \in \{1,\dots,J\}$, we have
\begin{equation} \label{composition of v}
\begin{aligned}
P_{V_j}P_{V_i}x_0 &= P_{V_j}(P_{M_i}(x_0-v) + v) = P_{M_j}P_{M_i}(x_0-v) + v. \\
\end{aligned}
\end{equation}
Therefore, letting $T = P_{V_J}\dots P_{V_1}$ and applying (\ref{composition of v}) repeatedly gives \[T^n x_0 = v + (P_{M_J}\dots P_{M_1})^n (x_0-v), \quad n\in \mathbb{N}.\] Hence by Halperin's Theorem,
\begin{equation*}
\|T^nx_0 - P_Vx_0\| = \|(P_{M_J}\dots P_{M_1})^n(x_0-v) - P_M(x_0 - v)\| \to 0 \textnormal{ as } n\to \infty.
\end{equation*}
In particular, since $P_Vx_0 \in V$, we see that $T^n x_0$ converges in norm to a solution of (\ref{satisfy equation}). 

By the Hilbert projection theorem, there is a unique $\tilde{v} \in V$ such that $\|x_0-\tilde{v}\|$ is minimised over $V$. We show that this unique $\tilde{v}$ is in fact $P_Vx_0$. For any $w \in V$, Lemma \ref{foundational}\ref{projection is onto closest point} gives
\begin{align*}
\|x_0-P_Vx_0\| &= \|(x_0 - w) - P_M(x_0-w)\| \leq \|(x_0-w) \|.
\end{align*}
Since $w \in V$ was arbitrary, then this unique $\tilde{v}$ is indeed $P_Vx_0$. Hence, setting $x_0=0$, we have that $T^n 0$ converges in norm to the unique minimal norm solution of (\ref{satisfy equation}). 

It is a simple check that for each $z \in H$,
\begin{equation} \label{project hyperplane}
P_{V_i}(z) = z - \frac{y_i\big{(}\langle z,y_i \rangle - c_i\big{)}}{\|y_i\|^2}, \quad i \in \{1,\dots, J\}.
\end{equation}
Thus we have a formula to easily calculate $P_{V_i}(z)$ for any $z \in H$.

A special case of particular interest is when $H = \mathbb{R}^N$ ($N \in \mathbb{N}$), where the inner product is taken to be the dot product.  Writing $x = (x_1,\dots, x_N)$ and $y_i = (a_{i1},\dots, a_{iN})$ for $i \in \{1\dots J\}$, equation (\ref{satisfy equation}) may be rewritten as
\begin{equation*}
\sum_{j=1}^N a_{ij}x_j = c_i, \quad i \in \{1,\dots, J\},
\end{equation*}
a system of linear equations. Assuming a solution exists, we have that $T^n x_0$ converges in norm to the unique solution closest in norm to our initial `guess' $x_0$. 

This is called the Kaczmarz Method, first suggested in 1937 \cite{Kac37} (see \cite{Kac93} for an English translation). Its practical value stems from our being able to easily project onto a hyperplane by using (\ref{project hyperplane}). It has a computational advantage over other known methods of solving systems of linear equations if the system is sparse. In particular, when the matrix $A=[a_{ij}]$ is sparse, the computation of $P_{V_i}(z)$ is very fast \cite{Deu92}.

\subsection{Solving PDEs on composite domains}
The final application we present is known as the Schwarz alternating method. It allows us to find an iterative solution of an elliptic partial differential equation on a region made up of two overlapping regions, in which the partial differential equation is easy to solve. We present this method for the Dirichlet problem; further examples can be found in \cite{Lio88}.

We consider the Sobolev space $H = H_0^1(\Omega)$, where the domain $\Omega = \Omega_1 \cup \Omega_2 \subset \mathbb{R}^2$ is a union of two sufficiently smooth subdomains (for example, if the domains are locally the graph of a Lipschitz continuous function). We view $H$ as a Hilbert space with inner product given by
\begin{equation*}
\langle u,v \rangle_H = \langle \nabla u, \nabla v \rangle_{L^2(\Omega)} = \int_\Omega \nabla u \cdot \overline{ \nabla v} \,dx.
\end{equation*}  
Let $\Gamma=\partial \Omega$, and for $k\in\{1,2\}$, let $\Gamma_k = \partial \Omega_k \cap \partial \Omega$ and $\gamma_k = \partial \Omega_k \setminus \partial \Omega$. 
\begin{figure}[H]
\begin{center}
\includegraphics[width=90.5mm]{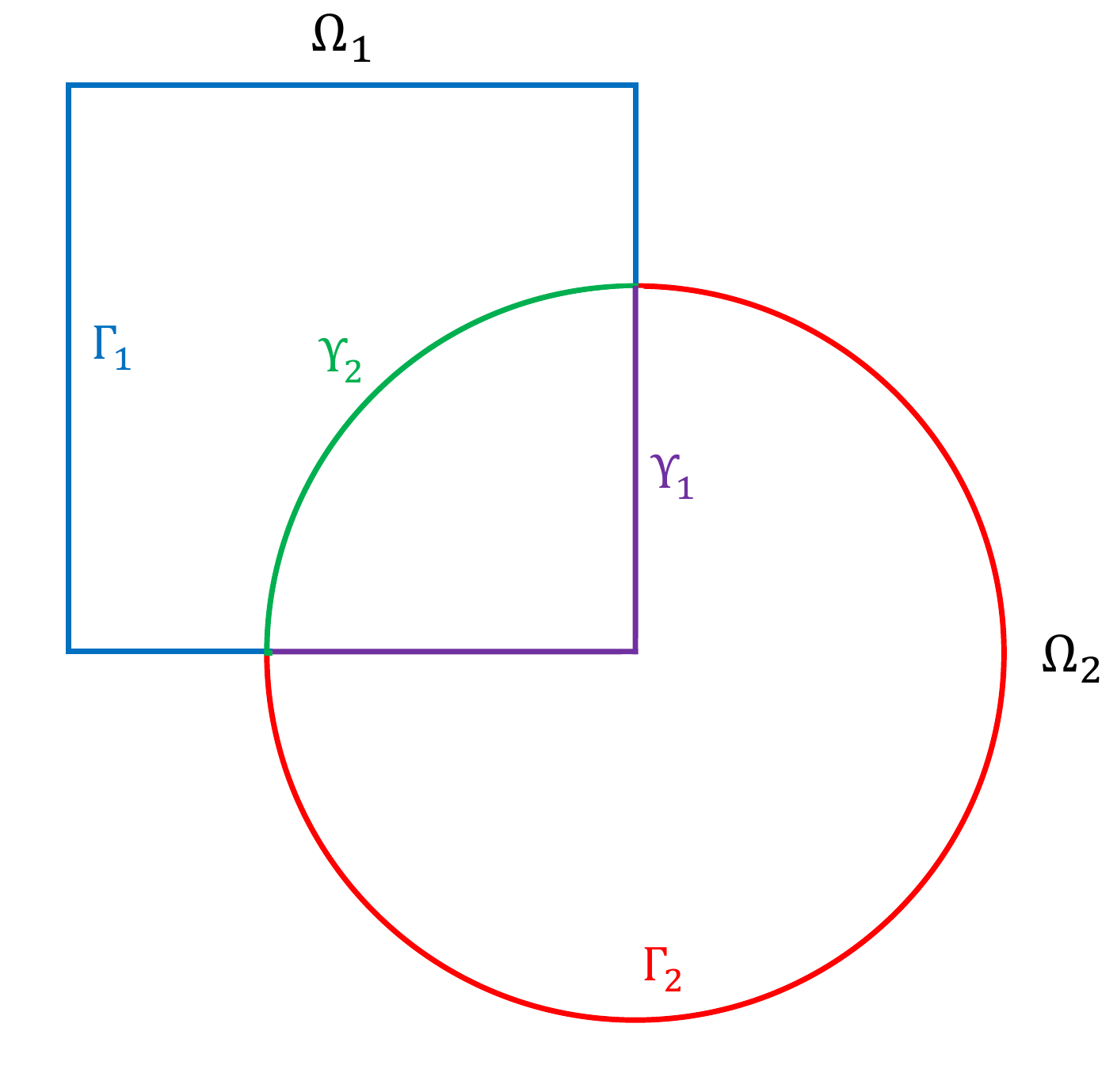}
\caption{An illustration of the domain $\Omega = \Omega_1 \cup \Omega_2$}
\label{Dirichlet}
\end{center}
\end{figure}
For $f \in L^2(\Omega)$, we would like to find a weak solution to the Dirichlet problem
\begin{equation} \label{dirichlet union}
\left\{\begin{alignedat}{2}
-\Delta &u = f && \quad \text{in}\ \Omega, \\
&u=0 && \quad  \text{on}\ \Gamma. \\
\end{alignedat}\right.
\end{equation}
Finding a weak solution means finding $u \in H$ such that we have
\begin{equation*}
\langle f,v \rangle_{L^2(\Omega)} = \langle\nabla u, \nabla v \rangle_{L^2(\Omega)}, \quad v\in H.
\end{equation*}
The motivation behind the definition of a weak solution is that for test functions $u,v \in C_c^\infty(\Omega)$, with $u$ satisfying (\ref{dirichlet union}), integration by parts gives
\begin{equation*}
\begin{aligned}
\langle f,v \rangle_{L^2(\Omega)}  = \int_\Omega f\overline{v} \,dx = \int_\Omega -\Delta u \cdot \overline{v} \,dx =  \int_\Omega \nabla u \cdot \overline{\nabla v} \,dx = \langle\nabla u, \nabla v \rangle_{L^2(\Omega)}.
\end{aligned}
\end{equation*}
Noting that $v \mapsto \langle f,v \rangle_{L^2(\Omega)}$ is a (conjugate) linear bounded functional on $H$, the Riesz representation theorem gives that there exists a unique $u \in H$ such that
\begin{equation*}
\langle f,v \rangle_{L^2(\Omega)} = \langle\nabla u, \nabla v \rangle_{L^2(\Omega)}, \quad v \in H.
\end{equation*}
That is to say, there is a unique weak solution of (\ref{dirichlet union}). In what follows, we will use von Neumann's theorem \cite{von49} to find a sequence converging in norm to the weak solution of (\ref{dirichlet union}). 

We begin by fixing $u_0 \in H$. We obtain $u_1 \in H$ by first finding a weak solution of
\begin{equation} \label{dirichlet single}
\left\{\begin{alignedat}{2}
-\Delta &u_1 = f && \quad \text{in}\ \Omega_1, \\
&u_1=0 && \quad  \text{on}\ \Gamma_1, \\
&u_1=u_0 && \quad  \text{on}\ \gamma_1, \\
\end{alignedat}\right.
\end{equation}
and then extending $u_1$ from $\Omega_1$ to $\Omega$ by letting $u_1=u_0$ on $\Omega_2 \setminus \Omega_1$. We note that finding a weak solution of (\ref{dirichlet single}) means finding $u_1 \in H^1(\Omega_1)$ with $u_1 = 0$ on $\Gamma_1$, and $u_1 = u_0$ on $\gamma_1$, such that
\begin{equation*}
\langle f,v \rangle_{L^2(\Omega_1)} = \langle\nabla u_1, \nabla v \rangle_{L^2(\Omega_1)}, \quad v \in H_0^1(\Omega_1).
\end{equation*}
The Riesz representation theorem again gives that there is a unique such $u_1$.

We then define $u_2 \in H$ by solving an analogous problem on $\Omega_2$, with $u_0$ replaced by $u_1$. Continuing in this way, we generate a sequence $(u_n)_{n\geq0}$ in $H$. We will show that $u_n$ converges in norm to $u$, the unique weak solution.

For $k\in\{1,2\}$, let $Y_k = H_0^1(\Omega_k)$, viewed as a closed subspace of $H$ after extending functions defined on $\Omega_k$ by zero to all of $\Omega$. For $k \in \{1,2\}$, we let $M_k = Y_k^\perp$, and $P_k$ be the orthogonal projection onto $M_k$. We also write $M = M_1 \cap M_2$.

Since $u$ and $u_1$ are both weak solutions of the Dirichlet problem on $\Omega_1$, we have that for every $v \in Y_1$,
\begin{equation*}
\begin{aligned}
\langle u-u_1,v \rangle_{H}&= \langle u-u_1,v \rangle_{Y_1} \\
&=\langle \nabla (u-u_1), \nabla v \rangle_{L^2(\Omega_1)} \\
&=\langle \nabla u, \nabla v \rangle_{L^2(\Omega_1)} - \langle \nabla u_1, \nabla v \rangle_{L^2(\Omega_1)} \\
&= \langle f,v \rangle_{L^2(\Omega_1)} - \langle f,v \rangle_{L^2(\Omega_1)} = 0.
\end{aligned}
\end{equation*}
Therefore $u-u_1 \in M_1$. We also note that $u_1-u_0 \in Y_1 = M_1^\perp$. Hence we have
\begin{equation*}
u-u_0 = \underbrace{(u-u_1)}_{\in M_1} + \underbrace{(u_1-u_0)}_{\in M_1^\perp},
\end{equation*}
and so $P_1(u-u_0) = u-u_1$. Similarly, we see that $P_2(u-u_1)= u-u_2$, and so on.

More generally, defining $x_n \in H$ by $x_n = u - u_{n}$ for $n \geq 1$, we have that $x_{2n+2}= P_2P_1 x_{2n}$, and so
\begin{equation*}
x_{2n} = (P_2P_1)^n x_0, \quad n\geq 1.
\end{equation*}
By von Neumann's theorem, we have
\begin{gather*}
\| x_{2n} - P_Mx_0\| \to 0, \\
\|x_{2n+1} - P_Mx_0\| = \|P_1(x_{2n} - P_Mx_0)\| \leq \| x_{2n} - P_Mx_0\| \to 0,
\end{gather*}
as $n \to \infty$, and therefore
\begin{equation*}
\|x_{n} - P_Mx_0\| \to 0 \textnormal{ as } n\to \infty.
\end{equation*}
Since $Y_1^\perp \cap Y_2^\perp = (Y_1+Y_2)^\perp$ (generally true for subspaces of a Hilbert space), and since the space $Y=Y_1 + Y_2$ can be shown to be dense in $H$, we have that $M=Y^\perp = \{0\}$. Hence $x_n \to 0$ as $n \to \infty$, and so \begin{equation*}
\|u_n - u\| \to 0 \textnormal{ as } n\to \infty.
\end{equation*}
So we have generated a sequence $u_n \in H$ converging in norm to the unique weak solution. In fact, it turns out that $u_n$ converges in norm to the weak solution exponentially fast (although this is not guaranteed if, for example, we were to switch to Neumann boundary conditions); see \cite{Lio88} for more detail.

Given Halperin's theorem, it is not surprising that we may extend this method to more than two subdomains. This extension is, again, discussed in \cite{Lio88}.

We end this section by remarking how the method of alternating projections was applied in very different ways in the examples above. While solving systems of linear equations, we used it to find an element in the intersection of closed affine subspaces. In contrast, for the Schwartz alternating method, we knew that the intersection of our subspaces was $\{0\}$, but we applied it to a sequence with terms we did not know. This highlights how versatile the method of alternating projections is, and is another reason why it has so many applications.

\newpage

\section{Convergence in norm} \label{convergence in norm}

In this section, we work through the major results that give conditions for $(x_n)$ to converge in norm, including those by von Neumann ($J=2$), Halperin (periodic projections), and Sakai (quasiperiodic projections).

\subsection{Two closed subspaces}

We will begin by proving von Neumann's theorem, that for a sequence of projections onto two closed subspaces, we are guaranteed convergence in norm \cite{von49}. 
\begin{theorem} [von Neumann] \label{von neumann}
Let $P_1,P_2$ be orthogonal projections onto the closed subspaces $M_1,M_2$ of the real or complex Hilbert space $H$, and $P_M$ the orthogonal projection onto $M=M_1\cap M_2$. Then for any $x \in H$, \[\textnormal{$\|(P_2P_1)^n x -P_Mx\| \to 0 $ as $n \to \infty$.}\]
\end{theorem}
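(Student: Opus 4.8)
The plan is to travel the route that Lemma \ref{foundational 2} has prepared. Set $T = P_2 P_1$, so that $T^* = P_1 P_2$ and $T$ is a contraction, being a composition of projections. By Lemma \ref{foundational 2}\ref{direct sum ran ker} together with Lemma \ref{foundational 2}\ref{T*x=x} (which identifies $\ker(I-T^*)$ with $M$), we have the \emph{orthogonal} decomposition $H = \overline{\ran(I-T)} \oplus M$. Given $x \in H$, I would write $x = u + v$ with $u \in \overline{\ran(I-T)}$ and $v \in M$; since this decomposition is orthogonal, $x - v = u \perp M$ with $v \in M$, so $P_M x = v$. By Lemma \ref{foundational 2}\ref{Tx=x} we have $Tv = v$, hence $T^n v = v$ for all $n$. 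By linearity $T^n x = T^n u + T^n v = T^n u + P_M x$, so the whole theorem reduces to showing that $T^n u \to 0$.

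For this I would invoke Lemma \ref{foundational 2}\ref{using kakutani} with $A = T$: since $u \in \overline{\ran(I-T)}$, it is enough to verify the hypothesis that $\|T^{n+1} y - T^n y\| \to 0$ for \emph{every} $y \in H$. This is the heart of the argument, and the only genuine obstacle. Fix $y$ and put $y_n = T^n y$. As $T$ is a contraction, $(\|y_n\|)$ is non-increasing and bounded below, so it converges to some $\ell \geq 0$. By Lemma \ref{foundational}\ref{projection norm}, $\|y_n\| \geq \|P_1 y_n\| \geq \|P_2 P_1 y_n\| = \|y_{n+1}\|$, and since the outer terms both tend to $\ell$, so does $\|P_1 y_n\|$. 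Applying Lemma \ref{foundational}\ref{projection equality} twice,
\begin{equation*}
\|y_n - P_1 y_n\|^2 = \|y_n\|^2 - \|P_1 y_n\|^2 \to 0, \qquad \|P_1 y_n - P_2 P_1 y_n\|^2 = \|P_1 y_n\|^2 - \|P_2 P_1 y_n\|^2 \to 0,
\end{equation*}
and then the triangle inequality gives $\|y_{n+1} - y_n\| \leq \|P_2 P_1 y_n - P_1 y_n\| + \|P_1 y_n - y_n\| \to 0$, as required.

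With the hypothesis of Lemma \ref{foundational 2}\ref{using kakutani} in hand, we conclude $T^n u \to 0$, and therefore $T^n x = T^n u + P_M x \to P_M x$, which is exactly the claim. To summarise the difficulty: everything is routine bookkeeping with the orthogonal decomposition $H = \overline{\ran(I-T)} \oplus M$ except the step $\|T^{n+1} y - T^n y\| \to 0$; the trick there is to observe that $(\|T^n y\|)$ is monotone, and then to let Lemma \ref{foundational}\ref{projection equality} convert the collapse of the successive norms into the collapse of the successive differences. An alternative avoiding Lemma \ref{foundational 2}\ref{using kakutani} would be to study the positive self-adjoint contraction $P_1 P_2 P_1$ through the spectral theorem, isolating the eigenvalue $1$; but the route above is shorter and stays inside the elementary framework already established.
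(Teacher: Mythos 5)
Your proof is correct, but it takes a genuinely different route from the one in the paper. You work directly with the (non-self-adjoint) operator $T = P_2P_1$ and establish the key hypothesis $\|T^{n+1}y - T^ny\| \to 0$ by the elementary squeeze argument: $\|T^ny\|$ is monotone, hence $\|P_1 T^n y\|$ is squeezed to the same limit, and Lemma \ref{foundational}\ref{projection equality} converts the collapse of successive norms into the collapse of successive differences. This is precisely Kakutani's lemma (Lemma \ref{kakutani}) specialised to $J=2$, so your argument is in effect the Netyanun--Solomon proof of Halperin's theorem restricted to two projections; indeed the paper remarks after Lemma \ref{kakutani} that its von Neumann proof ``essentially proved Kakutani's lemma for the special case $J=2$''. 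The paper instead deliberately passes to the self-adjoint operator $P_1P_2P_1$ and invokes the spectral theorem, representing it as multiplication by a function $m$ with $0 \le m \le 1$ a.e.\ and using dominated convergence on the set $\{m<1\}$ to obtain the same estimate $\|T^nx - T^{n+1}x\| \to 0$; both proofs then finish identically via Lemma \ref{foundational 2}\ref{using kakutani} and the decomposition $H = \overline{\ran(I-T)} \oplus M$ from Lemma \ref{foundational 2}\ref{T*x=x} and \ref{foundational 2}\ref{direct sum ran ker}. Your route is shorter and entirely elementary, and it generalises immediately to $J$ subspaces (which is how the paper proves Halperin's theorem); the paper's spectral route is heavier machinery for this statement, but it exposes the spectral structure of $P_1P_2P_1$ near the eigenvalue $1$, which is what governs the rate of convergence discussed in the concluding remarks. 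One small point of care: Lemma \ref{foundational 2}\ref{using kakutani} requires the hypothesis for \emph{every} $y \in H$, which you correctly verify, so there is no gap.
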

Rather than follow von Neumann's proof, we present one which appears to not yet feature in the literature. Our proof is inspired by \cite{BaDeHu09}, where the following version of the spectral theorem is used in a similar context.
\begin{theorem*}[Spectral theorem] \label{spectral}
Let $H$ be a real or complex Hilbert space, and $T \in B(H)$ be a self-adjoint linear operator. Then there exists a measure space $(\Omega , \Sigma , \mu )$, a unitary map $U \colon H \to L^2(\Omega,\mu)$, and $m\in L^\infty(\Omega,\mu)$ with the property that $m(t) \in \mathbb{R}$ for almost all $t\in \Omega$
\begin{equation*}
UTU^{-1}f = m \cdot f, \quad f \in L^2(\Omega,\mu),
\end{equation*}
where $(m \cdot f)(t) = m(t)f(t)$ for $t \in \Omega$. Here, $\|m\|_\infty = \|T\|$.
\end{theorem*}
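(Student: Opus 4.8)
The plan is to prove the multiplication-operator form of the spectral theorem by first treating the case in which $T$ admits a single cyclic vector, and then assembling the general statement from an orthogonal decomposition of $H$ into cyclic subspaces. Since the statement permits a real Hilbert space but the functional calculus is cleanest over $\mathbb{C}$, I would first reduce to the complex case by complexifying, replacing $H$ by $H \oplus iH$ and extending $T$ to a bounded self-adjoint operator $T_{\mathbb{C}}$. Because $T$ is self-adjoint its spectrum $\sigma(T)$ is contained in $\mathbb{R}$, so the multiplier $m$ produced below is forced to be real-valued, and the construction can be arranged to descend to the original real structure.

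First I would build the continuous functional calculus. For a polynomial $p$ one defines $p(T)$ in the obvious way, and the decisive identity is $\|p(T)\| = \sup_{\lambda \in \sigma(T)} |p(\lambda)|$. This comes from $\|p(T)\|^2 = \|p(T)^*p(T)\| = \|(\overline{p}p)(T)\|$ together with the fact that for a self-adjoint operator the norm equals the spectral radius, plus the spectral mapping theorem for polynomials. As polynomials are dense in $C(\sigma(T))$ by Stone--Weierstrass, this isometry extends $p \mapsto p(T)$ to an isometric $*$-homomorphism $\Phi\colon C(\sigma(T)) \to B(H)$, written $f(T):=\Phi(f)$. Next, for $x \in H$ the positive bounded functional $f \mapsto \langle f(T)x,x\rangle$ is represented, via the Riesz representation theorem, by a finite positive Borel measure $\mu_x$ on $\sigma(T)$ with $\langle f(T)x,x\rangle = \int f\,d\mu_x$. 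If $x$ is \emph{cyclic}, meaning $\{f(T)x : f \in C(\sigma(T))\}$ is dense in $H$, then $f(T)x \mapsto f$ extends, by the isometry $\|f(T)x\|^2 = \int |f|^2\,d\mu_x$, to a unitary $U\colon H \to L^2(\sigma(T),\mu_x)$ conjugating $T$ to multiplication by the identity function $t \mapsto t$, so that $m(t)=t$.

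For the general case I would invoke Zorn's lemma to write $H$ as an orthogonal direct sum $\bigoplus_\alpha H_\alpha$ of $T$-invariant subspaces, each carrying a cyclic vector $x_\alpha$. Applying the cyclic model to each summand and forming the disjoint union $\Omega = \bigsqcup_\alpha \sigma(T)$, with $\mu$ assembling the $\mu_{x_\alpha}$ and $m$ equal to the coordinate $t$ on each copy, gives $L^2(\Omega,\mu) \cong \bigoplus_\alpha L^2(\sigma(T),\mu_{x_\alpha})$ and a unitary $U$ sending $T$ to multiplication by $m$. Since $m$ takes values in $\sigma(T) \subseteq \mathbb{R}$ it is real-valued almost everywhere, and $\|m\|_\infty = \sup_{\lambda \in \sigma(T)} |\lambda| = \|T\|$, the final equality being once more the norm--spectral-radius identity for self-adjoint operators.

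The main obstacle is twofold. The first delicate point is the functional-calculus isometry $\|p(T)\| = \sup_{\sigma(T)}|p|$, which rests squarely on the spectral-radius formula and on self-adjointness; making this precise is what supports the entire construction. The second is the bookkeeping in the cyclic decomposition: checking that a maximal orthogonal family of cyclic subspaces, supplied by Zorn's lemma, genuinely exhausts $H$, and that the disjoint-union measure space and the multiplier glue into a single $L^2(\Omega,\mu)$ on which $T$ is unitarily equivalent to multiplication by $m$. The real-scalar reduction is routine but should be flagged explicitly, since the functional calculus above is most naturally phrased over $\mathbb{C}$.
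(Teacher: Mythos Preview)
The paper does not prove this statement. The spectral theorem is quoted there as a standard result from functional analysis and used as a black box in the proof of von Neumann's theorem (Theorem~\ref{von neumann}); no argument for it is given or even sketched. So there is no ``paper's own proof'' to compare against.

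That said, your outline is the standard textbook route to the multiplication-operator form: build the continuous functional calculus via the isometry $\|p(T)\| = \sup_{\sigma(T)}|p|$, use Riesz--Markov to produce the spectral measure $\mu_x$ for a cyclic vector, and then Zorn over cyclic subspaces to handle the general case. The sketch is essentially correct. Two points are worth tightening if you actually write this up. First, the real-scalar case is more delicate than you indicate: complexifying and then ``descending to the original real structure'' requires checking that the unitary $U$ and the measure space can be chosen compatibly with complex conjugation, which is not automatic from the construction you describe. Second, for the equality $\|m\|_\infty = \|T\|$ you want to be slightly careful that $\|m\|_\infty$ here means the \emph{essential} supremum with respect to $\mu$, and that the support of $\mu$ genuinely fills out $\sigma(T)$; this follows from the cyclic-vector construction but deserves a sentence.
\begin{equation*}
\end{equation*}
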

The idea is to consider $(P_1P_2P_1)^n$ rather than $(P_2P_1)^n$. The operator $P_1P_2P_1$ being self-adjoint allows us to apply the spectral theorem to shift the problem into some $L^2(\Omega,\mu)$, where we may make use of tools such as the dominated convergence theorem.

\begin{proof}[Proof of Theorem \ref{von neumann}] Let $T=P_1P_2P_1$. Since $P_1$ and $P_2$ are idempotent, we see that $(P_2P_1)^nx$ converges in norm to $P_Mx$ if and only if $T^nx$ does. We will prove the latter.

Since $T$ is self-adjoint, the spectral theorem gives that there exists a measure space $(\Omega , \Sigma , \mu )$, a unitary map $U: H \to L^2(\Omega,\mu)$, and $m\in L^\infty(\Omega,\mu)$ with $m(t) \in \mathbb{R}$ for almost all $t\in \Omega$, such that
\begin{equation*}
UTU^{-1}f = m \cdot f, \quad f \in L^2(\Omega,\mu).
\end{equation*}
We note that for $f \in L^2(\Omega,\mu)$, we have $m\cdot f \in L^2(\Omega,\mu)$ and also
\begin{equation*}
UT^nU^{-1}f = m^n \cdot f,\quad n\geq 0.
\end{equation*}
Let $x \in H$, and consider $f=Ux$. Noting that $UT(x) = m \cdot f$, we have 
\begin{equation*}
\begin{aligned}
m\|f\|^2 &= \langle m \cdot f,f \rangle = \langle UTx,Ux \rangle = \langle U^*UTx,x \rangle = \langle Tx,x \rangle = \langle P_1P_2P_1x,x \rangle \\ 
& = \langle P_2P_1x,P_1x \rangle = \langle P_2^2P_1x,P_1x \rangle = \langle P_2P_1x,P_2P_1x \rangle = \|P_1P_2x\|^2 \geq 0.
\end{aligned}
\end{equation*}
Hence we have $m(t) \geq 0$ for almost all $t\in \Omega$. Since $\|m\|_\infty = \|T\| \leq 1$, then $m(t) \leq 1$ for almost all $t \in \Omega$.

So for almost all $t \in \Omega$, we have $0\leq m(t) \leq 1$. Hence, defining
\begin{equation*}
\begin{alignedat}{2}
&\widetilde{\Omega} &&= \{ t \in \Omega: m(t)\leq 1\}, \\ 
&\Omega' &&= \{t \in \Omega : m(t)<1\}, \\
&\Omega^* &&= \{t \in \Omega : m(t)=1\},
\end{alignedat}
\end{equation*}
we have that $\Omega \setminus \widetilde{\Omega}$ has zero measure. Additionally, noting that $  \Omega' \cap \Omega^* = \emptyset$, $\widetilde{\Omega} =  \Omega' \cup \Omega^*$, and $1-m= 0$ on $\Omega^*$, we have

\begin{equation} \label{dct inequality}
\begin{aligned}
\Big{(}\|(m^n - m^{n+1})\cdot f\|_{L^2(\Omega,\mu)}\Big{)}^2 &= \int_{\Omega} |m^n(1-m)\cdot f|^2 \,d\mu \\
&= \int_{\widetilde{\Omega}} |m^n(1-m)\cdot f|^2 \,d\mu \\
&= \int_{\Omega'} |m^n(1-m)\cdot f|^2 \,d\mu \\
&\qquad +  \int_{\Omega^*} |m^n(1-m)\cdot f|^2 \,d\mu \\
&= \int_{\Omega'} |m^n(1-m) \cdot f|^2 \,d\mu \\
& \leq \int_{\Omega'} |m^n \cdot f|^2 \,d\mu.
\end{aligned}
\end{equation}
We note that $(m(t))^n \to 0$ as $n\to \infty$ for $t \in \Omega'$. We also note that $f$ is integrable, and so is finite almost everywhere on $\Omega$, and therefore on $\Omega'$. Hence $(m(t))^n f(t) \to 0$ as $n \to \infty$ for $t \in \Omega'$. 
Since $|m^n \cdot f| \leq |f|$ on $\Omega'$, and $f$ is integrable on $\Omega'$, we may apply the dominated convergence theorem to get
\begin{equation} \label{dct}
\lim_{n\to\infty} \int_{\Omega'} |m^n \cdot f|^2 d\mu = \int_{\Omega'} \lim_{n\to\infty} |m^n \cdot f|^2 d\mu =  \int_{\Omega'} 0 \textnormal{ } d\mu = 0.
\end{equation}
We note that $\|U^{-1}\| = 1$ (since $U$ is unitary), and that $U(T^n - T^{n+1})U^{-1}f = (m^n - m^{n+1}) \cdot f$. Applying these, along with (\ref{dct inequality}) and (\ref{dct}), gives
\begin{equation} \label{kakutani inequality}
\begin{aligned}
\|T^nx - T^{n+1}x\| &= \|U^{-1}U(T^n - T^{n+1})U^{-1}Ux\| \\
& \leq \|U^{-1}\| \cdot \|U(T^n - T^{n+1})U^{-1}f\|_{L^2(\Omega,\mu)} \\
&= \|U(T^n - T^{n+1})U^{-1}f\|_{L^2(\Omega,\mu)} \\
&= \|(m^n - m^{n+1}) \cdot f\|_{L^2(\Omega,\mu)} \\
&\leq \Big( \int_{\Omega'} |m^n \cdot f|^2 d\mu \Big)^{1/2} \textnormal{ $\to 0$ as $n \to \infty$}.
\end{aligned}
\end{equation}
We know by Lemma \ref{foundational 2}\ref{T*x=x} that $(I-T^*)x = 0 \iff x \in M$. Hence by Lemma \ref{foundational 2}\ref{direct sum ran ker}, we have
\begin{equation*}
H=\overline{\ran(I-T)} \oplus \ker(I-T^*) =\overline{\ran(I-T)} \oplus M.
\end{equation*}
So for any $x \in H$, there is a unique pair $y\in \overline{\ran(I-T)}$ and $z \in M$ such that $x=y+z$. We have by (\ref{kakutani inequality}) that $\|T^nx - T^{n+1}x\| \to 0$ as $n \to \infty$, so we can apply Lemma \ref{foundational 2}\ref{using kakutani} to see that \[\|T^n x - P_Mx\| = \|T^ny + T^n z - z\| = \|T^ny\| \to 0 \, \, \textnormal{ as } n \to \infty,\] thus concluding our proof. \end{proof}

We end this section by remarking that since projections are idempotent, any sequence of projections involving $P_1$ and $P_2$ may be reduced to one where $P_1$ and $P_2$ are alternating. Therefore Theorem \ref{von neumann} does indeed show that if $J=2$, and $(j_n)$ is any sequence, then $(x_n)$ converges in norm.

\subsection{Periodic projections}

In 1962, Halperin improved on von Neumann's theorem to show that $(x_n)$ converges in norm whenever $(j_n)$ is periodic. 

\begin{theorem} [Halperin's theorem] \label{halperin}
Let $H$ be a real or complex Hilbert space, $J \geq 2$ an integer, and $M_1,\dots,M_J$ be a collection of closed subspaces of $H$. Let $T = P_J \dots P_1$, and let $P_M$ be the orthogonal projection onto the intersection $M = \bigcap_{j=1}^J M_j$. Then for each $x\in H$, \[\textnormal{$\|T^n x - P_M x\| \to 0$ as $n\to \infty$.}\]
\end{theorem}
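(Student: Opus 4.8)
The plan is to follow the same high-level strategy as the proof of Theorem~\ref{von neumann}: reduce the statement to two ingredients, namely (i) that $T$ is a contraction for which $\|T^{n+1}x - T^n x\| \to 0$ for every $x \in H$, and (ii) the orthogonal decomposition $H = \overline{\ran(I-T)} \oplus M$ coming from Lemma~\ref{foundational 2}. Granting these, the proof finishes exactly as in Theorem~\ref{von neumann}. Indeed, Lemma~\ref{foundational 2}\ref{direct sum ran ker} together with \ref{T*x=x} gives $H = \overline{\ran(I-T)} \oplus \ker(I-T^*) = \overline{\ran(I-T)} \oplus M$, and this decomposition is orthogonal since $\ker(I-T^*) = (\ran(I-T))^\perp$ by Lemma~\ref{foundational 2}\ref{ran ker equality}. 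So, writing $x = y + z$ with $y \in \overline{\ran(I-T)}$ and $z \in M$, we have $z = P_M x$ and $T^n z = z$ by Lemma~\ref{foundational 2}\ref{Tx=x}, whence $\|T^n x - P_M x\| = \|T^n y\|$; since $T$ is a contraction satisfying (i), Lemma~\ref{foundational 2}\ref{using kakutani} applied to $A = T$ gives $\|T^n y\| \to 0$, and we are done.

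The only real work is ingredient (i). That $T = P_J \cdots P_1$ is a contraction is immediate from Lemma~\ref{foundational}\ref{projection norm}. For the claim $\|T^{n+1}x - T^n x\| \to 0$, I cannot mimic the spectral-theorem argument of Theorem~\ref{von neumann}, since $T$ need not be self-adjoint when $J \ge 2$; instead I would telescope \emph{within a single period}. Fix $x \in H$ and $n \ge 0$, set $z_0 = T^n x$, and define $z_k = P_k z_{k-1}$ for $k = 1, \dots, J$, so that $z_J = T^{n+1} x$. By Lemma~\ref{foundational}\ref{projection equality},
\begin{equation*}
\|z_{k-1} - z_k\|^2 = \|z_{k-1} - P_k z_{k-1}\|^2 = \|z_{k-1}\|^2 - \|z_k\|^2, \qquad k = 1, \dots, J,
\end{equation*}
and summing these $J$ identities telescopes to
\begin{equation*}
\sum_{k=1}^{J} \|z_{k-1} - z_k\|^2 = \|T^n x\|^2 - \|T^{n+1} x\|^2 .
\end{equation*}

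Now, since $T$ is a contraction, the sequence $(\|T^n x\|)_{n \ge 0}$ is non-increasing and bounded below by $0$, hence convergent; therefore the right-hand side above tends to $0$ as $n \to \infty$. As the left-hand side is a sum of $J$ non-negative terms, each $\|z_{k-1} - z_k\| \to 0$, and the triangle inequality yields $\|T^{n+1} x - T^n x\| = \|z_J - z_0\| \le \sum_{k=1}^J \|z_k - z_{k-1}\| \to 0$, establishing (i). I expect the main obstacle to be spotting this telescoping identity (and recognising that monotonicity of $\|T^n x\|$ is what makes it bite); the rest is bookkeeping built on Lemma~\ref{foundational 2}, mirroring the endgame of Theorem~\ref{von neumann}. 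Note that this recovers Theorem~\ref{von neumann} when $J = 2$, and since a periodic sequence $(j_n)$ may be grouped into full periods — each period being a product of projections to which the theorem applies, and $P_M x_0$ being fixed by every $P_j$ — it also gives convergence in norm of $(x_n)$ for any periodic $(j_n)$.
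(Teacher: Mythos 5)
Your proposal is correct and follows essentially the same route as the paper: the endgame via $H=\overline{\ran(I-T)}\oplus M$ and Lemma~\ref{foundational 2}\ref{using kakutani} is identical, and your ingredient (i) is exactly Kakutani's lemma (Lemma~\ref{kakutani}), proved by the same telescoping of Lemma~\ref{foundational}\ref{projection equality} across one period. The only cosmetic difference is that you telescope first and apply the triangle inequality at the end, whereas the paper bounds $\|T^nx-T^{n+1}x\|^2\le J(\|T^nx\|^2-\|T^{n+1}x\|^2)$ directly via $\bigl(\sum_j a_j\bigr)^2\le J\sum_j a_j^2$; both are equally valid.
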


In this section, we follow a proof by Netyanun and Solomon \cite{NeSo06}, which makes use of Kakutani's lemma \cite{Kak40} to prove Theorem \ref{halperin} in a succinct way.

\subsubsection{Kakutani's lemma}
We begin with the core lemma of the proof, due to Kakutani \cite{Kak40}. We remark that we essentially proved Kakutani's lemma for the special case $J=2$ as part of our proof of Theorem \ref{von neumann} (von Neumann). 

\begin{lemma} [Kakutani's lemma] \label{kakutani}
Let $T=P_J\dots P_1$. For each $x \in H$, \[\| T^n x - T^{n+1} x \| \to 0 \textnormal{ as } n \to \infty.\]
\end{lemma}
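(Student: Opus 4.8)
The plan is to reduce the claim to the special case $J=2$ that was essentially established inside the proof of Theorem \ref{von neumann}. The key observation is a standard symmetrisation trick: although $T = P_J \cdots P_1$ need not be self-adjoint, the operator $S = T^* T = P_1 \cdots P_{J-1} P_J P_J P_{J-1} \cdots P_1 = P_1 \cdots P_J \cdots P_1$ is self-adjoint, idempotent-free but positive, and a contraction (being a product of contractions). More precisely, I would introduce the auxiliary self-adjoint contraction $A = P_1 P_2 \cdots P_{J-1} P_J P_{J-1} \cdots P_2 P_1$, which is self-adjoint and positive (for any $x$, $\langle Ax, x\rangle = \|P_J P_{J-1}\cdots P_1 x\|^2 \geq 0$) with $\|A\| \le 1$, so its spectral measure is supported in $[0,1]$. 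This is exactly the situation handled in the proof of Theorem \ref{von neumann}, with $A$ playing the role of $T=P_1P_2P_1$ there.

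First I would apply the spectral theorem to $A$, obtaining a unitary $U$, a measure space $(\Omega,\Sigma,\mu)$ and a multiplier $m \in L^\infty(\Omega,\mu)$ with $0 \le m(t) \le 1$ almost everywhere, such that $UA^nU^{-1}f = m^n \cdot f$. Then, copying the estimate \eqref{dct inequality}--\eqref{kakutani inequality} verbatim (splitting $\Omega$ into $\{m<1\}$ and $\{m=1\}$, killing the second piece since $1-m$ vanishes there, and applying dominated convergence on the first piece since $|m^n f| \le |f|$ and $m^n \to 0$ pointwise on $\{m<1\}$), I conclude that $\|A^n x - A^{n+1} x\| \to 0$ for every $x \in H$; equivalently $\|A^n (I-A) x\| \to 0$, i.e. $A^n w \to 0$ for every $w \in \ran(I-A)$, and hence for every $w \in \overline{\ran(I-A)}$ by Lemma \ref{foundational 2}\ref{using kakutani}.

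Next I would transfer this back to $T$. The cleanest route is: $\|T^n x - T^{n+1}x\|$ is controlled by $\|A^n y - A^{n+1} y\|$-type quantities. Concretely, writing $T^{n+1}x - T^{n}x = T^{n-1}(T^2 - T)x$ and using that each $P_j$ is a contraction, one reduces matters to showing $\|T^n z\| \to 0$ for $z$ in a suitable dense-ish set. Here I would use the splitting $H = \overline{\ran(I-A)} \oplus \ker(I - A)$ (self-adjointness of $A$, via Lemma \ref{foundational 2}\ref{direct sum subspace} and \ref{ran ker equality}); one checks $\ker(I-A) = \bigcap_{j=1}^J \ker(I-P_j) = M$ by the same norm-chain argument as in Lemma \ref{foundational 2}\ref{useful for amemiya}, so $Tz = z$ for $z \in \ker(I-A)$ and the difference $T^nz - T^{n+1}z$ vanishes identically there. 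For $y \in \overline{\ran(I-A)}$, note $\|T^{n+1} y\| \le \|T^n y\|$ (contraction) and, crucially, $\|T^n y\|^2 = \langle (T^*T)^? \cdots$ — more directly, $\|T^{j}\cdots\|$ telescopes: since $\|T^{2n}y\|^2 \le \langle A^n y, y\rangle$ (because $A = T'$ where $T' = P_1\cdots P_J\cdots P_1$ has the same "energy" as $T^*T$ up to the harmless outer/inner $P_1$), one gets $\|T^{2n} y\| \to 0$, and then monotonicity fills in the odd terms; finally $\|T^n y - T^{n+1}y\| \le \|T^n y\| + \|T^{n+1} y\| \to 0$.

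The main obstacle is the last paragraph: matching the self-adjoint auxiliary operator $A$ to the non-self-adjoint $T$ so that decay of $A^n$ genuinely forces $\|T^n x - T^{n+1}x\| \to 0$. The subtlety is that $A = P_1 P_2\cdots P_J \cdots P_2 P_1$ has extra outer $P_1$'s compared to $T^*T = P_1\cdots P_{J-1}P_J P_{J-1}\cdots P_1$ — actually these coincide, so the genuinely careful point is only verifying $\|T^{2n}x\|^2 \le \|A^n\| \,$-type bounds and that the relevant kernels/ranges align. I would handle this by the inequality $\|T^{2n}x\| \le \|(T^*T)^n x\|^{1/2}\cdot\|x\|^{1/2}$-style Cauchy–Schwarz, or more cleanly by noting $\|T^{2n} x\|^2 = \langle (T^*)^{2n}T^{2n} x, x\rangle$ and bounding $(T^*)^{2n}T^{2n} \preceq (T^*T)^{2n} = $ (a power of a self-adjoint contraction of the same type), whose decay on $\overline{\ran(I-A)}$ we have already proved. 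Everything else is a transcription of the Theorem \ref{von neumann} argument with $P_1P_2P_1$ replaced by $P_1\cdots P_J\cdots P_1$.
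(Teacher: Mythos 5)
Your first step is sound: $A = T^{*}T = P_1\cdots P_{J-1}P_JP_{J-1}\cdots P_1$ is a positive self-adjoint contraction, and the spectral-theorem argument from the proof of Theorem \ref{von neumann} does carry over verbatim to give $\|A^{n}x - A^{n+1}x\|\to 0$, hence (via Lemma \ref{foundational 2}\ref{using kakutani} and $H = \overline{\ran(I-A)}\oplus\ker(I-A)$ with $\ker(I-A)=M$) that $A^{n}\to P_M$ strongly. The gap is entirely in the transfer back to $T$, the step you yourself flag as the main obstacle. For $J\geq 3$ there is no algebraic identity relating $T^{n}$ to powers of $A$: after collapsing the doubled $P_1$, the operator $A^{n}$ is the product of projections following the periodic pattern $1,2,\dots,J,\dots,2,1$ repeated, whereas $T^{n}$ follows the pattern $1,\dots,J$ repeated, and control of one product says nothing about the other. (Contrast $J=2$, where $(P_1P_2P_1)^{n} = P_1(P_2P_1)^{n}$; that identity is precisely why the spectral argument closes in the proof of von Neumann's theorem.) The specific inequalities you invoke to bridge the gap are unjustified: $(T^{*})^{2n}T^{2n}\preceq (T^{*}T)^{2n}$ is already false for general $2\times 2$ contractions (e.g.\ a non-normal upper-triangular one), and $\|T^{2n}y\|^{2}\leq\langle A^{n}y,y\rangle$ is asserted rather than proved. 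More fundamentally, decay of $(T^{*}T)^{n}$ on $\overline{\ran(I-T^{*}T)}$ cannot by itself force $\|T^{n}x-T^{n+1}x\|\to 0$: for the contraction $T=-I$ one has $T^{*}T=I$, so every statement about $A$ holds trivially while the conclusion fails badly. Any correct transfer must therefore re-use the product-of-projections structure of $T$, and your sketch does not.

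The paper's own proof needs none of this machinery, and you should compare against it. It observes that $\|T^{n}x\|$ is nonincreasing, so $\|T^{n}x\|^{2} - \|T^{n+1}x\|^{2}\to 0$; it then writes $T^{n}x - T^{n+1}x$ as the telescoping sum of the $J$ differences $Q_jT^{n}x - Q_{j+1}T^{n}x$, where $Q_j = P_j\cdots P_1$, and applies the triangle inequality, the bound $\bigl(\sum_{j}a_j\bigr)^{2}\leq J\sum_j a_j^{2}$, and Lemma \ref{foundational}\ref{projection equality} to each term; the resulting sum telescopes to $J\bigl(\|T^{n}x\|^{2} - \|T^{n+1}x\|^{2}\bigr)$. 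The single elementary inequality $\|x - Tx\|^{2}\leq J\bigl(\|x\|^{2}-\|Tx\|^{2}\bigr)$ is the whole content of the lemma, and it is exactly the structural input your symmetrisation discards. If you want to salvage the spectral route, what it actually proves is the true but different statement that the symmetric periodic product $A^{n}$ converges strongly to $P_M$.
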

\begin{proof}
Let $x \in H$. Since for each $j \in \{1,\dots, J\}$ we have $\|P_j\| \leq 1$, then $\|T^{n+1} x\| \leq \|T^n x\|$. Therefore $\|T^n x\|$ is a monotonically decreasing sequence bounded below by $0$, and so we have
\begin{equation} \label{decreasing sequence}
\|T^nx\|^2 - \|T^{n+1}x\|^2 \to 0 \textnormal{ as } n \to \infty.
\end{equation}
We now let $Q_0 = I $, and for each $ j \in \{1,\dots ,J\}$, we recursively define $Q_j = P_jQ_{j-1}$. Then
\begin{align*}
&\|T^nx - T^{n+1}x\|^2 \\
&= \|(T^n x - P_1T^n x) + (P_1T^nx - P_2P_1T^n x) + \dots  \\
&\quad \quad + (P_{J-1}\dots .P_1T^nx - P_J\dots  P_1T^n x) \|^2 \\
&= \big{\|}\sum_{j=0}^{J-1} (Q_jT^nx - Q_{j+1}T^nx)\big{\|}^2 \\
&\leq \Big{(}\sum_{j=0}^{J-1} \|Q_jT^nx - Q_{j+1}T^nx\| \Big{)}^2 && \textnormal{$\big{[}$triangle inequality$\big{]}$}\\
&\leq J \sum_{j=0}^{J-1} \|Q_jT^nx - Q_{j+1}T^nx\|^2 && \textnormal{$\Big{[}\big{(}\sum_{j=0}^{J} a_j\big{)}^2 \leq J\sum_{j=0}^{J} {a_j}^2 \Big{]}$} \\
& = J \sum_{j=0}^{J-1} (\|Q_jT^nx\|^2 - \|Q_{j+1}T^nx\|^2) && \textnormal{$\big{[}$Lemma \ref{foundational}\ref{projection equality}$\big{]}$}\\
&= J(\|Q_0T^nx\|^2 - \|Q_JT^nx\|^2) && \textnormal{$\big{[}$telescoping series$\big{]}$} \\
& = J(\|T^nx\|^2 - \|T^{n+1}x\|^2) \to 0 \textnormal{ as } n \to \infty. && \textnormal{$\big{[}Q_J=T$ and (\ref{decreasing sequence})$\big{]}$}
\end{align*}
This concludes the proof.
\end{proof}

\subsubsection{Proving Halperin's theorem}

We are now ready to prove Theorem \ref{halperin}.

\begin{proof}[Proof of Theorem \ref{halperin}]
As in the proof of Theorem \ref{von neumann}, we have by Lemma \ref{foundational 2}\ref{T*x=x} that $(I-T^*)x = 0 \iff x \in M$. Hence by Lemma \ref{foundational 2}\ref{direct sum ran ker},
\begin{equation*}
H=\overline{\ran(I-T)} \oplus \ker(I-T^*) =\overline{\ran(I-T)} \oplus M.
\end{equation*}
So for any $x \in H$, there is a unique pair $y\in \overline{\ran(I-T)}$ and $z \in M$ such that $x=y+z$. By  Lemma \ref{kakutani} (Kakutani), we have $\|T^nx - T^{n+1}x\| \to 0$ as $n \to \infty$. Therefore, we can apply Lemma \ref{foundational 2}\ref{using kakutani} to see that \[\|T^n x - P_Mx\| = \|T^ny + T^n z - z\| = \|T^ny\| \to 0 \, \, \textnormal{ as } n \to \infty,\] thus completing the proof.\end{proof}

We remark that it is in fact relatively simple to extend Theorem \ref{halperin} (Halperin), so that instead of projections, we consider contractions that are non-negative. An identical proof to that of Theorem \ref{halperin} works here; the only difference is that we do not know whether, for a non-negative contraction $A$, we have
\begin{equation} \label{inequality for kakutani}
 \|x-Ax\|^2 \leq \|x\|^2 - \|Ax\|^2, \quad x\in H.
\end{equation}
In Lemma \ref{foundational}\ref{projection equality}, we proved (\ref{inequality for kakutani}) for the special case where $A$ is a projection. It turns out it is also true more generally when $A$ is a non-negative contraction; a proof can be found in \cite{NeSo06}. We note that this extension was in fact first proved by Amemiya and Ando \cite{AmAn65}, and more recently by Bauschke, Deutsch, Hundal and Park \cite{BaDeHuPa03}.

\subsection{Quasiperiodic projections}

It turns out we can generalise Theorem \ref{halperin} (Halperin) by finding an even weaker condition than periodicity for the sequence of projections to converge in norm. In 1995, Sakai proved that we have convergence in norm if the sequence of projections is so-called \textit{quasiperiodic} \cite{Sak95}. Before defining what it means for a sequence to be quasiperiodic, or formally stating Sakai's theorem, we remind ourselves of our usual setting.

Let $H$ be a real or complex Hilbert space, $J\geq 2$ be an integer, and $M_1,\dots ,M_J$ be a family of closed subspaces of $H$ with intersection $M=\bigcap_{j=1}^J M_j$. Given a closed subspace $Y$ of $H$, we write $P_Y$ for the orthogonal projection onto $Y$, and for ease of notation, we write $P_1,\dots, P_J$ for the orthogonal projections onto the closed subspaces $M_1,\dots,M_J$ respectively. Let $(j_n)_{n\geq1}$ be a sequence taking values in $\{1,\dots,J\}$. We define the sequence $(x_n)_{n\geq0}$ by picking an arbitrary element $x_0 \in H$, and letting
\begin{equation*}
x_n = P_{j_n}x_{n-1}, \quad n\geq 1.
\end{equation*}
For ease of notation, we write
\begin{equation*}
s = (j_n)_{n\geq1}.
\end{equation*}
We now define what it means for a sequence to be quasiperiodic.
\begin{definition*}
Consider a sequence $s= (j_n)_{n\geq1}$, where each $j_n \in \{1,\dots,J\}$. We say that $s$ is \textit{quasiperiodic} if each $i \in \{1,\dots,J\}$ appears in $s$ infinitely many times, and that for each such $i$,
\begin{equation*}
I(s,i) = \sup_n\big{(}k_n(i) - k_{n-1}(i)\big{)}
\end{equation*}
is finite, where $k_0(i) = 0$, and $(k_n(i))_{n\geq1}$ is the increasing sequence of all natural numbers such that $j_{k_n(i)}=i$.

Put more simply, $s$ being quasiperiodic means that the number of entries between an element $i \in \{1,\dots,J\}$ (in the sequence $s$) and the next appearance of it, is bounded (by $I(s,i) < \infty$).
\end{definition*}

\begin{theorem}[Sakai's Theorem] \label{sakai theorem}
If $(j_n)$ is a quasiperiodic sequence, then $(x_n)$ converges in norm to the limit of the orthogonal projection of $x_0$ onto $M= \bigcap_{j=1}^J M_j$.
\end{theorem}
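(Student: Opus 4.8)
The plan is to use the bounded-gap structure of $s$ to control the tail behaviour of the iterates, deduce that $(x_n)$ converges weakly to $P_M x_0$, and then upgrade this to convergence in norm; the last of these is where the real content lies. First I would reduce to the case $x_0 \in M^\perp$. Since $M \subseteq M_j$ for every $j$, each $P_j$ fixes $M$ pointwise, and being self-adjoint it therefore maps $M^\perp$ into itself; the same observation, applied through Lemma \ref{foundational}\ref{preliminary}, gives $P_M x_n = P_M x_{n-1} = \dots = P_M x_0$ for all $n$. Writing $\widetilde{x}_0 = x_0 - P_M x_0 \in M^\perp$, one checks that the corresponding iterates satisfy $\widetilde{x}_n = x_n - P_M x_0$, so by Lemma \ref{foundational}\ref{pythagoras} we have $\|x_n\|^2 = \|P_M x_0\|^2 + \|\widetilde{x}_n\|^2$. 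Hence $x_n \to P_M x_0$ in norm if and only if $\|\widetilde{x}_n\| \to 0$, and it suffices to prove that $\|x_n\| \to 0$ whenever $x_0 \in M^\perp$.

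Next I would assemble the standard regularity facts. Each $P_j$ is a contraction, so $\|x_n\|$ is non-increasing and hence convergent; thus $\|x_n\|^2 - \|x_{n+1}\|^2 \to 0$, and Lemma \ref{foundational}\ref{projection equality} gives $\|x_n - x_{n+1}\|^2 = \|x_n\|^2 - \|x_{n+1}\|^2$, so these increments are square-summable and in particular $\|x_n - x_{n+1}\| \to 0$. Telescoping and Cauchy--Schwarz then yield $\|x_n - x_{n+k}\| \le \sqrt{k}\,\bigl(\sum_{i=n}^{n+k-1}\|x_i - x_{i+1}\|^2\bigr)^{1/2} \to 0$ as $n \to \infty$, for every fixed $k$. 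Now fix $N$ with the property, guaranteed by quasiperiodicity (take $N = \max_i I(s,i)$, say), that every element of $\{1,\dots,J\}$ occurs among any $N$ consecutive terms of $s$. Given $j$ and $n$, pick $m \in (n,n+N]$ with $j_m = j$; since $x_m = P_j x_{m-1}$,
\[
\|x_n - P_j x_n\| \le \|x_{m-1} - x_n\| + \|x_{m-1} - P_j x_{m-1}\| + \|P_j x_{m-1} - P_j x_n\| \le 2\|x_{m-1} - x_n\| + \|x_{m-1} - x_m\|,
\]
and by the previous sentence the right-hand side tends to $0$ as $n \to \infty$, uniformly over the finitely many offsets $m - n \le N$. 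So $\|P_j x_n - x_n\| \to 0$ for each $j$.

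Now I would pass to weak limits. The sequence $(x_n)$ is bounded, so every subsequence has a weakly convergent sub-subsequence; if $x_{n_k} \rightharpoonup x^*$ then $P_j x_{n_k} \rightharpoonup P_j x^*$ by weak continuity of the bounded operator $P_j$, while $P_j x_{n_k} - x_{n_k} \to 0$ in norm, hence weakly, so $P_j x^* = x^*$ for every $j$, i.e. $x^* \in M$. But $x_n \in M^\perp$, and $M^\perp$ is a closed subspace, hence weakly closed, so $x^* = 0$. Thus every weak cluster point of $(x_n)$ is $0$, and therefore $x_n \rightharpoonup 0$. (Without the reduction, the same argument together with $P_M x_n = P_M x_0$ shows every weak cluster point equals $P_M x_0$, so $x_n \rightharpoonup P_M x_0$ in general.)

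The main obstacle is the step from here to convergence in norm: one must show that $\ell := \lim_n \|x_n\| = 0$. Weak convergence together with monotonicity of $\|x_n\|$ is not enough on its own --- an orthonormal sequence converges weakly to $0$ with constant norm, and in fact one can construct a sequence in a Hilbert space satisfying \emph{all} the conclusions reached above except norm convergence; such a sequence simply fails to be a genuine product of projections --- so quasiperiodicity has to re-enter quantitatively, and the dynamics $x_n = P_{j_n}x_{n-1}$ (with $x_n \in M_{j_n}$ exactly) must be used, not merely the fact that $x_n$ is almost fixed by each $P_j$. Here I would follow Sakai: group $s$ into consecutive blocks of length $N$, each containing every index, so that the subsampled sequence $(x_{tN})_{t \ge 0}$ is driven by operators $B_t = P_{j_{tN}}\cdots P_{j_{(t-1)N+1}}$ drawn from a fixed \emph{finite} set of products of orthogonal projections, each with fixed-point set exactly $M$; then combine the per-block energy decrements $\|x_{tN}\|^2 - \|x_{(t+1)N}\|^2$, the almost-invariance $\|B_t x_{tN} - x_{tN}\| \to 0$, and the weak nullity of the iterates to force $\ell = 0$. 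Once $\ell = 0$ is established for $x_0 \in M^\perp$, the identity $\|x_n\|^2 = \|P_M x_0\|^2 + \|x_n - P_M x_0\|^2$ from the first paragraph, together with monotonicity of $\|x_n\|$, gives $\|x_n - P_M x_0\| \to 0$ for the original iterates, which is the assertion of the theorem. I expect this closing energy-bookkeeping argument, exploiting the finiteness of the possible block operators and the bounded gaps, to be the technically hardest part of the proof.
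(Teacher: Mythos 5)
Your preparatory work is sound: the reduction to $x_0 \in M^\perp$, the square-summability of the increments, the estimate $\|P_j x_n - x_n\| \to 0$ via the bounded gaps, and the identification of every weak cluster point as $P_M x_0$ are all correct, and the last of these is essentially the paper's Lemma \ref{key result sakai 2}. You are also right that the entire difficulty is concentrated in upgrading weak convergence to norm convergence. But that is exactly the step you do not carry out, and the sketch you offer for it does not describe a workable mechanism. You propose to ``combine the per-block energy decrements $\|x_{tN}\|^2 - \|x_{(t+1)N}\|^2$, the almost-invariance $\|B_t x_{tN} - x_{tN}\| \to 0$, and the weak nullity of the iterates to force $\ell = 0$'', but these three facts hold verbatim for the non-convergent sequences of Section \ref{failure strong convergence} (there too the iterates converge weakly, the norms decrease to a positive limit, and consecutive differences vanish), so they cannot by themselves force $\ell = 0$. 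The only genuinely new leverage in your setup is that the block operators $B_t$ range over a \emph{finite} set of products of projections, each with fixed-point set exactly $M$, and you give no argument that exploits this finiteness; note also that weak nullity carries no quantitative information about norms, so it cannot be ``combined'' with energy decrements in any direct way.

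What actually closes the gap in Sakai's argument --- and in the paper --- is a Cauchy estimate that makes no use of weak convergence at all: one proves the inequality (\ref{key inequality}) with $A = (I-1)(I-2)+3$, which by the computation in Lemma \ref{key result sakai} rewrites as $\|x_n - x_m\|^2 \le A(\|x_m\|^2 - \|x_n\|^2)$ and immediately yields that $(x_n)$ is Cauchy; weak convergence enters only afterwards, to identify the limit. Establishing (\ref{key inequality}) is a genuinely delicate induction: one repeatedly shortens the gap between $m$ and $n$ by $I-1$ steps (statements (i) and (ii) in the paper, choosing which endpoint to move according to which of the local energies $S_m$, $S_{n-I+1}$ is larger), and at each step the error is controlled by Lemma \ref{small lemma} together with the observation that, by quasiperiodicity, each window of $I$ consecutive iterates contains a point of every $M_j$, so that $\|x_n - P_{j}x_n\|$ is bounded by a local increment sum via Lemma \ref{foundational}\ref{projection is onto closest point} and Lemma \ref{easy inequality sakai}. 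None of this quantitative bookkeeping is present in your proposal, so as it stands the proof has a hole precisely where you predicted the hardest work would lie.
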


We follow Sakai's proof of this result \cite{Sak95}, splitting it into small subsections, each highlighting a key element or idea of the proof. 

\subsubsection{A criterion for convergence}

We begin by proving a lemma which gives us a criterion for $(x_n)$ to converge in norm.

\begin{lemma} \label{key result sakai}
Suppose there is a constant $A$ (which may depend on the sequence $s$), such that 
\begin{equation} \label{key inequality}
\|x_n - x_m\|^2 \leq A \sum_{k=m}^{n-1} \|x_{k+1} - x_k \|^2, \quad n>m\geq 1.
\end{equation}
Then the sequence $(x_n)$ converges in norm.
\end{lemma}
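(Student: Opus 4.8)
The plan is to exploit the identity from Lemma \ref{foundational}\ref{projection equality} to convert the right-hand side of (\ref{key inequality}) into a telescoping sum, and then to show that sum goes to zero by a monotonicity argument, so that $(x_n)$ is Cauchy.

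First I would record two elementary observations about the sequence $(x_n)$ in our standard setting. Since $x_{k+1} = P_{j_{k+1}} x_k$ and each $P_j$ is a contraction (Lemma \ref{foundational}\ref{projection norm}), the sequence $\|x_n\|$ is non-increasing and bounded below by $0$, hence convergent; in particular $(\|x_n\|^2)$ is a Cauchy sequence of real numbers. Secondly, because $x_{k+1}$ is literally the orthogonal projection of $x_k$ onto $M_{j_{k+1}}$, Lemma \ref{foundational}\ref{projection equality} applies verbatim and gives $\|x_{k+1} - x_k\|^2 = \|x_k\|^2 - \|x_{k+1}\|^2$ for every $k \geq 1$.

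Next I would telescope: for $n > m \geq 1$,
\begin{equation*}
\sum_{k=m}^{n-1} \|x_{k+1} - x_k\|^2 = \sum_{k=m}^{n-1} \big(\|x_k\|^2 - \|x_{k+1}\|^2\big) = \|x_m\|^2 - \|x_n\|^2.
\end{equation*}
Combining this with the hypothesis (\ref{key inequality}) yields $\|x_n - x_m\|^2 \leq A\big(\|x_m\|^2 - \|x_n\|^2\big)$ for all $n > m \geq 1$. Given $\varepsilon > 0$, the Cauchy property of $(\|x_n\|^2)$ supplies an $N$ such that $\big|\|x_m\|^2 - \|x_n\|^2\big| < \varepsilon/A$ whenever $n, m \geq N$, whence $\|x_n - x_m\|^2 < \varepsilon$; thus $(x_n)$ is Cauchy. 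Since $H$ is complete, $(x_n)$ converges in norm, which is the assertion of the lemma.

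I do not expect a genuine obstacle here: the only point requiring a little care is noticing that Lemma \ref{foundational}\ref{projection equality} is applicable to consecutive terms of $(x_n)$ (because each step is an honest orthogonal projection), and that monotonicity of $(\|x_n\|)$ is exactly what makes the telescoped right-hand side a Cauchy tail. Everything else is bookkeeping.
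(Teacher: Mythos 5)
Your proposal is correct and follows essentially the same route as the paper's proof: apply Lemma \ref{foundational}\ref{projection equality} to consecutive terms, telescope the sum to $\|x_m\|^2 - \|x_n\|^2$, and use monotone convergence of $(\|x_n\|)$ to conclude that $(x_n)$ is Cauchy. No gaps.
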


\begin{proof}
Since $x_{k+1}$ is the orthogonal projection of $x_k$ onto $M_{j_{k+1}}$, by Lemma \ref{foundational}\ref{projection equality} we have
\begin{equation*}
\|x_{k+1}\|^2 + \|x_{k} - x_{k+1}\|^2 = \|x_{k+1}\|^2 + \|x_{k}\|^2 - \|x_{k+1}\|^2 = \|x_k\|^2.
\end{equation*}
Hence $(\|x_k\|)$ is monotonically decreasing. Adding the equalities from $k=m$ to $k=n-1$, we obtain
\begin{equation*}
\|x_{m}\|^2 = \|x_n\|^2 + \sum_{k=m}^{n-1} \|x_{k+1} - x_{k}\|^2.
\end{equation*}
Therefore (\ref{key inequality}) is equivalent to
\begin{equation*}
\|x_n - x_m\|^2 \leq A(\|x_{m}\|^2 - \|x_n\|^2).
\end{equation*}
Since $(\|x_k\|)$ is monotonically decreasing and bounded below by $0$, we have that $\|x_k\|$ converges to some limit $c\geq0$. In particular, given $\varepsilon >0$, there exists $K \in \mathbb{N}$ such that whenever $n \geq K$,\[ 0 \leq \|x_n\| - c \leq \frac{\varepsilon}{2A}.\]
Therefore for $n,m\geq K$, we have 
\begin{equation*}
\begin{aligned}
\|x_n - x_m\|^2 &\leq A(\|x_{m}\|^2 - \|x_n\|^2) \\ 
&\leq A\left\lvert \|x_m\|^2 - c\big{|} + A\big{|}c - \|x_n\|^2 \right\lvert  \\
& < A\cdot  \frac{\varepsilon}{2A} + A\cdot  \frac{\varepsilon}{2A} \leq \varepsilon.
\end{aligned}
\end{equation*}
Hence $(x_n)$ is a Cauchy sequence, and so converges in norm (since $H$, being a Hilbert space, is complete).
\end{proof}

So under the conditions in Theorem \ref{key result sakai}, we have that the sequence $(x_n)$ converges in norm to some limit, say $x_\infty$. In particular, it also converges weakly to this limit. In the next lemma, we show that under the additional assumption that each projection appears infinitely many times in the sequence $(P_{j_n})_{n\geq 0}$, we have that $x_\infty = P_Mx_0$.

\begin{lemma} \label{key result sakai 2}
Suppose the sequence $(x_n)$ converges weakly. Suppose also that that $s = (j_n)$ takes every value in $\{1,\dots,J\}$ infinitely many times. Then the limit is the orthogonal projection of $x_0$ onto $M= \bigcap_{j=1}^J M_j$.
\end{lemma}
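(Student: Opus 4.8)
The plan is to show that the weak limit $x_\infty$ of $(x_n)$ lies in $M$, and then that $x_0 - x_\infty \perp M$; together these identify $x_\infty$ as $P_M x_0$. First I would establish that $x_\infty \in M$. Fix $i \in \{1,\dots,J\}$. By hypothesis there is a subsequence $(n_k)$ with $j_{n_k} = i$ for all $k$, so $x_{n_k} = P_i x_{n_k - 1} \in M_i$. We already know from Lemma \ref{key result sakai}'s argument (monotonicity of $\|x_k\|$) that $\|x_{n_k} - x_{n_k - 1}\|^2 = \|x_{n_k - 1}\|^2 - \|x_{n_k}\|^2 \to 0$, since $\|x_k\|$ converges; hence $x_{n_k - 1}$ and $x_{n_k}$ have the same weak limit $x_\infty$. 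As $M_i$ is a closed subspace, it is weakly closed (closed and convex), so $x_\infty \in M_i$. Since $i$ was arbitrary, $x_\infty \in \bigcap_{i=1}^J M_i = M$.

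Next I would show $x_0 - x_\infty \in M^\perp$, which combined with $x_\infty \in M$ gives $P_M x_0 = P_M x_\infty + P_M(x_0 - x_\infty) = x_\infty$, as desired. The key observation is that each step changes $x_k$ only within the subspace $M_{j_{k+1}}$: indeed $x_k - x_{k+1} = x_k - P_{j_{k+1}} x_k = (I - P_{j_{k+1}}) x_k \in M_{j_{k+1}}^\perp$. Now take any $m \in M$. Then $\langle x_k - x_{k+1}, m \rangle = 0$ for every $k \geq 0$, because $m \in M \subseteq M_{j_{k+1}}$ and $x_k - x_{k+1} \perp M_{j_{k+1}}$. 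Summing the telescoping series, $\langle x_0 - x_n, m \rangle = \sum_{k=0}^{n-1} \langle x_k - x_{k+1}, m\rangle = 0$ for all $n$, so $\langle x_0, m \rangle = \langle x_n, m \rangle$. Letting $n \to \infty$ and using weak convergence, $\langle x_0, m\rangle = \langle x_\infty, m \rangle$, i.e. $\langle x_0 - x_\infty, m \rangle = 0$. Since $m \in M$ was arbitrary, $x_0 - x_\infty \in M^\perp$.

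Finally, assemble: write $x_0 = x_\infty + (x_0 - x_\infty)$ with $x_\infty \in M$ and $x_0 - x_\infty \in M^\perp$; by uniqueness of the orthogonal decomposition $H = M \oplus M^\perp$, this is precisely the decomposition defining $P_M$, so $P_M x_0 = x_\infty$. This completes the proof.

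I expect the main subtlety to be the first part — justifying that $M_i$ is weakly sequentially closed and that $x_{n_k-1}$ shares the weak limit with $x_{n_k}$. The latter needs the norm-convergence of $\|x_k\|$ (hence $\|x_{n_k}-x_{n_k-1}\| \to 0$), which is available from the monotonicity already proved; then a weak-minus-norm-null argument finishes it. The second part is essentially a one-line telescoping computation once one notices $x_k - x_{k+1} \perp M_{j_{k+1}} \supseteq M$, so it should be routine.
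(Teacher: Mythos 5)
Your proof is correct and follows essentially the same route as the paper: extract a subsequence lying in each $M_i$ to show the weak limit belongs to $M$, then use $x_k - x_{k+1} \in M_{j_{k+1}}^\perp \subseteq$ (the orthogonal complement of anything in $M$) and telescope to get $x_0 - x_\infty \in M^\perp$. The only difference is that your detour through $x_{n_k-1}$ and the norm-null step is unnecessary --- you already have $x_{n_k} \in M_i$ directly, and $M_i$ being weakly closed (which the paper verifies by pairing against $M_i^\perp$ rather than citing closed-plus-convex) finishes that part immediately.
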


\begin{proof}
By assumption, $(x_n)$ converges weakly to some limit, say $x_\infty$. Each $j \in \{1,\dots,J\}$ occurs infinitely many times in $s$, and so there is some subsequence $(x_{n_k})_{k\geq1}$ such that each $x_{n_k} \in M_j$. Then for every $y \in M_j^\perp$, we have $\langle x_{n_k},y \rangle =0$, and therefore
\begin{equation*}
\langle x_\infty , y \rangle = \lim_{k\to\infty} \langle x_{n_k}, y \rangle =  \lim_{k\to\infty}0 = 0.
\end{equation*}
Hence $x_\infty \in M_j$ for each $j \in \{1,\dots,J\}$, and so $x_\infty \in M$.

To show that $x_\infty$ is the orthogonal projection of $x_0$ onto  $M$, it suffices to show that $x_0 - x_\infty \in M^\perp$, since then we would have
\begin{equation*}
x_0 = \underbrace{x_\infty}_{\in M} + \underbrace{x_0 - x_\infty}_{\in M^\perp}. 
\end{equation*}
Let $x \in M$. For every $n\geq0$, we have $(I-P_{j_{n+1}})x_n \in (M_{j_{n+1}})^\perp$ and $x \in M_{j_{n+1}}$. Therefore,
\begin{align*}
\langle x_n - x_{n+1},x \rangle &= \langle x_n - P_{j_{n+1}}x_n,x \rangle \\
&= \langle (I-P_{j_{n+1}})x_n, x \rangle \\
&=0.
\end{align*}
Adding these from $n=0$ to $n=h-1$, we have $\langle x_0 - x_h,x \rangle=0$, and so
\begin{equation*}
 \langle x_0 - x_\infty,x \rangle = \lim_{h\to \infty} \langle x_0 - x_h,x \rangle =  \lim_{h\to \infty} 0 = 0.
\end{equation*}
Hence $x_0 - x_\infty \in M^\perp$, and so $(x_n)$ converges weakly to the orthogonal projection of $x_0$ onto $M$.
\end{proof}

We now state a simple corollary of Theorems \ref{key result sakai} and \ref{key result sakai 2}.

\begin{corollary} \label{key result sakai 3}
Suppose the sequence $s$ is quasiperiodic. Suppose also that there is a constant $A$ (which may depend on the sequence $s$), such that 
\begin{equation} \label{key inequality 2}
\|x_n - x_m\|^2 \leq A \sum_{k=m}^{n-1} \|x_{k+1} - x_k \|^2, \quad n>m\geq 1.
\end{equation}
Then the sequence $(x_n)$ converges in norm to the orthogonal projection of $x_0$ onto $M$.
\end{corollary}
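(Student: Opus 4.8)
The plan is to simply chain together the two preceding lemmas, since the corollary is essentially their conjunction. First I would observe that the hypothesis (\ref{key inequality 2}) is word-for-word the hypothesis (\ref{key inequality}) of Lemma \ref{key result sakai}, so that lemma applies directly and yields that $(x_n)$ converges in norm to some limit $x_\infty \in H$.

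Next I would upgrade this to the statement needed to invoke Lemma \ref{key result sakai 2}. Convergence in norm implies convergence in the weak topology (if $\|x_n - x_\infty\| \to 0$ then $\langle x_n, y\rangle \to \langle x_\infty, y\rangle$ for every $y \in H$ by Cauchy--Schwarz), so $(x_n)$ converges weakly. It remains only to check that $s$ takes every value in $\{1,\dots,J\}$ infinitely many times; but this is immediate from the definition of quasiperiodicity, which explicitly requires each $i \in \{1,\dots,J\}$ to appear in $s$ infinitely often. Thus both hypotheses of Lemma \ref{key result sakai 2} are met, and it gives that $x_\infty = P_M x_0$.

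There is no real obstacle here: the corollary is a bookkeeping consequence of Lemmas \ref{key result sakai} and \ref{key result sakai 2}, and the only point requiring a word of justification is the (trivial) implication from norm convergence to weak convergence, together with the observation that quasiperiodicity subsumes the ``each value infinitely often'' condition. The genuine content --- verifying inequality (\ref{key inequality 2}) for a quasiperiodic sequence --- is deferred to the later parts of Sakai's argument and is not part of this statement.
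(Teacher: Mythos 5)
Your proof is correct and follows essentially the same route as the paper: apply Lemma \ref{key result sakai} to get norm (hence weak) convergence, note that quasiperiodicity guarantees each index appears infinitely often, and then invoke Lemma \ref{key result sakai 2} to identify the limit as $P_M x_0$. Nothing is missing.
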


\begin{proof}
By Lemma \ref{key result sakai}, we immediately have that $(x_n)$ converges in norm, and so in particular converges weakly to some limit, say $x_\infty$. Since $s$ is quasiperiodic, it takes every value in \{1,\dots,J\} infinitely many times. Therefore, Lemma \ref{key result sakai 2} gives that $x_\infty = P_Mx_0$. 
\end{proof}

In particular, given a quasiperiodic sequence $s$, if we can find a constant $A$ such that (\ref{key inequality 2}) holds, then Theorem \ref{sakai theorem} (Sakai) follows immediately. The rest of this section involves finding such a constant.

\subsubsection{Useful lemmas}
We proceed to prove two simple, but useful, lemmas. These will be used in later parts of the proof of Theorem \ref{sakai theorem} (Sakai).

\begin{lemma} \label{easy inequality sakai}
Let $y_1, y_2, \dots , y_N, y_{N+1} \in H$. Then
\begin{equation*}
\|y_{N+1} - y_1\|^2 \leq N\sum_{k=1}^N \|y_{k+1} - y_k\|^2.
\end{equation*}
\end{lemma}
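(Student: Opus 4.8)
The plan is to prove Lemma \ref{easy inequality sakai} by first writing the difference $y_{N+1}-y_1$ as a telescoping sum, and then applying the triangle inequality followed by the elementary Cauchy--Schwarz-type inequality $(\sum_{k=1}^N a_k)^2 \leq N \sum_{k=1}^N a_k^2$. This is exactly the same pattern of argument already used in the proof of Lemma \ref{kakutani} (Kakutani's lemma), just applied to an arbitrary finite list of vectors rather than to the iterates $T^n x$.

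First I would note that
\begin{equation*}
y_{N+1} - y_1 = \sum_{k=1}^N (y_{k+1} - y_k),
\end{equation*}
which is immediate from cancellation of intermediate terms. Taking norms and applying the triangle inequality gives $\|y_{N+1} - y_1\| \leq \sum_{k=1}^N \|y_{k+1} - y_k\|$. Squaring both sides and invoking the inequality $\big(\sum_{k=1}^N a_k\big)^2 \leq N \sum_{k=1}^N a_k^2$ (with $a_k = \|y_{k+1}-y_k\|$), which follows from the Cauchy--Schwarz inequality applied to the vectors $(a_1,\dots,a_N)$ and $(1,\dots,1)$ in $\mathbb{R}^N$, yields
\begin{equation*}
\|y_{N+1} - y_1\|^2 \leq \Big(\sum_{k=1}^N \|y_{k+1}-y_k\|\Big)^2 \leq N \sum_{k=1}^N \|y_{k+1}-y_k\|^2,
\end{equation*}
which is the claim.

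There is essentially no obstacle here: the statement is a purely formal consequence of the triangle inequality and a finite-dimensional Cauchy--Schwarz estimate, with no reliance on the Hilbert space structure beyond the norm axioms. The only point worth a moment's care is making sure the constant is $N$ (the number of summands) and not $N+1$ or $N-1$; since the telescoping sum has exactly $N$ terms, the power-mean inequality is applied with $N$ terms and the constant is $N$ as stated.
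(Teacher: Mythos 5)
Your proof is correct and follows exactly the same route as the paper: telescope $y_{N+1}-y_1$ into $\sum_{k=1}^N(y_{k+1}-y_k)$, apply the triangle inequality, and then use $\big(\sum_{k=1}^N a_k\big)^2 \leq N\sum_{k=1}^N a_k^2$. Your justification of that last inequality via Cauchy--Schwarz against the all-ones vector is a nice touch the paper leaves implicit.
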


\begin{proof}
Applying the triangle inequality along with $\big{(}\sum_{k=1}^{N} a_k\big{)}^2 \leq N\sum_{k=1}^{N} a_k^2$, we have
\begin{align*}
\|y_{N+1} - y_1\| &= \Big{\|}\sum_{k=1}^N y_{k+1} - y_k \Big{\|}^2 \\
&\leq \Big{(}\sum_{k=1}^N \|y_{n+1} - y_n\|\Big{)}^2 \\
&\leq N\sum_{k=1}^N \|y_{n+1} - y_n\|^2. \qedhere
\end{align*}
\end{proof}

\begin{lemma} \label{small lemma}
Let $P$ be the orthogonal projection onto a closed subspace of $H$, and let $x,y \in H$. Then
\begin{enumerate}[label=(\alph*)]
\item $\|x-Py\|^2 \leq \|x-y\|^2 + \|x-Px\|^2$. \label{small lemma a}
\item $\|x-y\|^2 \leq \|x-Py\|^2 + \|x-Px\|^2 + 2\|y-Py\|^2$. \label{small lemma b}
\end{enumerate}
\end{lemma}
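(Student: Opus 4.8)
The plan is to prove both parts by expanding the squared norms via inner products and exploiting the two defining features of an orthogonal projection: self-adjointness together with idempotency (Lemma \ref{foundational}\ref{preliminary}), and the orthogonality relation $x - Px \perp Py$ for all $x, y \in H$ (since $x - Px \in Y^\perp$ and $Py \in Y$). For part \ref{small lemma a}, I would write $x - Py = (x - y) + (y - Py) = (x - y) + (I-P)y$ and take norms. Wait — that is not quite the cleanest route; instead I would split off the component of $x$ in $Y$: write $x - Py = (x - Px) + (Px - Py) = (x-Px) + P(x-y)$. The two summands are orthogonal (the first lies in $Y^\perp$, the second in $Y$), so Lemma \ref{foundational}\ref{pythagoras} gives $\|x - Py\|^2 = \|x - Px\|^2 + \|P(x-y)\|^2$, and then $\|P(x-y)\| \le \|x-y\|$ by Lemma \ref{foundational}\ref{projection norm}. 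This yields \ref{small lemma a} with room to spare (in fact one gets $\le$ with $\|x-Px\|^2$ replaced by the exact term, but the stated form suffices).

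For part \ref{small lemma b}, the idea is to run the triangle inequality through the intermediate point $Py$, but carefully, since we want control only by the three quantities on the right. Write $x - y = (x - Py) + (Py - y) = (x - Py) - (I-P)y$. Then $\|x-y\|^2 \le \|x - Py\|^2 + 2\|x-Py\|\,\|y - Py\| + \|y-Py\|^2$. The cross term is the obstacle: I need to absorb $2\|x-Py\|\,\|y-Py\|$ into the allowed budget. Here I would use \ref{small lemma a} to bound $\|x-Py\|^2 \le \|x-y\|^2 + \|x-Px\|^2$, but that reintroduces $\|x-y\|$. A cleaner alternative: apply the elementary inequality $2ab \le a^2 + b^2$ with $a = \|x-Py\|$, $b = \|y-Py\|$ is too lossy. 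Instead, I would return to the orthogonal decomposition from part \ref{small lemma a}: since $\|x-Py\|^2 = \|x-Px\|^2 + \|P(x-y)\|^2$, and $\|Py - y\|^2 = \|(I-P)y\|^2$, and observing $x - y = P(x-y) + (I-P)(x-y) = P(x-y) + \big[(I-P)x - (I-P)y\big]$, Pythagoras gives $\|x-y\|^2 = \|P(x-y)\|^2 + \|(I-P)x - (I-P)y\|^2 \le \|P(x-y)\|^2 + \big(\|(I-P)x\| + \|(I-P)y\|\big)^2 \le \|P(x-y)\|^2 + 2\|x-Px\|^2 + 2\|y-Py\|^2$. Combining with $\|P(x-y)\|^2 \le \|x-Py\|^2$ from part \ref{small lemma a} gives exactly the claimed bound.

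The main obstacle is purely the bookkeeping in part \ref{small lemma b}: choosing the right orthogonal splitting of $x - y$ so that the cross terms produced by the triangle inequality land inside $2\|y - Py\|^2 + \|x-Px\|^2$ rather than generating an uncontrolled $\|x - y\|$ on the right. Decomposing $x-y$ into its $Y$ and $Y^\perp$ parts first, and only then applying the (scalar) triangle inequality to the $Y^\perp$-part $(I-P)x - (I-P)y$, is what makes the constants work out; everything else is a routine application of Lemma \ref{foundational}\ref{pythagoras} and \ref{foundational}\ref{projection norm}.
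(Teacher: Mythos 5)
Your proof of part (a) is essentially identical to the paper's: decompose $x - Py = (x-Px) + P(x-y)$ into orthogonal pieces, apply Lemma \ref{foundational}\ref{pythagoras}, and bound $\|P(x-y)\| \leq \|x-y\|$.

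For part (b) you take a genuinely different route. The paper expands $\|x-y\|^2 = \|(x-Py)-(y-Py)\|^2$, rewrites the cross term as $\langle x-Py,\, y-Py\rangle = \langle x-Px,\, y-Py\rangle$ using the orthogonality of $Px$ and $Py$ to $y-Py$, and then applies Cauchy--Schwarz together with $2ab \leq a^2+b^2$; that is where the asymmetric coefficients $1$ and $2$ come from. You instead split $x-y$ into its $Y$- and $Y^\perp$-components and apply the triangle inequality only to the $Y^\perp$-part. This is a clean idea and it does work, but not quite as you have written it: your chain gives $\|x-y\|^2 \leq \|P(x-y)\|^2 + 2\|x-Px\|^2 + 2\|y-Py\|^2$, and combining this with the one-sided inequality $\|P(x-y)\|^2 \leq \|x-Py\|^2$, as you state, yields $\|x-Py\|^2 + 2\|x-Px\|^2 + 2\|y-Py\|^2$ --- coefficient $2$ rather than $1$ on $\|x-Px\|^2$, which is weaker than the claimed bound. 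The fix is already contained in your part (a): the decomposition there is an exact identity, so $\|P(x-y)\|^2 = \|x-Py\|^2 - \|x-Px\|^2$, and substituting this equality cancels one copy of $\|x-Px\|^2$ and gives precisely $\|x-Py\|^2 + \|x-Px\|^2 + 2\|y-Py\|^2$. With that one-line correction your argument is complete; as a side remark, even the weaker constant would have sufficed for the application in the proof of Theorem \ref{sakai theorem}, since Lemma \ref{key result sakai} only requires \emph{some} finite constant $A$.
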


\begin{proof}
For (a), we note that $x-Px \perp P(x-y)$, and so Lemma \ref{foundational}\ref{pythagoras} gives
\begin{align*}
\|x-Py\|^2 &= \|x-Px+P(x-y)\|^2 \\
& = \|x-Px\|^2 + \|P(x-y)\|^2 \\
&\leq \|x-Px\|^2 + \|x-y\|^2.
\end{align*}
For (b), we begin by noting that since $Px,Py \perp y-Py$, we have
\begin{align*}
\langle x-Py,y-Py \rangle &= \langle x,y-Py \rangle = \langle x-Px,y-Py \rangle.
\end{align*}
Therefore, appealing to the Cauchy-Schwartz inequality and noting that $2ab \leq a^2+b^2$, we have
\begin{align*}
\|x-y\|^2 &= \|x-Py-(y-Py))\|^2 \\
&\leq \|x-Py\|^2 + \|y-Py\|^2 + 2|\langle x-Py,y-Py \rangle| \\
&= \|x-Py\|^2 + \|y-Py\|^2 + 2|\langle x-Px,y-Py \rangle| \\
&\leq \|x-Py\|^2 + \|y-Py\|^2 + 2\|x-Px\|\|y-Py\| \\
&\leq \|x-Py\|^2 + \|y-Py\|^2 + \|x-Px\|^2 + \|y-Py\|^2 \\
&\leq \|x-Py\|^2 + \|x-Px\|^2 + 2\|y-Py\|^2. \qedhere
\end{align*}
\end{proof}

\subsubsection{Two statements implying Sakai's theorem}
There are two steps left in the proof. We will first find two statements from which Theorem \ref{sakai theorem} (Sakai) follows, and then we will show these statements are true. In this subsection, we do the former.

We begin by defining $I = I(s) = \sup_{1\leq j\leq J} I(s,j)$, and \[S_l = \sum_{k=l}^{l+I-2} \|x_{k+1} - x_k\|^2.\]
By Lemma \ref{key result sakai 3}, to prove Theorem \ref{sakai theorem} (Sakai), it suffices to show that
\begin{equation} \label{want to prove sakai}
\|x_n - x_m\|^2 \leq \big{(}(I(s)-1)(I(s)-2)+3\big{)} \sum_{k=m}^{n-1} \|x_{k+1} - x_k\|^2, \quad n>m\geq1. \, \, \,
\end{equation}
Let $n \geq m \geq 1$. Suppose first that \[ \textnormal{(a)} \quad n-m \leq 2I-3. \] Then by Lemma \ref{easy inequality sakai} (with $N=n-m$, $y_1 = x_m$, $y_2 = x_{m+1}$,$\dots$, $y_N=x_{n-1}$, $y_{N+1} = x_n$), we have

\begin{equation} \label{almost there}
\|x_n-x_m\|^2 \leq (n-m)\sum_{k=m}^{n-1} \|x_{k+1} - x_k\|^2.
\end{equation}
Since $n-m \leq 2I-3 \leq (I-1)(I-2) + 3$, we see that (\ref{want to prove sakai}) holds.

We may therefore assume that \[\textnormal{(b)} \quad n-m \geq 2I-2. \] We now note that in order to show (\ref{want to prove sakai}) holds, it is sufficient to prove the following statements.

\noindent (i) If $S_{n-I+1} \leq S_m$, then
\begin{equation*}
\|x_n - x_m\|^2 \leq \|x_n - x_{m+I-1}\|^2 + \big{(}(I-1)(I-2) + 3\big{)}S_m.
\end{equation*}
(ii) If $S_m < S_{n-I+1}$, then
\begin{equation*}
\|x_n - x_m\|^2 \leq \|x_{n-I+1} - x_m\|^2 + \big{(}(I-1)(I-2) + 3\big{)}S_{n-I+1}.
\end{equation*}
Indeed, if we have (i) and (ii), then we can apply them repeatedly (whichever of the two we are able to apply at each step), until we are in case (a). For example, suppose we have just applied (i) to $\|x_n - x_m\|$. Then either $n-(m+I-1) \geq 2I-2$, so that we can apply one of (i) or (ii) to $\|x_n - x_{m+I-1}\|^2$, or $n-(m+I-1) \leq 2I-2$, so that we are in case (a) and we get, as in (\ref{almost there}),
\begin{equation*}
\|x_n - x_{m+I-1}\|^2 \leq (n-(m+I-1))\sum_{k=m+I-1}^{n-1} \|x_{k+1} - x_k\|^2. 
\end{equation*}
After repeated applications of (i) or (ii), and once we are in case (a) so that we have a similar inequality to (\ref{almost there}), we obtain (\ref{want to prove sakai}), and we are done.

\subsubsection{Proving Sakai's theorem}

In the last section, we showed that in order to prove Theorem \ref{sakai theorem} (Sakai), it suffices to prove that for $n-m \leq 2I-2$, the following two statements hold.

\noindent (i) If $S_{n-I+1} \leq S_m$, then
\begin{equation*}
\|x_n - x_m\|^2 \leq \|x_n - x_{m+I-1}\|^2 + \big{(}(I-1)(I-2) + 3\big{)}S_m.
\end{equation*}
(ii) If $S_m < S_{n-I+1}$, then
\begin{equation*}
\|x_n - x_m\|^2 \leq \|x_{n-I+1} - x_m\|^2 + \big{(}(I-1)(I-2) + 3\big{)}S_{n-I+1}.
\end{equation*}

\begin{proof}[Proof of (i)] For $k \in \{m,m+1, \dots, m+I-2\}$, applying Lemma \ref{small lemma}\ref{small lemma b} with $x=x_n,y=x_k$, $P=P_{j_{k+1}}$, and setting $p_{j_{k+1}}=P_{j_{k+1}}x_n$, we have
\begin{equation*}
\|x_n-x_k\|^2 \leq \|x_n-x_{k+1}\|^2 + \|x_n - p_{j_{k+1}}\|^2 + 2\|x_{k+1} - x_k\|^2.
\end{equation*}
Applying this inequality one by one to each of $\|x_n-x_m\|^2$, $\|x_n-x_{m+1}\|^2,\dots,$ $\|x_n-x_{m+I-2}\|^2$, we obtain
\begin{equation} \label{intermediate inequality}
\begin{aligned}
\|x_n-x_m\|^2 &\leq \|x_n-x_{m+1}\|^2 + \|x_n - p_{j_{m+1}}\|^2 + 2\|x_{m+1} - x_m\|^2 \\ 
&\leq \dots \leq \|x_n-x_{m+I-1}\|^2 + \sum_{k=m}^{m+I-2} \|x_n - p_{j_{k+1}}\|^2 + 2S_m.
\end{aligned}
\end{equation}
The set $\{ x_{n-I+1},x_{n-I+2},\dots , x_n\}$ consists of $I$ consecutive elements of the sequence $(x_n)$. So by definition of $I$, at least one of these elements, say $x_h$, belongs to $M_{j_{k+1}}$. We choose the largest such number $h$, and denote it by $h(j_{k+1})$.

Since $p_{j_{k+1}}=P_{j_{k+1}}x_n$ is the projection of $x_n$ onto $M_{j_{k+1}}$, and $x_{h(j_{k+1})} \in M_{j_{k+1}}$, Lemma \ref{foundational}\ref{projection is onto closest point} gives
\begin{equation} \label{closest point inequality}
\|x_n - p_{j_{k+1}}\|^2 \leq \|x_n - x_{h(j_{k+1})}\|^2.
\end{equation}
Applying Lemma \ref{easy inequality sakai}, we obtain
\begin{equation} \label{intermediate inequality 2}
\begin{aligned}
\|x_n - x_{h(j_{k+1})}\|^2 & \leq (n-h(j_{k+1})) \sum_{k=h(j_{k+1})}^{n-1} \|x_{k+1} - x_k\|^2 \\
& \leq (n-h(j_{k+1})) S_{n-I+1}.
\end{aligned}
\end{equation}
Since $k \in \{m,\dots,m+I-2\}$, $k$ ranges over $I-1$ consecutive numbers. Therefore, there is some number $a$ in this range such that $M_{j_{a+1}}$ is equal to one of $M_{j_{n-1}}$ or $M_{j_{n}}$. Rephrasing this, there is some $a \in \{m,\dots,m+I-2\}$ for which $n-h(j_{a+1})$ is equal to $0$ or $1$. Since $ 0 \leq n-h(j_{k+1}) \leq I-1$,  we have
\begin{equation} \label{intermediate inequality 3}
\begin{aligned}
&\sum_{k=m}^{m+I-2} n-h(j_{k+1}) \\
&\leq \sum_{k=m}^{a-1} n-h(j_{k+1}) + \Big{(}n-h(j_{a+1})\Big{)} + \sum_{k=a+1}^{m+I-2} n-h(j_{k+1}) \\
& \leq \Big{(}\sum_{k=m}^{a-1} I-1 \Big{)} + 1 + \Big{(}\sum_{k=a+1}^{m+I-2} I-1 \Big{)} \\
& \leq (I-1)(I-2) + 1. 
\end{aligned}
\end{equation}
Hence applying (\ref{closest point inequality}), (\ref{intermediate inequality 2}) and (\ref{intermediate inequality 3}) in that order, and recalling that $S_{n-I+1} \leq S_m$ (by assumption), we have

\begin{equation} \label{intermediate inequality 4}
\begin{aligned}
\sum_{k=m}^{m+I-2} \|x_n - p_{j_{k+1}}\|^2 + 2S_m & \leq \sum_{k=m}^{m+I-2} \|x_n - x_{h(j_{k+1})}\|^2 + 2S_m \\
& \leq \sum_{k=m}^{m+I-2} (n-h(j_{k+1})) S_{n-I+1} + 2S_m  \\
& \leq \big{(}(I-1)(I-2) + 1\big{)}S_{n-I+1} + 2S_m \\
& \leq \big{(}(I-1)(I-2) + 3\big{)}S_m.
\end{aligned}
\end{equation}
So finally, by (\ref{intermediate inequality}) and (\ref{intermediate inequality 4}), we have
\begin{equation*}
\begin{aligned}
\|x_n-x_m\|^2 & \leq \|x_n-x_{m+I-1}\|^2 + \sum_{k=m}^{m+I-2} \|x_n - p_{j_{k+1}}\|^2 + 2S_m \\
& \leq \|x_n-x_{m+I-1}\|^2 + \big{(}(I-1)(I-2) + 3\big{)}S_m,
\end{aligned}
\end{equation*}
and so (i) is proved.
\end{proof} 

\begin{proof}[Proof of (ii)]
For each $k \in \{n-I+1,\dots, n-1\}$, applying Lemma \ref{small lemma}\ref{small lemma a} with $x=x_m$, $y=x_k$, $P=P_{j_{k+1}}$, and setting $p_{j_{k+1}}=P_{j_{k+1}}x_m$, we have
\begin{equation*}
\|x_m - x_n\|^2 \leq \|x_m - x_k\|^2 + \|x_m - p_{j_{k+1}} \|^2. 
\end{equation*}
Applying these inequalities repeatedly (as in the proof of (i)), we obtain
\begin{equation} \label{intermediate inequality b1}
\|x_m - x_n\|^2 \leq \|x_m - x_{n-I+1}\|^2 + \sum_{k=n-I+1}^{n-1} \|x_m - p_{j_{k+1}}\|^2.
\end{equation}
An argument similar to  (\ref{intermediate inequality 4}) shows that we have
\begin{equation} \label{intermediate inequality b2}
\sum_{k=n-I+1}^{n-1} \|x_m - p_{j_{k+1}}\|^2 \leq \{(I-1)(I-2) + 1\}S_m.
\end{equation}
Combining (\ref{intermediate inequality b1}) and (\ref{intermediate inequality b2}), and recalling that $S_m<S_{n-I+1}$, we obtain (ii) and so the proof is complete.
\end{proof}

\subsubsection{Concluding remarks} \label{concluding remarks}

Sakai's paper ends by posing several open questions about the convergence of sequences of projections. He also mentions a few simple results. In this subsection we briefly discuss some of the questions and ideas raised by Sakai.

The first question he poses is the following. For an arbitrary sequence $s$, does (\ref{key inequality}) always hold with $A=J-1$?

We remark that it appears this question has not yet been addressed in the literature. Perhaps this is because it can be easily resolved given a result by Kopeck\'a and Paszkiewicz \cite{KoPa17} (stated later as Theorem \ref{big result kopecka}). We resolve Sakai's question in Corollary \ref{Sakai open question}, where we find a sequence $s$ for which (\ref{key inequality}) does not hold for any constant $A$.

Another interesting question posed by Sakai is whether we still have convergence in norm for the case that $J= \infty$ and $(j_n)$ is quasiperiodic. It is worth noting that quasiperiodic sequences covering every integer do exist. We offer the following example: 
\begin{equation*}
1,2,1,3,1,2,1,4,1,2,1,3,1,2,1,5,\dots 
\end{equation*}
That is, the sequence which has $1$ every $2$\textsuperscript{nd} number, $2$ every $4\textsuperscript{th}$ number, \dots, $n$ every $2^n$-th number, etc. 

More generally, for the case $J=\infty$, a quasiperiodic sequence always has $I = \sup_{j\in \mathbb{N}}I(s,j) = \infty$, and so the argument in the proof of Theorem \ref{sakai theorem} (Sakai) does not extend to this case. However, even when $J= \infty$, we can still show convergence for special cases. 

For example, suppose the sequence of closed subspaces $(M_j)_{j\geq1}$ is monotonically decreasing (i.e. $M_1\supseteq M_2 \supseteq M_3 \supseteq \dots $). Then we have
\begin{equation} \label{commuting projections}
P_bP_a = P_aP_b = P_b, \quad b\geq a \geq1.
\end{equation}
Consider the sequence $s$ given by $j_n = n$ for every $n \in \mathbb{N}$. Applying (\ref{commuting projections}) and Lemma \ref{foundational}\ref{projection equality} for the first equality gives that for $n\geq m \geq 1$,
\begin{equation*}
\begin{aligned}
\|x_n - x_m\|^2 &= \|x_m\|^2 - \|x_n\|^2 \\
&= \|x_m - x_{m+1} + x_{m+1} - x_{m+2} + \dots  - x_n + x_n\|^2 - \|x_n\|^2 \\
& \leq \|x_m - x_{m+1}\|^2 + \dots  + \|x_{n-1} - x_n\|^2 + \|x_n\|^2 - \|x_n\|^2 \\
& = \sum_{k=m}^{n-1} \|x_{k+1} - x_k \|^2.
\end{aligned}
\end{equation*}
Hence (\ref{key inequality}) holds with $A=1$, and so $(x_n)$ convergences in norm.

Finally, we note that in his paper, Sakai observed that if at least one of the $J$ subspaces is finite-dimensional, then for any sequence $(j_n)$, we have that $(x_n)$ converges in norm. To prove this, we will need to make use of a result by Amemiya and Ando, to be stated later as Theorem \ref{amemiya}. We therefore defer this result (and its proof) to Section \ref{amemiya section}, where we state it as Lemma \ref{Sakai mistake}.

\newpage

\section{Weak convergence} \label{amemiya section}

All of our results so far have had some restriction on the sequence of projections, or on the Hilbert space $H$. It is natural to ask what happens if we do not have such restrictions. In 1965, Amemiya and Ando proved that for any sequence of projections, we always have weak convergence \cite{AmAn65}.

\begin{theorem} \label{amemiya}
Let $H$ be a real or complex Hilbert space, $J \geq 2$ an integer, and $M_1,\dots ,M_J$ a family of closed subspaces in $H$. For each $j \in \{1,\dots ,J\}$, let $P_j$ be the orthogonal projection onto the closed subspace $M_j$, and let $(j_n)$ be any sequence taking values in $\{1,\dots,J\}$. Let $x_0 \in H$ be a vector, and let $(x_n)$ be the sequence defined by
\begin{equation*}
x_n = P_{j_n}x_{n-1}, \quad n\geq1.
\end{equation*}
Then $x_n$ converges weakly as $n\to \infty$.
\end{theorem}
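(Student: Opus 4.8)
The plan is to study the weak cluster points of $(x_n)$ and to show there is exactly one; since a bounded sequence in a Hilbert space converges weakly precisely when it has a unique weak cluster point (a bounded sequence has weakly convergent subsequences, and any two subsequential weak limits must coincide), this suffices.

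First I would collect the basic estimates. As each $P_{j_n}$ is a contraction, $(\|x_n\|)$ is non-increasing and bounded below, hence converges to some $d\geq 0$; in particular $(x_n)$ is bounded, so it has weak cluster points. Applying Lemma~\ref{foundational}\ref{projection equality} to $x_{n+1}=P_{j_{n+1}}x_n$ gives $\|x_n-x_{n+1}\|^2=\|x_n\|^2-\|x_{n+1}\|^2$, so that $\sum_{n=0}^{\infty}\|x_n-x_{n+1}\|^2=\|x_0\|^2-d^2<\infty$; in particular $\|x_n-x_{n+1}\|\to 0$, and hence $\|x_{n+L}-x_n\|\to 0$ as $n\to\infty$ for every fixed $L$. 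Note also that whenever $j_{n+1}=j$ we have $\|(I-P_j)x_n\|^2=\|x_n\|^2-\|x_{n+1}\|^2$, which tends to $0$ along such $n$.

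Next, the reduction that makes uniqueness easy. \textbf{Claim.} Every weak cluster point $y$ of $(x_n)$ satisfies $P_jy=y$ for every $j\in\{1,\dots,J\}$ that occurs infinitely often in $(j_n)$. Granting this, choose $N_0$ beyond which $(j_n)$ takes only such values. For a cluster point $y$ and $n\geq N_0$ we have $\|x_{n+1}-y\|=\|P_{j_{n+1}}(x_n-y)\|\leq\|x_n-y\|$, so $(\|x_n-y\|)_{n\geq N_0}$ is non-increasing and therefore convergent. If $y$ and $z$ are two weak cluster points, then both $(\|x_n-y\|^2)$ and $(\|x_n-z\|^2)$ converge, hence so does their difference $\|y\|^2-\|z\|^2-2\operatorname{Re}\langle x_n,y-z\rangle$; thus $\operatorname{Re}\langle x_n,y-z\rangle$ converges. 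Evaluating this limit along a subsequence tending weakly to $y$ and along one tending weakly to $z$ gives $\operatorname{Re}\langle y,y-z\rangle=\operatorname{Re}\langle z,y-z\rangle$, i.e. $\|y-z\|^2=0$, so $y=z$. Hence $(x_n)$ converges weakly. (When every index occurs infinitely often, Lemma~\ref{key result sakai 2} then identifies the limit as $P_Mx_0$.)

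It remains to prove the Claim, which is the heart of the argument. Let $y$ be a weak cluster point, say $x_{n_k}\rightharpoonup y$. Passing to a subsequence we may assume that for each fixed $l\geq 1$ the index $j_{n_k+l}$ is independent of $k$, say $j_{n_k+l}=j^{(l)}$; then $x_{n_k+l}=W_lx_{n_k}$ where $W_l=P_{j^{(l)}}\cdots P_{j^{(1)}}$ is a fixed bounded operator. Since bounded operators are weakly continuous and $\|x_{n_k+l}-x_{n_k+l-1}\|\to 0$, an induction on $l$ gives $x_{n_k+l}\rightharpoonup y$ and $P_{j^{(l)}}y=y$ for every $l\geq 1$. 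The remaining — and main — difficulty is that an index $j$ occurring infinitely often in $(j_n)$ need not occur in the limiting subword $(j^{(l)})_{l\geq 1}$ at all; this is precisely the phenomenon that forces quasiperiodicity to be a genuine hypothesis in Theorem~\ref{sakai theorem} (Sakai). To overcome it one must use the projection structure rather than merely the combinatorics of $(j_n)$ — in particular that $\|(I-P_j)x_n\|\to 0$ along the infinite set of return times $\{n:j_{n+1}=j\}$ and that $n\mapsto\|(I-P_j)x_n\|$ has vanishing increments, since $\bigl|\,\|(I-P_j)x_{n+1}\|-\|(I-P_j)x_n\|\,\bigr|\leq\|x_{n+1}-x_n\|$ — together with a compactness argument on the weakly compact set of cluster points; for instance, one attempts to show that a cluster point of minimal norm must be fixed by every such $P_j$. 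I expect the careful execution of this final step to be the main obstacle, and the place where restricting to projections, rather than the general contractions treated by Amemiya and Ando, yields a genuine simplification.
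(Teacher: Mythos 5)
There is a genuine gap: your entire argument rests on the \textbf{Claim} that every weak cluster point $y$ of $(x_n)$ satisfies $P_jy=y$ for every index $j$ occurring infinitely often in $(j_n)$, and you do not prove it. The part you do establish --- that $y$ is fixed by the projections appearing in the limiting subword $(j^{(l)})_{l\geq1}$ obtained by a diagonal argument along $x_{n_k}\rightharpoonup y$ --- is correct but, as you yourself observe, insufficient, since an index recurring in $(j_n)$ may never appear at any bounded distance after the times $n_k$. Neither of the routes you sketch closes this gap. The observation that $\|(I-P_j)x_n\|\to0$ along the return times $\{n: j_{n+1}=j\}$ and that $n\mapsto\|(I-P_j)x_n\|$ has vanishing increments does not force $\|(I-P_j)x_{n_k}\|\to0$, because the gaps between return times are unbounded and the quantity can climb back up in between (this is exactly the phenomenon that makes the non-quasiperiodic case hard). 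The ``minimal-norm cluster point'' idea is not developed at all. Since the Claim is, modulo Lemma \ref{key result sakai 2}, essentially equivalent to the theorem itself, what you have is a correct reduction of the theorem to an unproved statement of comparable depth, together with a correct Opial-type uniqueness argument conditional on that statement.

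For comparison, the paper avoids cluster-point analysis entirely and instead proves a quantitative, \emph{uniform-over-words} estimate (Lemma \ref{important neighbourhood lemma}): for every basic weak neighbourhood $U$ of $0$ there is an $\varepsilon>0$, depending only on $U$ and on the number of distinct projections involved (not on the particular word $S$), such that $\|Sx\|\geq(1-\varepsilon)\|x\|$ forces $(I-S)x\in U$. This is proved by induction on the number of generators, using the commuting projections $Q_j$ onto $\ker(I-P_j\dots P_1)$ and a weak-compactness argument (Lemma \ref{new neighbourhood}). Combined with the convergence of $\|x_n\|$, it shows directly that $x_m-x_n$ eventually lies in any prescribed weak neighbourhood, and weak convergence follows from the existence of a weakly convergent subsequence. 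That uniformity in the word $S$ is precisely the ingredient your approach is missing; supplying it is the real content of the theorem, so the proposal as it stands is incomplete.
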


In fact, Amemiya and Ando proved a slightly stronger result about contractions \cite{AmAn65}, of which Theorem \ref{amemiya} is a corollary. By proving Theorem \ref{amemiya} directly, we are able to simplify their proof. Additionally, by including details not originally present in \cite{AmAn65}, we aim to to make the proof easier to follow.

We present our proof through a series of four lemmas.
For simplicity, we write `neighbourhood' to mean a basic weakly open neighbourhood of $0$ in $H$. We also write $B_H$ for the closed unit ball in $H$.

\begin{lemma} \label{neighbourhood}
For any neighbourhood $U$, and any $j \in \{1,\dots,J\}$, there is an $\varepsilon = \varepsilon(j) > 0 $ such that for $x\in B_H$
\begin{equation*}
\|P_jx\| \geq 1-\varepsilon \implies (I-P_j)x \in U.
\end{equation*}
\end{lemma}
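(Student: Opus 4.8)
The plan is to unwind the definition of a basic weakly open neighbourhood of $0$ and then reduce everything to a single quantitative estimate: namely, that $\|P_j x\|$ being close to $1$ forces $\|(I-P_j)x\|$ to be small, which in turn forces $(I-P_j)x$ into $U$.

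First I would write $U$ in its standard form. Every basic weakly open neighbourhood of $0$ in $H$ is of the shape $U = \{\, y \in H : |\langle y, z_i \rangle| < \delta \text{ for each } i \in \{1,\dots,k\} \,\}$ for some finite collection $z_1,\dots,z_k \in H$ and some $\delta > 0$. (If $k=0$ then $U = H$ and there is nothing to prove, so assume $k \geq 1$.) Set $M = 1 + \max_{1 \leq i \leq k} \|z_i\|$, so that $M \geq 1$ and $\|z_i\| \leq M$ for all $i$; this handles the degenerate possibility that some or all of the $z_i$ vanish.

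Next comes the key observation, which is just the Pythagorean identity for projections. Suppose $x \in B_H$ and $\|P_j x\| \geq 1 - \varepsilon$ for some $\varepsilon \in (0,1]$. By Lemma \ref{foundational}\ref{projection equality}, $\|(I-P_j)x\|^2 = \|x\|^2 - \|P_j x\|^2 \leq 1 - (1-\varepsilon)^2 = 2\varepsilon - \varepsilon^2 \leq 2\varepsilon$, so $\|(I-P_j)x\| \leq \sqrt{2\varepsilon}$. Applying the Cauchy--Schwarz inequality then gives $|\langle (I-P_j)x, z_i \rangle| \leq \|(I-P_j)x\|\,\|z_i\| \leq M\sqrt{2\varepsilon}$ for each $i \in \{1,\dots,k\}$. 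Hence it suffices to pick $\varepsilon = \varepsilon(j) = \min\{1, \delta^2/(2M^2)\}$, which makes $M\sqrt{2\varepsilon} < \delta$ and therefore puts $(I-P_j)x$ in $U$. (Note this $\varepsilon$ in fact does not depend on $j$, which is of course stronger than required.)

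I do not expect a genuine obstacle here: the entire content is that norm convergence to $0$ implies weak convergence to $0$, made quantitative, combined with the elementary identity $\|x - P_j x\|^2 = \|x\|^2 - \|P_j x\|^2$. The only points requiring a little care are purely bookkeeping — writing $U$ in coordinate form, and choosing $M \geq 1$ so that the final choice of $\varepsilon$ is well defined even when some $z_i = 0$.
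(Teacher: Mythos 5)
Your proof is correct and follows essentially the same route as the paper's: write the basic weak neighbourhood in coordinate form, apply the identity $\|(I-P_j)x\|^2 = \|x\|^2 - \|P_jx\|^2$ to bound $\|(I-P_j)x\|$ by $\sqrt{2\varepsilon}$, and finish with Cauchy--Schwarz and an explicit choice of $\varepsilon$. Your extra care with the degenerate case where some $z_i = 0$ (via $M = 1 + \max_i \|z_i\|$) is a small tidying-up of a point the paper glosses over, but the argument is the same.
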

\begin{proof}
Let $U$ be a neighbourhood, and $x \in B_H$. Then there are $y_1,\dots ,y_r \in H$ and $\delta >0$, such that 
\begin{equation*}
U = \{x \in H: |\langle x,y_k\rangle| < \delta \textnormal{ for each }  1\leq k \leq r \}. 
\end{equation*}
Let $\varepsilon$ be small enough such that $0 < \eta \sqrt{2\varepsilon - \varepsilon^2}<\delta$, where $\eta  = \max \{\|y_k\| : 1\leq k \leq r \}$. For example, $\varepsilon = \min \{1, \frac{\delta^2}{2\eta ^2}\}$ works. Suppose that $\|P_jx\| \geq 1-\varepsilon$. Then by Lemma \ref{foundational}\ref{projection equality},
\begin{equation*}
\begin{aligned}
\|(I-P_j)x\|^2 = \|x-P_jx\|^2 = \|x\|^2 - \|P_jx\|^2 \leq 1 - (1-\varepsilon)^2 = 2\varepsilon - \varepsilon^2.
\end{aligned}
\end{equation*}
Hence by the Cauchy-Schwartz inequality, for any $k \in \{1,\dots ,r\}$,
\begin{equation*}
|\langle (I-P_j)x,y_k \rangle| \leq \|(I-P_j)x\| \|y_k\| \leq \eta \sqrt{2\varepsilon - \varepsilon^2} < \delta,
\end{equation*}
and thus $(I-P_j)x \in U$.
\end{proof}

\begin{lemma} \label{commutes}
Let $Q_j$ be the orthogonal projection onto $Ker(I-P_j\dots  P_1)$. Then for each $k \in \{1,\dots ,j\}$, $Q_j$ and $P_k$ commute.
\end{lemma}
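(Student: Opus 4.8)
The plan is to reduce the statement to a concrete description of the subspace onto which $Q_j$ projects. By Lemma~\ref{foundational 2}\ref{useful for amemiya} we have
\[
\ker(I - P_j \dots P_1) = \bigcap_{k=1}^{j} \ker(I-P_k) = \bigcap_{k=1}^{j} M_k ,
\]
the last equality because $\ker(I-P_k) = \{x \in H : P_k x = x\} = M_k$ (Lemma~\ref{foundational}\ref{projection norm}). Writing $N_j = \bigcap_{k=1}^{j} M_k$, which is a closed subspace of $H$, we thus have $Q_j = P_{N_j}$, so it suffices to show that $P_{N_j}$ commutes with each $P_k$ for $k \in \{1,\dots,j\}$.

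The crucial observation is the inclusion $N_j \subseteq M_k$, valid for every $k \leq j$. I would then invoke (after a one-line proof, or by treating it as an elementary fact in the spirit of the preliminaries) the statement that for closed subspaces $Y \subseteq Z$ of $H$ one has $P_Y P_Z = P_Z P_Y = P_Y$: since $\ran P_Y = Y \subseteq Z$ and $P_Z$ is the identity on $Z$, we get $P_Z P_Y = P_Y$; and since $Y \subseteq Z$ implies $Z^\perp \subseteq Y^\perp$, writing $x = P_Z x + (x - P_Z x)$ with $x - P_Z x \in Z^\perp \subseteq Y^\perp$ gives $P_Y P_Z x = P_Y x$. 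Applying this with $Y = N_j$ and $Z = M_k$ yields $P_k Q_j = Q_j = Q_j P_k$, which is in fact stronger than the assertion that $Q_j$ and $P_k$ commute.

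I do not anticipate a genuine obstacle here: the argument is purely a matter of unwinding the definition of $Q_j$ through Lemma~\ref{foundational 2}\ref{useful for amemiya} and then using the behaviour of orthogonal projections onto nested subspaces. The only points demanding a little care are applying the kernel identity with the correct index range $k \in \{1,\dots,j\}$, and remembering that passing to orthogonal complements reverses the inclusion.
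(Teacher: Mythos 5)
Your proof is correct and follows essentially the same route as the paper: both invoke Lemma~\ref{foundational 2}\ref{useful for amemiya} to see that $Q_j$ projects onto a subspace contained in each $M_k$, whence $P_kQ_j = Q_j$. The only (minor) difference is in obtaining $Q_jP_k = Q_j$: the paper simply takes adjoints of $P_kQ_j = Q_j$ using self-adjointness of projections, while you use the orthogonal decomposition $x = P_kx + (x - P_kx)$ together with $M_k^\perp \subseteq N_j^\perp$ --- both are valid one-line arguments.
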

\begin{proof}
By Lemma \ref{foundational 2}\ref{useful for amemiya}, for each $x\in H$, \[Q_j x \in \ker(I-P_j\dots P_1) = \bigcap_{k=1}^j\ker(I-P_k).\] Therefore for each $k \in \{1,\dots ,j\}$, we have $(I-P_k)Q_jx = 0$, and so $P_kQ_jx = Q_jx$. Hence, 
\begin{equation} \label{equality P Q}
P_kQ_j = Q_j.
\end{equation}
Since $P_k$ and $Q_j$ are self-adjoint, \[Q_j = (Q_j)^* = (P_kQ_j)^* = (Q_j)^*(P_k)^* = Q_jP_k,\] and so $P_kQ_j = Q_j = Q_jP_k$.
\end{proof}

\begin{lemma} \label{new neighbourhood}
Let $R_j = I - Q_j$. Then for any neighbourhood $U$, there is another neighbourhood $V$ such that for $x \in B_H$,
\begin{equation*}
(I-P_k)x \in V, \quad k \in \{1,\dots ,j\} \implies R_jx \in U.
\end{equation*}
\end{lemma}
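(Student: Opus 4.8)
The plan is to reduce the weak condition to a finite list of inner-product bounds and then push the projections onto the test vectors using self-adjointness, so that no compactness argument is needed. First I would unpack the target neighbourhood, writing $U = \{z \in H : |\langle z, w_i \rangle| < \delta \text{ for } 1 \le i \le r\}$ for suitable $w_1,\dots,w_r \in H$ and $\delta > 0$. Since $Q_j$ is an orthogonal projection, $R_j = I - Q_j$ is self-adjoint, so $\langle R_j x, w_i \rangle = \langle x, R_j w_i \rangle$, and it suffices to make each $|\langle x, R_j w_i \rangle|$ small.

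The structural point I would rely on is that each $R_j w_i$ lies in the closed span of the ranges of the operators $I - P_k$. Indeed, by Lemma \ref{foundational 2}\ref{useful for amemiya}, $Q_j$ is the orthogonal projection onto $\bigcap_{k=1}^j \ker(I-P_k) = \bigcap_{k=1}^j M_k$, so $R_j$ is the orthogonal projection onto $\bigl(\bigcap_{k=1}^j M_k\bigr)^\perp$; and this space equals $\overline{\sum_{k=1}^j M_k^\perp} = \overline{\sum_{k=1}^j \ran(I-P_k)}$, the polar form of the elementary identity $(Y_1+Y_2)^\perp = Y_1^\perp \cap Y_2^\perp$. Consequently, given $\varepsilon' > 0$, each $v_i := R_j w_i$ can be approximated in norm to within $\varepsilon'$ by a finite sum $v_i' = \sum_{k=1}^j (I-P_k)u_{i,k}$ with $u_{i,k} \in H$.

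It then remains to define $V$ so that it controls the finitely many vectors $u_{i,k}$. I would take $V = \{z \in H : |\langle z, u_{i,k} \rangle| < \delta' \text{ for all } 1\le i\le r,\ 1\le k\le j\}$, choosing $\varepsilon' = \delta/2$ and $\delta' = \delta/(2j)$ so that $j\delta' + \varepsilon' = \delta$. If $x \in B_H$ satisfies $(I-P_k)x \in V$ for every $k \in \{1,\dots,j\}$, then self-adjointness of $I-P_k$ gives $\langle x, v_i'\rangle = \sum_{k=1}^j \langle (I-P_k)x, u_{i,k}\rangle$, hence $|\langle x, v_i'\rangle| < j\delta'$, while $|\langle x, v_i - v_i'\rangle| \le \|x\|\,\varepsilon' \le \varepsilon'$ by Cauchy--Schwartz; adding these yields $|\langle R_j x, w_i\rangle| = |\langle x, v_i\rangle| < \delta$ for each $i$, that is, $R_j x \in U$. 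Note that the choice of $V$ depends only on $U$ and the fixed operators, not on $x$.

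The one point needing a moment's care is the identification $\ran(R_j) = \overline{\sum_{k=1}^j \ran(I-P_k)}$, which I would justify by observing that $\bigl(\bigcap_k M_k\bigr)^\perp$ and $\overline{\sum_k M_k^\perp}$ are closed subspaces having the same orthogonal complement $\bigcap_k M_k$, and hence coincide. Everything else is bookkeeping with the triangle inequality and Cauchy--Schwartz; in particular, in contrast to the alternative route via weak compactness of $B_H$ and a net argument, this approach stays entirely elementary.
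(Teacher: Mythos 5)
Your argument is correct, and it takes a genuinely different route from the one in the paper. The paper's proof introduces the map $g\colon R_j(H)\to H^j$, $g(R_jx)=\big((I-P_1)x,\dots,(I-P_j)x\big)$, checks that it is linear, bounded and injective, and then invokes weak compactness of $B_H$ (via reflexivity) together with the fact that a continuous bijection from a compact space onto a Hausdorff space is a homeomorphism; continuity of $f^{-1}$ at the origin then yields the neighbourhood $V$. Your proof replaces this soft topological argument with an explicit duality computation: since $Q_j$ projects onto $\bigcap_{k\leq j}M_k$ by Lemma \ref{foundational 2}\ref{useful for amemiya}, $R_j$ is the orthogonal projection onto $\big(\bigcap_{k\leq j}M_k\big)^\perp=\overline{\sum_{k\leq j}\ran(I-P_k)}$, so each test vector $R_jw_i$ is norm-approximated by a finite sum $\sum_k(I-P_k)u_{i,k}$, and self-adjointness of $R_j$ and of the $I-P_k$ converts the desired bound on $|\langle R_jx,w_i\rangle|$ into bounds on $|\langle (I-P_k)x,u_{i,k}\rangle|$, which is exactly what membership of $(I-P_k)x$ in your $V$ provides; the bookkeeping $j\delta'+\varepsilon'=\delta$ with $\|x\|\leq 1$ closes the estimate. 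The two proofs buy different things: yours is entirely elementary, produces an explicit $V$ from $U$ (so it is in principle quantitative), and avoids reflexivity and compactness altogether; but it leans on the identification of $\ran(R_j)$ with the closure of $\sum_k M_k^\perp$ and on self-adjointness, which are specific to orthogonal projections. The paper's compactness route is less explicit but closer in spirit to the original Amemiya--Ando argument for general contractions, where no such range identification is available. Both are valid here, and your version is arguably the cleaner one for the projection setting this section restricts to.
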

\begin{proof}
Let $H^j$ be the Cartesian product $H\times \dots \times H$ ($j$ times), viewed as a Hilbert space with addition and scalar multiplication given by
\begin{equation*}
(u_1,\dots,u_j) + \lambda(v_1,\dots,v_j) = (u_1 + \lambda v_1, \dots, u_1 + \lambda v_j), \quad u_i,v_i \in H, \, \, \lambda \in \mathbb{F},
\end{equation*}
and inner product given by
\begin{equation*}
\big{\langle}(u_1,\dots,u_j),(v_1,\dots,v_j)\big{\rangle}_{H^j} = \langle u_1,v_1\rangle + \dots + \langle u_j,v_j\rangle, \quad u_i,v_i \in H.
\end{equation*}
Therefore the norm on $H^j$ is
\begin{equation*}
\|(u_1,\dots,u_j)\|_{H^j} = \sqrt{\|u_1\|^2 + \dots + \|u_j\|^2}, \quad u_1,\dots,u_j \in H.
\end{equation*}
We consider the map $g\colon R_j(H) \to H^j$ given by \[g(R_jx) = \big{(}(I-P_1)x,\dots ,(I-P_j)x\big{)}, \quad x \in H,\] where both spaces are endowed with their respective weak topologies. Lemma \ref{foundational 2}\ref{useful for amemiya} gives that for any $x\in H$,
\begin{equation*}
\begin{aligned}
(I-P_k)x = 0, \quad k \in \{1,\dots ,j\} &\iff x\in \bigcap_{k=1}^j \ker(I-P_k) \\
&\iff x\in \ker(I-P_j\dots  P_1) \\
&\iff Q_jx = x \\
&\iff R_jx=0.
\end{aligned}
\end{equation*}
The $\impliedby$ implication shows that $g$ is well defined, while the $\implies$ implication shows that $g$ is injective. We now note that (\ref{equality P Q}) gives
\begin{equation*}
(I-P_k)R_j = (I-P_k)(I-Q_j) = I-P_k - Q_j + P_kQ_j = I-P_k - Q_j + Q_j = I-P_k.
\end{equation*}
So given $x \in H$, and noting that $\|I-P_j\|\leq \|I\| + \|P_j\| \leq 2$, we have
\begin{equation*}
\begin{aligned}
\|g(R_jx)\|_{H^j} &= \| \big{(}(I-P_1)x,\dots ,(I-P_j)x\big{)} \|_{H^j} \\ 
&= \|\big{(}(I-P_1)R_jx,\dots ,(I-P_j)R_jx\big{)}\|_{H^j} \\ 
&= \sqrt{\|(I-P_1)R_jx\|^2 + \dots + \|(I-P_j)R_jx\|^2} \\
&\leq \|(I-P_1)R_jx\| + \dots + \|(I-P_j)R_jx\| \\
& \leq 2j\|R_jx\|.
\end{aligned}
\end{equation*}

Therefore $g$ is bounded. It is a simple check to see that $g$ is linear. Hence $g$ is continuous. Let $f$ be the restriction of $g$ to $R_j(B_H)$ (where $R_j(B_H)$ is endowed with the relative weak topology). Then $f$ must also be continuous and injective.

We know that a unit ball in a normed vector space $H$ is compact with respect to the weak topology if and only if $H$ is reflexive. In our case, $H$ is a Hilbert space, so is indeed reflexive. Therefore $B_H$ is weakly compact. Since $R_j$ is continuous, then $R_j(B_H)$ is also weakly compact. The weak topology on any vector space is Hausdorff, so in particular $H^j$ is Hausdorff with respect to the weak topology. Hence $f$ is an injective continuous map from a compact topological space into a Hausdorff space, so that \[\textnormal{ $R_j(B_H)$ and $f(R_j(B_H))$ are homeomorphic}.\] Therefore we can replace the codomain ($H^j$) of $f$ with the image of $f$ (endowed with the relative weak topology), so that $f$ becomes a bijection. In particular, $f^{-1}$ is then continuous at the origin, and hence the claim follows.
\end{proof}

For $j \in\{1,\dots ,J\}$, let $\mathcal{M}_j$ be the collection of maps which are in a free semigroup generated by $j$ of the projections $\{P_1,\dots,P_J\}$. We also set $\mathcal{M}_0 = \{I\}$.

\begin{lemma} \label{important neighbourhood lemma}
Let $U$ be a neighbourhood, and let $S \in \mathcal{M}_j$. There exists a positive number $\varepsilon = \varepsilon(U,j)$ depending only on $U$ and $j$ such that given $x \in B_H$, we have
\begin{equation*}
\|Sx\| \geq 1-\varepsilon \implies (I-S)x \in U.
\end{equation*}
\end{lemma}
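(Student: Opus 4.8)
The plan is to prove the lemma by induction on $j$, the size of the generating set of projections. The base case $j=0$ is immediate: $\mathcal{M}_0=\{I\}$, so $(I-S)x=0$ lies in every neighbourhood and any $\varepsilon$ works (one could equally start at $j=1$, where $S$ is a power of a single $P_k$ and hence equals $P_k$ by idempotency, so that the claim is exactly Lemma \ref{neighbourhood}). The whole point of the inductive step is to use Lemma \ref{new neighbourhood} to pass from "the relevant $(I-P_k)(\cdot)$-increments are weakly small'' to a conclusion about the operators $R_T$ attached to $j$-element subsets $T$, and to use the inductive hypothesis to handle sub-words of $S$ that involve only $j-1$ of the projections.

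For the inductive step I would, \emph{before examining any particular $S$}, manufacture a smaller neighbourhood $V\subseteq U$ that is good simultaneously for all of the finitely many $j$-element subsets $T\subseteq\{1,\dots,J\}$. Writing $Q_T$ for the orthogonal projection onto $\bigcap_{k\in T}M_k$ and $R_T=I-Q_T$ (so $Q_T=Q_j$, $R_T=R_j$ when $T=\{1,\dots,j\}$), Lemma \ref{new neighbourhood} supplies for each $T$ a neighbourhood with the property that $(I-P_k)z\in(\text{it})$ for every $k\in T$ forces $R_Tz\in U$; let $V$ be the (finite) intersection of these, shrunk if necessary so that $V\subseteq U$. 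Next, using the inductive hypothesis, pick $\varepsilon'=\varepsilon(V,j-1)$, valid uniformly for every word in any $j-1$ of the projections, and using Lemma \ref{neighbourhood} pick $\varepsilon''$ so small that $\|P_kx\|\ge 1-\varepsilon''$ forces $(I-P_k)x\in V$ for each individual $k$. Setting $\varepsilon=\min(\varepsilon',\varepsilon'')$ (up to a harmless rescaling) then gives a constant depending only on $U$ and $j$, as required.

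Now take $S\in\mathcal{M}_j$ with letters the subset $T=\{i_1,\dots,i_j\}$ and $x\in B_H$ with $\|Sx\|\ge 1-\varepsilon$; we may assume the word $S=P_{a_1}\cdots P_{a_L}$ is reduced (no two consecutive letters equal) by idempotency. Since each $\|P_{a_\ell}\|\le1$, the norms of the successive partial products $P_{a_\ell}\cdots P_{a_L}x$ decrease from $\|x\|\le1$ down to $\|Sx\|\ge1-\varepsilon$, so every partial product has norm in $[1-\varepsilon,1]$; this is precisely what lets Lemma \ref{neighbourhood} and the inductive hypothesis be applied to single letters and to sub-words of $S$ acting on the relevant partial products. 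Splitting $x=Q_Tx+R_Tx$, equation (\ref{equality P Q}) shows that each $P_k$ with $k\in T$ is the identity on $Q_T(H)$, so $S$ fixes $Q_Tx$, and Lemma \ref{commutes} shows $S$ commutes with $Q_T$; hence $(I-S)x=(I-S)R_Tx$, and it suffices to put this into $U$. For that I would split the reduced word $S$ at the occurrences of a suitably chosen letter into alternating blocks — single projections, and words lying in $\mathcal{M}_{j-1}$ — apply the inductive hypothesis with tolerance $V$ to the block sub-words and Lemma \ref{neighbourhood} to the separating single projections, thereby placing all the relevant $(I-P_k)(\cdot)$-increments in $V$, and finally invoke the defining property of $V$ coming from Lemma \ref{new neighbourhood} (for the subset $T$) to land in $U$.

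The main obstacle is that the word representing $S$ has unbounded length, so there is no bound on the number of elementary steps and one cannot simply sum the per-step displacements in norm; this is exactly why the lemma only asserts weak smallness of $(I-S)x$ and why Lemmas \ref{neighbourhood} and \ref{new neighbourhood} are phrased as they are. The real work is therefore the bookkeeping: choosing $V$, and hence $\varepsilon$, uniformly \emph{in advance} for all $j$-subsets and all words; carrying out the decomposition of an arbitrary reduced word in $j$ letters into single projections and $(j-1)$-letter pieces; and verifying that reassembling the blocks requires no further $\varepsilon$-shrink depending on the number of blocks or on the length of $S$. Once that is arranged, the three earlier lemmas fit together and the conclusion follows.
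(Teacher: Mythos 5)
Your setup is exactly right and matches the paper's: induction on $j$, with the tolerance $\varepsilon$ chosen in advance of $S$ (via Lemma \ref{new neighbourhood} to get $V$ from $U$, then the inductive hypothesis and Lemma \ref{neighbourhood} to get $\varepsilon$ from $V$), the observation that all partial products of $S$ applied to $x$ have norm at least $1-\varepsilon$, and the reduction $(I-S)x=(I-S)R_jx=R_j(I-S)x$ via Lemma \ref{commutes} and (\ref{equality P Q}). But the decisive step is exactly the one you flag as ``the real work'' and then leave unarranged: how to place $(I-P_k)x$ and $(I-P_k)Sx$ (for every letter $k$ of $S$) into a fixed multiple of $V$ using only a \emph{bounded} number of small perturbations, independent of $|S|$. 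Your plan of splitting the reduced word at \emph{all} occurrences of a chosen letter into alternating blocks produces a number of blocks growing with $|S|$, and summing that many $W$-increments would force $W$ (hence $\varepsilon$) to depend on the word length, which is precisely what the lemma forbids. As written, the proposal does not close this loop.

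The missing idea is that only the \emph{first} and \emph{last} occurrence of each letter matter. Since $S\in\mathcal{M}_j\setminus\mathcal{M}_{j-1}$, for each $k$ one can write $S=T_1P_kT_2=T_3P_kT_4$ where $T_4$ (the initial segment, applied first) and $T_1$ (the final segment, applied last) contain no $P_k$ and hence lie in $\mathcal{M}_{j-1}$, so the inductive hypothesis applies to them directly. Two short algebraic identities then do all the work: writing $y=P_kT_2x$ and using $P_ky=y$,
\begin{align*}
(I-P_k)x &= (I-T_4)x + (I-P_k)(T_4x) - P_k(I-T_4)x,\\
(I-P_k)Sx &= (T_1-I)y + P_k(I-T_1)y,
\end{align*}
so each of $(I-P_k)x$ and $(I-P_k)Sx$ is a sum of at most three terms, each lying in $W$ or $P_kW$ by the inductive hypothesis or Lemma \ref{neighbourhood} (the norm hypotheses $\|T_4x\|,\|P_kT_4x\|,\|T_1y\|\geq 1-\varepsilon$ all follow from $\|Sx\|\geq 1-\varepsilon$). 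This is why a single fixed shrink $4W+4P_iW\subseteq V$ suffices, with no dependence on $|S|$ or on the number of blocks. Once you insert this decomposition and these identities, your argument becomes the paper's proof; without them it has a genuine gap.
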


\begin{proof}
We prove this by induction on $j$. The case $j=1$ follows immediately from Lemma \ref{neighbourhood} (since $S$ is just $P_i$ for some $i \in \{1,\dots ,J\}$). Suppose the assertion is true for $j-1$. Let $S \in \mathcal{M}_j$. If $S \in \mathcal{M}_{j-1}$, we would be done by the induction hypothesis. Therefore we may assume that \[S \in \mathcal{M}_j \setminus \mathcal{M}_{j-1}.\] Without loss of generality, we may also assume that $S$ is in the free semigroup generated by $P_1,P_2,\dots ,P_j$ (if not we simply relabel the projections). Then since $S \notin \mathcal{M}_{j-1}$, for any index $k \in \{1,\dots, j\}$, $S$ can be written in the form
\begin{equation*}
S=T_1P_kT_2 = T_3P_kT_4,
\end{equation*}
where $T_1,T_4 \in \mathcal{M}_{j-1}$, and $T_2,T_3 \in \mathcal{M}_{j}$. Let $U$ be a neighbourhood, and pick $V$ as in Lemma \ref{new neighbourhood}. Since $P_i$ is continuous, and so weakly continuous for each $i \in \{1,\dots ,j\}$, we may pick a neighbourhood $W$ such that
\begin{equation*}
4W + 4P_iW \subseteq V, \quad i \in \{1,\dots ,j\}.
\end{equation*}
Indeed, we have that $f_i = 4I + 4P_i$ is weakly continuous for each $i \in \{1,\dots,j\}$. Then since $V$ is weakly open, $f_i^{-1}(V)$ is too, and so we may find a neighbourhood $W_i$ contained in $f_i^{-1}(V)$. Hence,
\begin{equation*}
f_i(W_i) \subseteq f_i(f_i^{-1}(V))\subseteq V, \quad i \in \{1,\dots,j\}.
\end{equation*}
Now since $\bigcap_{i=1}^j W_i$ is a finite intersection of weakly open sets, it is itself weakly open. Therefore we many find a neighbourhood $W$ contained in $\bigcap_{i=1}^j W_i$. Then $W$ has the required property that for each $i \in \{1,\dots j\}$,
\begin{equation*}
4W + 4P_iW = f_i(W) \subseteq f_i(W_i) \subseteq V.
\end{equation*}
By the induction hypothesis, it is possible to find $\varepsilon_1$ such that for $x\in B_H$ and $T\in \mathcal{M}_{j-1}$, we have \[\|Tx\| \geq 1-\varepsilon_1 \implies (I-T)x \in W. \] By Lemma \ref{neighbourhood}, we can find $\varepsilon_2$ such that for $x\in B_H$ and $T=P_k$, \[\|Tx\| \geq 1-\varepsilon_2 \implies (I-T)x \in W.\] We set $\varepsilon = \min\{\varepsilon_1,\varepsilon_2\}$, and note it is independent of $S$ (since $\varepsilon_1$ and $\varepsilon_2$ are). Then given $x \in B_H$ and either $T\in \mathcal{M}_{j-1}$ or $T=P_k$,
\begin{equation} \label{neighbourhood implication}
\|Tx\| \geq 1-\varepsilon \implies (I-T)x \in W.
\end{equation}
We now fix $x \in B_H$, and we assume that $\|Sx\| \geq 1-\varepsilon$. If we can show that $(I-S)x \in U$, then our induction is complete. We note that
\begin{equation*}
1\geq \|T_4x\| \geq \|P_kT_4x\| \geq \|T_3P_kT_4x\| = \|Sx\| \geq 1-\varepsilon.
\end{equation*}
In particular, $\|T_4x\| \geq 1-\varepsilon$ and  $\|P_k(T_4x)\| \geq 1-\varepsilon$. Then by (\ref{neighbourhood implication}), we have $(I-T_4)x \in W$ and $(I - P_k)(T_4x) \in W$. Hence,
\begin{equation} \label{element of 1}
\begin{alignedat}{2}
(I-P_k)x &= (I-T_4)x + (I-P_k)(T_4x) - P_k(I-T_4)x \\
& \in W + W + P_kW \subseteq 2W + 2P_kW \subseteq \frac{1}{2} V.
\end{alignedat}
\end{equation}
We also note that $ \|T_1(P_kT_2x)\| = \|Sx\| \geq 1-\varepsilon$, so that by (\ref{neighbourhood implication}), we have $(I-T_1)(P_kT_2x) \in W$. Hence,
\begin{equation} \label{element of 2}
\begin{aligned}
(I-P_k)Sx &= (T_1 - I)P_kT_2x + P_k(I-T_1)P_kT_2x \\
&\in W + P_kW \subseteq \frac{1}{2} V. 
\end{aligned}
\end{equation}
Since (\ref{element of 1}) and (\ref{element of 2}) are valid for all $k \in \{1,\dots ,j\}$, Lemma \ref{new neighbourhood} guarantees that $R_jx \in \frac{1}{2} U$ and $R_j(Sx) \in \frac{1}{2} U$. Therefore, \[R_j(I-S)x \in U.\]
Recalling that we assumed $S \in \mathcal{M}_j \setminus \mathcal{M}_{j-1}$ is in the free semigroup generated by $P_1,P_2,\dots,P_j$, a similar argument to Lemma \ref{foundational 2}\ref{useful for amemiya}  gives
\begin{equation*}
\ker(I-P_j\dots P_1)=\bigcap_{k=1}^j \ker(I-P_k) = \ker(I-S). 
\end{equation*}
Since $Q_j$ is the projection onto  $\ker(I-P_j\dots  P_1) = \ker(I-S)$, we have that $(I-S)(I-R_j)x = (I-S)Q_jx  = 0$. Rearranging, we have \[(I-S)x = (I-S)R_jx.\] We note also that since $Q_j$ commutes with $P_k$ for each $k \in \{1,\dots ,j\}$ (by Lemma \ref{commutes}), then $R_j$ commutes with $P_k$ for each $k \in \{1,\dots ,j\}$. Therefore $R_j$ commutes with $S$, and so $R_j$ commutes with $I-S$. Hence
\begin{equation*}
(I-S)x = (I-S)R_jx = R_j(I-S)x \in U,
\end{equation*}
thus completing the induction. 
\end{proof}

We are finally able to prove that $(x_n)$ always converges with respect to the weak topology.

\begin{proof}[Proof of Theorem \ref{amemiya}]
We begin by noting that without loss of generality, we may assume that \[\|x_0\|=1.\] Indeed, if we prove Theorem \ref{amemiya} for this case, then we may extend it to any $x_0 \in H$ by noting that \[P_{j_n}\dots P_{j_1}x_0 \textnormal{ converges weakly } \iff P_{j_n}\dots P_{j_1}\frac{x_0}{\|x_0\|} \textnormal{ converges weakly}.\]

We also note that ($\|x_n\|$) is a monotonically decreasing sequence, bounded below by $0$, and so it converges to some non-negative limit. If $\|x_n\| \to 0$, then $x_n$ converges in norm, and hence converges weakly. We may therefore suppose that $\lim_{n\to \infty} \|x_n\| >0$, and so $\inf_{n\geq 0}\|x_n\|>0.$

For any neighbourhood $U$, let $\varepsilon = \varepsilon(U,J)$ be as in Lemma \ref{important neighbourhood lemma}. Then there exists an $N \in \mathbb{N}$ such that for $n \geq m \geq N$, we have \[\|x_n\| \geq (1-\varepsilon)\|x_m\|.\] 
Indeed, suppose for a contradiction there was no such $N$, so that for any $N_i\in \mathbb{N}$ there are $n_i \geq m_i \geq N_i$ such that $\|x_{n_i}\| < (1-\varepsilon)\|x_{m_i}\|$. We begin by picking $N_1 = 0$ and finding appropriate $n_1 \geq m_1 \geq N_1=0$. Then, letting $N_2 = n_1+1$, we pick $n_2 \geq m_2 \geq N_2$, and continue inductively in this way. We then have for $k \in \mathbb{N}$,
\begin{equation*}
\begin{aligned}
\|x_{n_k}\| &< (1-\varepsilon)\|x_{m_k}\| \leq (1-\varepsilon)\|x_{N_k}\| \leq (1-\varepsilon)\|x_{n_{k-1}}\| \\
& < (1-\varepsilon)^2\|x_{n_{k-2}}\| <\dots <(1-\varepsilon)^k\|x_{n_1}\| \\ 
&\leq (1-\varepsilon)^k\|x_0\| \to 0 \textnormal{ as } k\to \infty,
\end{aligned}
\end{equation*}
contradicting $\inf_{n\geq0}\|x_n\|>0$. 

For a given $U$, we find an $N$ as above, and let $n\geq m \geq N$. Let $x = \frac{x_m}{\|x_m\|}$, and note that there is some $S\in \mathcal{M}_J$ such that $x_n = S\circ x_m$. Then
\begin{equation*}
\|Sx\| = \frac{\|x_n\|}{\|x_m\|} \geq 1-\varepsilon,
\end{equation*}
so Lemma \ref{important neighbourhood lemma} guarantees that $(I-S)x \in U$. Hence
\begin{equation} \label{difference in neighbourhood}
\begin{aligned}
x_m - x_n &= (I-S)x_m = \|x_m\|(I-S) \frac {x_m} {\|x_m\|} \\
&= \|x_m\|(I-S)x \leq \|x_0\|(I-S)x \\
&= (I-S)x \in U.
\end{aligned}
\end{equation}
Let $\delta >0$ and $y \in H$. Since $U$ was an arbitrary neighbourhood, we can pick $U = \{x \in H : | \langle x,y \rangle | < \delta/2\}$.  Then (\ref{difference in neighbourhood}) gives that for $n \geq m \geq N$, we have $x_m - x_n \in U$, and therefore \[| \langle x_m - x_n, y \rangle | < \delta / 2.\] But we also note that $(x_n)$ is a bounded sequence ($\|x_n\| \leq \|x_0\|$ for each $n \in \mathbb{N}$), and so has a weakly convergent subsequence, say $x_{n_k}$, converging weakly to some limit $x_\infty \in H$. Then there exists some $K \geq N$ such that \[| \langle x_{n_K} - x_\infty, y \rangle | < \delta /2 .\] Therefore for $n\geq n_K$,
\begin{equation*}
|\langle x_n - x_\infty, y \rangle| \leq |\langle x_n - x_{n_{K}},y \rangle| + |\langle x_{n_{K}} - x_\infty,y \rangle| < \delta /2 + \delta /2 = \delta.
\end{equation*} 
Hence $x_n$ converges weakly to $x_\infty$, completing our proof.
\end{proof}

We have shown that $(x_n)$ always converges weakly, but we do not yet know to what limit. Rather than taking Amemiya and Ando's approach in finding the limit, we notice that a simpler argument due to Sakai (Lemma \ref{key result sakai 2}) \cite{Sak95} works here too. Combining this with Theorem \ref{amemiya}, we obtain the following.
\begin{corollary} \label{limit}
Suppose that $(j_n)$ takes every value in $\{1,\dots,J\}$ infinitely many times. Then $(x_n)$ converges weakly to the orthogonal projection of $x_0$ onto $M = \bigcap_{j=1}^J M_j$.
\end{corollary}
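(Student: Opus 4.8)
The plan is to obtain Corollary~\ref{limit} as an immediate consequence of the two results already in hand: Theorem~\ref{amemiya}, which guarantees that $(x_n)$ converges weakly for an \emph{arbitrary} sequence $(j_n)$, and Lemma~\ref{key result sakai 2}, which identifies the weak limit as $P_M x_0$ provided that $(x_n)$ converges weakly and that every index in $\{1,\dots,J\}$ occurs infinitely often in $(j_n)$. The hypothesis of the corollary supplies exactly the ``infinitely often'' assumption, so the two results dovetail with nothing left to prove.

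Concretely, the steps I would carry out are: first, apply Theorem~\ref{amemiya} to conclude that $(x_n)$ converges weakly to some $x_\infty \in H$; second, observe that the hypotheses of Lemma~\ref{key result sakai 2} are now all met (weak convergence has just been established, and every value in $\{1,\dots,J\}$ is assumed to appear infinitely many times); third, invoke Lemma~\ref{key result sakai 2} to read off $x_\infty = P_M x_0$. That completes the argument.

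There is no genuine obstacle here: all the substantive content lives in the inputs. Theorem~\ref{amemiya} is the deep ingredient, and the identification of the limit in Lemma~\ref{key result sakai 2} is the short argument recalled above --- for each $j$ extract a subsequence of $(x_n)$ lying in $M_j$ to force $x_\infty \in M$, then telescope the orthogonality relations $\langle x_n - x_{n+1}, x\rangle = 0$ for $x \in M$ to force $x_0 - x_\infty \in M^\perp$. The only point worth a moment's care is the direction of dependence: Lemma~\ref{key result sakai 2} \emph{presupposes} weak convergence rather than proving it, so it is essential that Theorem~\ref{amemiya} be applied first; once that is noted there is no circularity, and the corollary follows in two lines.
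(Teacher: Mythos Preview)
Your proposal is correct and matches the paper's approach exactly: the paper states the corollary immediately after Theorem~\ref{amemiya} and explicitly says it follows by combining that theorem with Lemma~\ref{key result sakai 2}, which is precisely the two-step argument you describe.
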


Since in a finite-dimensional Hilbert space,  convergence in norm is equivalent to weak convergence, the following corollary is immediate from Theorem \ref{amemiya}.

\begin{corollary}
Assume the same setting as in Theorem \ref{amemiya}, except that we now specify that $H$ is a finite-dimensional Hilbert space. Then $(x_n)$ converges in norm.
\end{corollary}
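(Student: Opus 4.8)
The plan is to deduce this immediately from Theorem \ref{amemiya}, using the standard fact that on a finite-dimensional Hilbert space the weak topology and the norm topology coincide. First I would invoke Theorem \ref{amemiya} in the present setting: since the hypotheses there place no restriction on $H$, the sequence $(x_n)$ converges weakly to some $x_\infty \in H$. It then remains only to upgrade this weak convergence to convergence in norm, which is where finite-dimensionality enters.

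For the upgrade, I would argue directly rather than quoting the topological statement as a black box. Fix an orthonormal basis $e_1,\dots,e_N$ of $H$ (where $N = \dim H$). Weak convergence $x_n \to x_\infty$ means in particular that $\langle x_n, e_k \rangle \to \langle x_\infty, e_k \rangle$ for each $k \in \{1,\dots,N\}$. Writing $x_n = \sum_{k=1}^N \langle x_n,e_k\rangle e_k$ and $x_\infty = \sum_{k=1}^N \langle x_\infty,e_k\rangle e_k$, Lemma \ref{foundational}\ref{pythagoras} (applied repeatedly) gives
\begin{equation*}
\|x_n - x_\infty\|^2 = \sum_{k=1}^N |\langle x_n - x_\infty, e_k\rangle|^2 \to 0 \quad \text{as } n \to \infty,
\end{equation*}
since this is a finite sum of terms each tending to $0$. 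Hence $\|x_n - x_\infty\| \to 0$, i.e. $(x_n)$ converges in norm (to the same limit), which completes the argument.

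Honestly there is no real obstacle here: the only point requiring the slightest care is making explicit why coordinatewise convergence of a finite-dimensional vector implies norm convergence, and that is handled by the orthonormal-basis computation above. (If one preferred, one could instead just cite the general fact, already used implicitly in Section \ref{a brief history}, that weak convergence coincides with norm convergence in finite dimensions, and then the corollary is a one-line consequence of Theorem \ref{amemiya}.) Combining with Corollary \ref{limit}, one moreover sees that when $(j_n)$ takes every value infinitely often the norm limit is precisely $P_M x_0$, recovering Pr\'ager's theorem.
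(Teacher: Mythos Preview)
Your proof is correct and follows exactly the approach taken in the paper: invoke Theorem \ref{amemiya} for weak convergence, then use that weak and norm convergence coincide in finite dimensions. The paper simply states this equivalence as a known fact rather than spelling out the orthonormal-basis computation, but the argument is the same.
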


In fact, even more can be said. We end this section with a nice observation due to Sakai \cite{Sak95} (briefly mentioned in Section \ref{concluding remarks}). We consider the set $D=\{1\leq d \leq J : (j_n) \textnormal{ takes value $d$ infinitely many times}\}.$

\begin{lemma} \label{Sakai mistake}
Suppose there is some $i \in D$ such that $M_i$ is finite-dimensional. Then $(x_n)$ converges in norm.
\end{lemma}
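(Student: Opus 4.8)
The plan is to upgrade the weak convergence supplied by Theorem \ref{amemiya} (Amemiya--Ando) to norm convergence by exploiting the standard Hilbert-space principle that a weakly convergent sequence whose norms converge to the norm of the weak limit must in fact converge in norm. Concretely, Theorem \ref{amemiya} furnishes a vector $x_\infty \in H$ with $x_n \rightharpoonup x_\infty$ weakly, and the identity
\[
\|x_n - x_\infty\|^2 = \|x_n\|^2 - 2\,\mathrm{Re}\langle x_n, x_\infty\rangle + \|x_\infty\|^2
\]
shows that $\|x_n - x_\infty\| \to 0$ as soon as we know $\|x_n\| \to \|x_\infty\|$, since $\langle x_n, x_\infty\rangle \to \langle x_\infty, x_\infty\rangle = \|x_\infty\|^2$ by weak convergence. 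So the whole argument reduces to establishing $\|x_n\| \to \|x_\infty\|$.

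First I would record that $(\|x_n\|)_{n\geq0}$ is monotonically decreasing: each $x_{n+1} = P_{j_{n+1}}x_n$, so $\|x_{n+1}\| \leq \|x_n\|$ by Lemma \ref{foundational}\ref{projection norm}. Being bounded below by $0$, this sequence converges to some $c \geq 0$, and it suffices to identify $c$ with $\|x_\infty\|$.

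This is where the finite-dimensionality of $M_i$ enters. Since $i \in D$, the index $i$ occurs infinitely often in $(j_n)$, so for infinitely many $n \geq 1$ we have $x_n = P_i x_{n-1} \in M_i$; let $(x_{n_k})_{k\geq1}$ be the corresponding subsequence, which lies entirely in $M_i$. As $M_i$ is a norm-closed subspace of a Hilbert space, it is weakly closed, so the weak limit $x_\infty$ lies in $M_i$ and $x_{n_k} \rightharpoonup x_\infty$ within the Hilbert space $M_i$. Because $M_i$ is finite-dimensional, its weak and norm topologies coincide, so $x_{n_k} \to x_\infty$ in norm and hence $\|x_{n_k}\| \to \|x_\infty\|$. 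Since a convergent sequence and any of its subsequences share the same limit, $c = \|x_\infty\|$, and therefore $\|x_n\| \to \|x_\infty\|$; feeding this into the displayed identity completes the proof.

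There is no serious obstacle here; the one point deserving a word of care is the passage from ``$x_{n_k} \rightharpoonup x_\infty$ in $H$ with all $x_{n_k} \in M_i$'' to ``$x_\infty \in M_i$ and the convergence takes place inside $M_i$'', which rests on the fact that norm-closed subspaces of a Hilbert space are weakly closed (equivalently, on Mazur's theorem). Everything else is bookkeeping with the identity above and the monotonicity of $(\|x_n\|)$.
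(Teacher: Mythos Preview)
Your proof is correct and takes a genuinely different (and arguably cleaner) route from the paper's. Both arguments start identically: invoke Theorem~\ref{amemiya} to get a weak limit $x_\infty$, pass to a subsequence $(x_{n_k})$ lying in the finite-dimensional $M_i$, and conclude $x_{n_k}\to x_\infty$ in norm. At this point the two diverge. The paper observes that $x_\infty\in M_d$ for \emph{every} $d\in D$ (each $M_d$ being weakly closed), so that once $n$ is large enough that $j_n\in D$, the projections fix $x_\infty$; then for $m\geq n_K$ one has $\|x_m-x_\infty\|=\|P_{j_m}\cdots P_{j_{n_K+1}}(x_{n_K}-x_\infty)\|\leq\|x_{n_K}-x_\infty\|<\varepsilon$, a Fej\'er-monotone-style estimate. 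You instead exploit the monotone decrease of $(\|x_n\|)$ to identify its limit $c$ with $\|x_\infty\|$ via the single subsequence, and finish with the Radon--Riesz principle $\|x_n-x_\infty\|^2=\|x_n\|^2-2\,\mathrm{Re}\langle x_n,x_\infty\rangle+\|x_\infty\|^2\to0$. Your route is lighter: it needs $x_\infty\in M_d$ only for $d=i$, not for all of $D$, and reduces everything to a one-line Hilbert-space identity. The paper's route, on the other hand, is more robust to generalisation (e.g., to nonexpansive maps or non-Hilbert settings where the polarisation identity is unavailable), which may explain the author's choice.
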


Sakai gave a proof of this result in his paper \cite{Sak95}, but it appears to be incorrect. In particular, it seems as though Sakai assumed that if a sequence converges weakly to a limit, and has a subsequence which converges in norm, then the sequence converges in norm. However, this is not generally true, and we demonstrate this as follows.
 
It is known that there are sequences which converge weakly, but not in norm. By translating the sequence if needed, we may find a sequence $(a_n)$ converging weakly to $0$, but not in norm. But then the sequence $(b_n)$ given by
\begin{equation*}
(b_n) = (a_1,0,a_2,0,a_3,0 \dots)
\end{equation*}
converges weakly to $0$ and has a subsequence which converges in norm, but does not itself converge in norm. 

However, we find that Lemma \ref{Sakai mistake} still turns out to be true. We offer the following proof.

\begin{proof}[Proof of Lemma \ref{Sakai mistake}]
We know, due to Theorem \ref{amemiya}, that $(x_n)$ converges weakly to some limit $x_\infty$. Since $i \in D$, we may pass to a subsequence $(x_{n_k})_{k\geq 1}$ such that each $x_{n_k} \in M_i$, and note that it must also converge weakly to $x_\infty$ (as $k \to \infty$). However $M_i$ is finite-dimensional, and in a finite-dimensional space, weak convergence is equivalent to convergence in norm, so we have that $(x_{n_k})$ converges in norm to $x_\infty$. 

By definition of $D$, we may find some $t \in \mathbb{N}$ such that for $n\geq t$, we have $j_n \in D$. In particular, for $n\geq t$, we have $P_{j_n}x_\infty = x_\infty$. Since $(x_{n_k})$ converges in norm to $x_\infty$, for any $\varepsilon>0$, we may find some $K \geq t$ such that $\|x_{n_K} - x_\infty\| < \varepsilon$. So for $m\geq n_K$, we have
\begin{equation*}
\begin{aligned}
\|x_m - x_\infty\| &= \|P_{j_m}P_{j_{m-1}}\dots P_{j_{n_{K+1}}}x_{n_{K}} - x_\infty\| \\
& = \|P_{j_m}P_{j_{m-1}}\dots P_{j_{n_{K+1}}}(x_{n_{K}} - x_\infty)\| \\
& \leq \|x_{n_{K}} - x_\infty\| < \varepsilon.
\end{aligned}
\end{equation*}
Hence $(x_m)$ converges in norm to $x_\infty$, concluding our proof.
\end{proof}

\newpage

\section{Failure of strong convergence} \label{failure strong convergence}

As mentioned in the introduction, Amemiya and Ando's question as to whether there is a sequence of projections that does not converge in norm \cite{AmAn65} went unanswered for a long time. It was resolved only in 2012, when Paszkiewicz proved that for any infinite-dimensional Hilbert space, we may find five subspaces, a vector $x_0 \in H$, and a sequence $(j_n)$, so that $(x_n)$ does not converge in norm \cite{Pas12}. This construction was improved by Kopeck\'a and M\"uller from five subspaces to three \cite{KoMu14}, and then refined in 2017 by Kopeck\'a and Paszkiewicz \cite{KoPa17}.

In this section, we will closely follow Kopeck\'a and Paszkiewicz's construction, presenting a series of technical lemmas as stated in \cite{KoPa17}, leading to a proof of the following theorem.
\begin{theorem} \label{big result kopecka}
There exists a sequence $(j_n)$ with the following property. If $H$ is an infinite-dimensional Hilbert space, and $x_0 \in H$ is a non-zero vector, then there exists three closed subspaces $M_1,M_2,M_3 \subset H$ intersecting only at the origin such that the sequence $(x_n)$ does not converge in norm.
\end{theorem}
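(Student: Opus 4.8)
The plan is to manufacture one universal ``bad'' sequence by concatenating infinitely many copies of a fixed combinatorial building block, and then, given $H$ and $x_0$, to embed the $k$-th copy of the block into $H$ so that it rotates the running vector from (approximately) $e_{k-1}$ to (approximately) $e_k$ along a chosen orthonormal sequence $(e_k)_{k\ge0}$, losing only a factor $1-\varepsilon_k$ of its norm, with $\sum_k\varepsilon_k<\infty$. Two harmless reductions come first. Since convergence in norm of $(x_n)$ is unaffected by multiplying $x_0$ by a non-zero scalar (each $P_j$ is linear), we may assume $\|x_0\|=1$; and since $H$ is infinite-dimensional it contains a separable infinite-dimensional closed subspace $H'$ with $x_0\in H'$, so fixing an orthonormal sequence $(e_k)_{k\ge0}$ with $e_0=x_0$ and $\overline{\mathrm{span}}\{e_k\}=H'$, it suffices to build $M_1,M_2,M_3$ as closed subspaces of $H'$ with trivial intersection inside $H'$ (then every $x_n\in H'$ and $\bigcap_jM_j\subseteq H'$).

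The heart of the matter is the building block, which is where I would invoke the technical lemmas of \cite{KoPa17}. What is needed is a recipe producing, for every $\eta>0$, a finite word $w_\eta$ in the symbols $\{1,2,3\}$ together with a finite-dimensional Hilbert space $K_\eta$, three closed subspaces of $K_\eta$ with intersection $\{0\}$, and orthogonal unit vectors $p,q\in K_\eta$, such that the product of projections read off from $w_\eta$ sends $p$ to a vector of norm at least $1-\eta$ lying within $\eta$ of a scalar multiple of $q$. Heuristically this is possible because a \emph{long} product of projections onto subspaces at small mutual angles can rotate a vector through a large angle while losing arbitrarily little norm, each single step obeying $\|y-P_iy\|^2=\|y\|^2-\|P_iy\|^2$ with $\|y-P_iy\|$ tiny. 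The decisive structural point is that $w_\eta$ depends only on $\eta$, not on $H$ or $x_0$; so, fixing a summable sequence $(\varepsilon_k)$ of numbers in $(0,1)$, I define the universal sequence to be the concatenation $(j_n)=w_{\varepsilon_1}w_{\varepsilon_2}w_{\varepsilon_3}\cdots$.

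To build the subspaces for a given $H,x_0$, embed $K_{\varepsilon_k}$ isometrically into $H'$ with $p\mapsto e_{k-1}$, $q\mapsto e_k$, and the remaining coordinates of $K_{\varepsilon_k}$ sent to fresh basis vectors unused by the other blocks (so consecutive blocks overlap only in the transition vector $e_k$), and take $M_1,M_2,M_3$ to be the closed spans over all $k$ of the corresponding pieces. The technical lemmas ensure that $M_1\cap M_2\cap M_3=\{0\}$ and that the product of projections coming from the $k$-th block carries a vector close to $c\,e_{k-1}$ to one close to $c(1-\varepsilon_k)e_k$. Then I would track the running vector at the checkpoints $n_k$ marking the end of the $k$-th block: an induction — with the per-block tolerances chosen small enough that all accumulated errors remain summable — gives $\|x_{n_k}-c_ke_k\|\to0$, where $c_k=\|x_{n_k}\|$. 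Since $\sum_k\varepsilon_k<\infty$ we have $\prod_k(1-\varepsilon_k)>0$, so the monotone sequence $\|x_n\|$ is bounded below by some $c_\infty>0$; because $e_k\perp e_{k+1}$, for large $k$ we get $\|x_{n_{k+1}}-x_{n_k}\|\ge c_\infty\sqrt2-o(1)\ge c_\infty$, so $(x_n)$ is not Cauchy and therefore does not converge in norm. (Alternatively, Corollary \ref{limit} applied with $M=\{0\}$ gives $x_n\rightharpoonup0$ weakly, so any norm limit would be $0$, contradicting $\|x_n\|\ge c_\infty$.)

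The main obstacle is the building block together with the insistence on \emph{three} fixed subspaces reused for every transition: this is precisely the content that lowered Paszkiewicz's original count from five to three, and coordinating the geometry of a single triple of (necessarily infinite-dimensional) subspaces so that it simultaneously realises $e_{k-1}\mapsto e_k$ for all $k$, while keeping $M_1\cap M_2\cap M_3=\{0\}$, is where essentially all the work lies. A secondary but genuine difficulty is the error analysis in the gluing step: each block only approximately does its job and receives only an approximately correct input, so the parameters $\varepsilon_k$ and the per-block tolerances must be chosen with enough slack that the composed errors stay summable and do not destroy the uniform lower bound on $\|x_n\|$.
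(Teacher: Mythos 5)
Your proposal is correct and follows essentially the same route as the paper: a universal sequence obtained by concatenating words depending only on the tolerances $\varepsilon_k$, each word realised (after choosing an orthonormal spine $(e_k)$ starting from $x_0/\|x_0\|$) by a three-subspace block that carries $e_k$ to within $O(\varepsilon_k)$ of $e_{k+1}$, with the blocks glued into a single triple $M_1,M_2,M_3$ exactly as in Lemmas \ref{word and almost parallel subspaces} and \ref{almost there eva}, and with the hard content correctly located in those two steps. The only divergence is cosmetic and lies in the endgame: you rule out norm convergence by noting that the checkpoint vectors track mutually orthogonal unit directions with norms bounded below (or via Corollary \ref{limit} with $M=\{0\}$), whereas the paper derives a contradiction from Bessel's inequality; both arguments are valid.
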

We aim to present the proofs of the lemmas in a more accessible way, adding additional details where they have been omitted. 
\subsection{Notation}

Given subsets $X,Y \subset H$, we will write $\bigvee X$ for the closed linear span of $X$, and $X \vee Y$ for the closed linear span of $X \cup Y$. We will also write $\bigvee_{i\in I} X_i$ for the closed linear span of $\bigcup_{i \in I} X_i$. Given $x,y \in H$, we will write $\vee x$ and $x\vee y$ for $\vee \{x\}$ and $\vee\{x,y\}$, respectively.

For $m \in \mathbb{N}$, we will write $\mathcal{S}_m$ to denote the free semigroup with generators $a_1, \dots, a_m$. If $A_1, \dots, A_m \in B(H)$, and $\varphi = a_{i_r}\dots a_{i_1} \in \mathcal{S}_m$ for some $r \in \mathbb{N}$ and $i_j \in \{1,\dots,m\}$, then we write
\begin{equation*}
\varphi(A_1,\dots,A_M) = A_{i_r}\dots A_{i_1} \in B(H).
\end{equation*} 
We refer to elements of a semigroup as `words', made up of the `letters' from the set $\{a_1,\dots,a_m\}$. We denote the length of the word $\varphi$ by $|\varphi| = r$, and the number of occurrences of the letter $a_i$ in the word $\varphi$ by $|\varphi _i|$, so that $\sum_{i=1}^m |\varphi_i| = |\varphi|=r$.

\subsection{Continuous dependence of words on letters}

We begin by proving that if we have control over the number of appearances of a contraction in a product, then replacing the contraction with one close in norm to the original does not change the product much.
 
 \begin{lemma} \label{contraction inequality}
 Let $\psi \in \mathcal{S}_n$ for some $n \in \mathbb{N}$. Assume that for each $i \in \{1,2,\dots ,n\}$, $A_i, B_i, E \in B(H)$ are contractions such that each $A_i$ commutes with $E$.  Then
 \begin{equation*}
 \|\psi(A_1,\dots ,A_n)E - \psi(B_1,\dots ,B_n)E\| \leq \sum_{1\leq i \leq n} |\psi_i| \cdot \|A_iE - B_iE\|. 
 \end{equation*}
 \end{lemma}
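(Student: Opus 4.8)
The plan is to prove the bound by a telescoping (``hybrid word'') argument: swap the letters $A_i$ for $B_i$ one position at a time, and use the commutativity hypothesis to peel off a single factor $A_iE - B_iE$ at each step.

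First I would fix the representation $\psi = a_{i_r}\cdots a_{i_1}$ with $r = |\psi|$ and $i_1,\dots,i_r \in \{1,\dots,n\}$, and abbreviate $\Psi = \psi(A_1,\dots,A_n) = A_{i_r}\cdots A_{i_1}$ and $\Phi = \psi(B_1,\dots,B_n) = B_{i_r}\cdots B_{i_1}$. For $0 \leq k \leq r$ I define the hybrid operator
\[
H_k = B_{i_r}\cdots B_{i_{k+1}}\,A_{i_k}\cdots A_{i_1},
\]
so that $H_0 = \Phi$, $H_r = \Psi$, and hence $\Psi E - \Phi E = \sum_{k=1}^{r}(H_k - H_{k-1})E$.

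Next, a direct comparison gives $H_k - H_{k-1} = B_{i_r}\cdots B_{i_{k+1}}(A_{i_k} - B_{i_k})A_{i_{k-1}}\cdots A_{i_1}$. The key observation is that any finite product of the $A_j$ commutes with $E$ (since each $A_j$ does and the set of operators commuting with $E$ is closed under multiplication), so $A_{i_{k-1}}\cdots A_{i_1}E = E\,A_{i_{k-1}}\cdots A_{i_1}$; sliding this $E$ to the left turns the $k$-th term into
\[
(H_k - H_{k-1})E = B_{i_r}\cdots B_{i_{k+1}}\,(A_{i_k}E - B_{i_k}E)\,A_{i_{k-1}}\cdots A_{i_1}.
\]
Since every $A_j$ and $B_j$ is a contraction, the leading $B$-block and trailing $A$-block have norm at most $1$, so $\|(H_k - H_{k-1})E\| \leq \|A_{i_k}E - B_{i_k}E\|$. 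Summing over $k$ and grouping the $r$ resulting terms by which letter occurs --- the letter $a_i$ appearing exactly $|\psi_i|$ times among $i_1,\dots,i_r$ --- yields
\[
\|\Psi E - \Phi E\| \leq \sum_{k=1}^{r}\|A_{i_k}E - B_{i_k}E\| = \sum_{1\leq i\leq n} |\psi_i|\cdot\|A_iE - B_iE\|,
\]
which is the claim; the case $r = 0$ is trivial since both sides vanish.

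I do not expect a genuine obstacle here. The only point needing care is the bookkeeping around $E$: one slides $E$ to the left past exactly the trailing block $A_{i_{k-1}}\cdots A_{i_1}$ of $A$'s (legitimate by commutativity) so that it becomes adjacent to the difference $A_{i_k} - B_{i_k}$, while the leading block of $B$'s --- which need not commute with $E$ --- is merely discarded by the contraction estimate. Arranging the hybrid $H_k$ so that the difference factor always sits precisely at the $B$/$A$ boundary is exactly what makes the argument close.
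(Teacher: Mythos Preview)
Your proof is correct and is essentially the same as the paper's: the paper argues by induction on $|\psi|$, peeling off one letter at a time via the triangle inequality, whereas you unroll this induction into an explicit telescoping (hybrid) sum. Both arguments rest on the same two ingredients --- commuting $E$ past the trailing block of $A$'s and discarding the outer contraction factors --- so they are equivalent presentations of the same proof.
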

 
 \begin{proof}
 We prove this by induction on $|\psi|$. For $|\psi|=0$, we have that $\psi(A_1,\dots ,A_n) =  \psi(B_1,\dots ,B_n)$ is the identity on $H$, so the inequality holds (with both sides being $0$). For our induction hypothesis (IH), suppose the assertion is true for $|\psi| \leq r$. We now suppose $|\psi|= r+1$. Then we have $\psi = \varphi a_j$ for some $\varphi \in \mathcal{S}_n$ with $|\varphi| = r$ and $j\in \{1,2,\dots ,n\}$. Hence
 \begin{align*}
 &\|\psi(A_1,\dots ,A_n)E - \psi(B_1,\dots ,B_n)E\| && \\
 & =\|\varphi(A_1,\dots ,A_n)A_jE - \varphi(B_1,\dots ,B_n)B_jE\| \\
& \leq \|\varphi(A_1,\dots ,A_n)A_jE - \varphi(B_1,\dots ,B_n)A_jE\| \\
& \quad + \|\varphi(B_1,\dots ,B_n)A_jE - \varphi(B_1,\dots ,B_n)B_jE\| && \textnormal{\big{[}triangle inequality\big{]}} \\
& \leq \|\varphi(A_1,\dots ,A_n)E - \varphi(B_1,\dots ,B_n)E\| \\
& \quad + \|\varphi(B_1,\dots ,B_n)\| \cdot \|A_jE-B_jE\| && \textnormal{\big{[}$A_jE=EA_j$,\, $\|A_j\| \leq 1$\big{]}} \\
& \leq \|A_jE-B_jE\| + \sum_{1\leq i \leq n} |\varphi_i| \cdot \|A_iE - B_iE\| && \textnormal{\big{[}IH, $\|\varphi(B_1,\dots,  B_n)\| \leq 1$\big{]}} \\
& = \sum_{1\leq i \leq n} |\psi_i| \cdot \|A_iE - B_iE\|.
\end{align*}
This completes the induction.
 \end{proof}
 
We now state two simple corollaries of Lemma \ref{contraction inequality}, which will be useful later on.
 
 \begin{corollary} \label{contraction inequality corollary} Let $\psi \in \mathcal{S}_3$. Suppose $E,W,X,X',Y,Y',Z$ are subspaces of $H$, such that $W,X,Y \subset E$ and $X',Y' \perp$ E. Then
 \begin{equation*}
 \|\psi(P_W,P_X,P_Y)E - \psi(P_Z,P_{X\vee X'}, P_{Y\vee Y'})E\| \leq | \psi_1| \cdot \|P_ZP_E-P_W\|.
 \end{equation*}
 \end{corollary}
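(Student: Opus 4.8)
The plan is to deduce Corollary~\ref{contraction inequality corollary} from Lemma~\ref{contraction inequality} by choosing the right contractions and the right commuting operator. I would apply the lemma with the same word $\psi \in \mathcal{S}_3$, with $E$ (abusing notation, writing $P_E$ for the orthogonal projection onto the subspace $E$) playing the role of the commuting contraction, taking $A_1 = P_W$, $A_2 = P_X$, $A_3 = P_Y$, and $B_1 = P_Z$, $B_2 = P_{X\vee X'}$, $B_3 = P_{Y\vee Y'}$. The lemma then yields
\begin{equation*}
\|\psi(P_W,P_X,P_Y)P_E - \psi(P_Z,P_{X\vee X'},P_{Y\vee Y'})P_E\| \leq \sum_{1\leq i\leq 3} |\psi_i|\cdot \|A_iP_E - B_iP_E\|,
\end{equation*}
so it remains to check the hypotheses of the lemma and to show that the $i=2$ and $i=3$ terms on the right-hand side vanish, leaving only $|\psi_1|\cdot\|P_ZP_E - P_W P_E\| = |\psi_1|\cdot\|P_ZP_E - P_W\|$.

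First I would verify the commuting hypothesis: for each $i$, $A_i$ must commute with $P_E$. Since $W, X, Y \subset E$, the subspaces $W, X, Y$ are contained in $E$; a projection $P_U$ onto a subspace $U \subseteq E$ satisfies $P_E P_U = P_U = P_U P_E$ (both sides send everything into $U \subseteq E$ and fix $U$), so $A_1, A_2, A_3$ all commute with $P_E$. (We do not need $B_i$ to commute with $P_E$ — the lemma only requires it of the $A_i$.) All seven operators are orthogonal projections, hence contractions, so the remaining hypotheses of Lemma~\ref{contraction inequality} hold.

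Next I would evaluate each summand. For $i=1$: $A_1 P_E = P_W P_E = P_W$ (as $W \subseteq E$), so $\|A_1 P_E - B_1 P_E\| = \|P_W - P_Z P_E\|$, matching the claimed bound's coefficient of $|\psi_1|$. The key cancellations are for $i=2,3$. Here I use the decomposition $X \vee X' = X \oplus X'$ together with $X \subseteq E$ and $X' \perp E$: since $X$ and $X'$ are orthogonal, Lemma~\ref{foundational}\ref{adding projections} gives $P_{X\vee X'} = P_X + P_{X'}$. Now $P_{X'} P_E = 0$ because $X' \perp E$ forces $P_{X'}$ to kill the range of $P_E$ (equivalently, $\ran P_E = E \subseteq (X')^\perp = \ker P_{X'}$). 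Hence $B_2 P_E = P_{X\vee X'}P_E = P_X P_E + P_{X'}P_E = P_X P_E = A_2 P_E$, so the $i=2$ term is zero. The identical argument with $Y, Y'$ in place of $X, X'$ shows $B_3 P_E = A_3 P_E$, killing the $i=3$ term. Substituting back gives exactly
\begin{equation*}
\|\psi(P_W,P_X,P_Y)P_E - \psi(P_Z,P_{X\vee X'},P_{Y\vee Y'})P_E\| \leq |\psi_1|\cdot\|P_Z P_E - P_W\|,
\end{equation*}
as required.

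I do not anticipate a serious obstacle here — this is a bookkeeping corollary. The one point demanding care is making sure the orthogonal-complement hypotheses ($X' \perp E$, $Y' \perp E$) are invoked correctly to get both $P_{X\vee X'} = P_X + P_{X'}$ \emph{and} $P_{X'}P_E = 0$, and similarly for $Y'$; a secondary subtlety is the notational convention that ``$E$'' appearing as an operator in the displayed inequality means $P_E$ (consistent with Lemma~\ref{contraction inequality}'s usage, where the last letter of the word multiplies a fixed contraction also written suggestively). As long as these are pinned down, the proof is a direct specialization of Lemma~\ref{contraction inequality}.
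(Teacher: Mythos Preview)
Your proposal is correct and follows exactly the paper's approach: apply Lemma~\ref{contraction inequality} with $n=3$, $E=P_E$, $(A_1,A_2,A_3)=(P_W,P_X,P_Y)$, $(B_1,B_2,B_3)=(P_Z,P_{X\vee X'},P_{Y\vee Y'})$, and observe that $P_WP_E=P_W$, $P_XP_E=P_{X\vee X'}P_E$, $P_YP_E=P_{Y\vee Y'}P_E$. The paper states these identities without justification; your use of Lemma~\ref{foundational}\ref{adding projections} to write $P_{X\vee X'}=P_X+P_{X'}$ and then $P_{X'}P_E=0$ is a clean way to supply the missing detail.
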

 
 \begin{proof}
We apply Lemma \ref{contraction inequality} for $n=3$, and note that $P_WP_E=P_W$, $P_X=P_XP_E=P_{X \vee X'}P_E$, and $P_Y=P_{Y}P_E=P_{Y \vee Y'}P_E$.
 \end{proof}
 
 \begin{corollary} \label{bound projection word}
 Let $n \in \mathbb{N}$, $\varphi \in S_n$ and $A_1,\dots,  A_n$, $B_1,\dots,  B_n$ be contractions. Then
 \begin{equation*}
\|\varphi(A_1,\dots ,A_n) - \varphi(B_1,\dots ,B_n)\| \leq |\varphi | \cdot \max_{1\leq i \leq n} \|A_i - B_i\|. 
\end{equation*}
 \end{corollary}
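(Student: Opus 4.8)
The plan is to deduce this directly from Lemma \ref{contraction inequality}, specialised to the case where the commuting contraction $E$ is the identity operator $I$ on $H$. First I would observe that $I \in B(H)$ is a contraction and commutes with every $A_i$, so the hypotheses of Lemma \ref{contraction inequality} are satisfied with $E = I$ and $\psi = \varphi$. Applying that lemma then gives
\[
\|\varphi(A_1,\dots ,A_n) - \varphi(B_1,\dots ,B_n)\| \le \sum_{1\le i \le n} |\varphi_i| \cdot \|A_i - B_i\|.
\]
Next I would bound each $\|A_i - B_i\|$ above by $\max_{1\le i\le n} \|A_i - B_i\|$, pull this constant out of the sum, and use the identity $\sum_{i=1}^n |\varphi_i| = |\varphi|$ recorded in the notation section. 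This yields $\sum_{1\le i \le n} |\varphi_i|\cdot\|A_i-B_i\| \le |\varphi|\cdot \max_{1\le i\le n}\|A_i - B_i\|$, which is exactly the claimed inequality.

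There is no substantive obstacle: the only point worth making explicit is that the identity operator is an admissible choice for $E$ in Lemma \ref{contraction inequality}. (If one preferred not to invoke the lemma, the same bound follows from a short induction on $|\varphi|$ identical in structure to the proof of Lemma \ref{contraction inequality}, but routing through $E = I$ is the most economical presentation.)
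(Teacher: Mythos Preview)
Your proposal is correct and matches the paper's own proof exactly: the paper simply writes ``We apply Lemma \ref{contraction inequality}, with $E$ taken to be the identity map.'' Your additional remarks spelling out the bound $\sum_i |\varphi_i|\,\|A_i-B_i\| \le |\varphi|\cdot\max_i\|A_i-B_i\|$ are fine elaborations of what the paper leaves implicit.
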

 
 \begin{proof}
 We apply Lemma \ref{contraction inequality}, with $E$ taken to be the identity map.
 \end{proof}
 
 \subsection{Constructing three subspaces and a finite product of projections} \label{triples}
 
 In this subsection, given orthonormal vectors $u$ and $v$ in $H$, we construct three subspaces $X$, $Y$, and $W=u \vee v$, and a finite product of projections $\psi(P_W,P_X,P_Y)$ onto $W$ such that $\psi(P_W,P_X,P_Y)u$ is close to $v$. This will be useful later when we `glue' together countably many copies of these triples of subspaces.

Let $\varepsilon >0$, and let $k=k(\varepsilon)$ be the smallest positive integer $k$ such that \[\Big{(}\cos \frac{\pi}{2k}\Big{)}^k > 1 - \varepsilon.\] We note that such a $k$ exists since $(\cos \frac{\pi}{2r})^r \to 1$ as $r \to \infty$. Let $u$ and $v$ be orthonormal vectors in $H$, and for $j \in \{0,\dots ,k\}$, let \[h_j=u\cos \frac{\pi j}{2k} + v\sin \frac{\pi j}{2k},\] so that $h_0 = u$ and $h_k=v$. 

Then by definition of $k$, if we project $u$ consecutively onto $\vee h_1,\dots ,\vee h_k$, then we arrive at $v$ with error less than $\varepsilon$, so that 
\begin{equation} \label{consecutive projections}
\|(P_{\vee h_k}\dots P_{\vee h_1})u - v\|<\varepsilon.
\end{equation}
This is illustrated in the following figure.
\vspace{4mm}
\begin{figure}[H]
\begin{center}
\includegraphics[width=81.82mm]{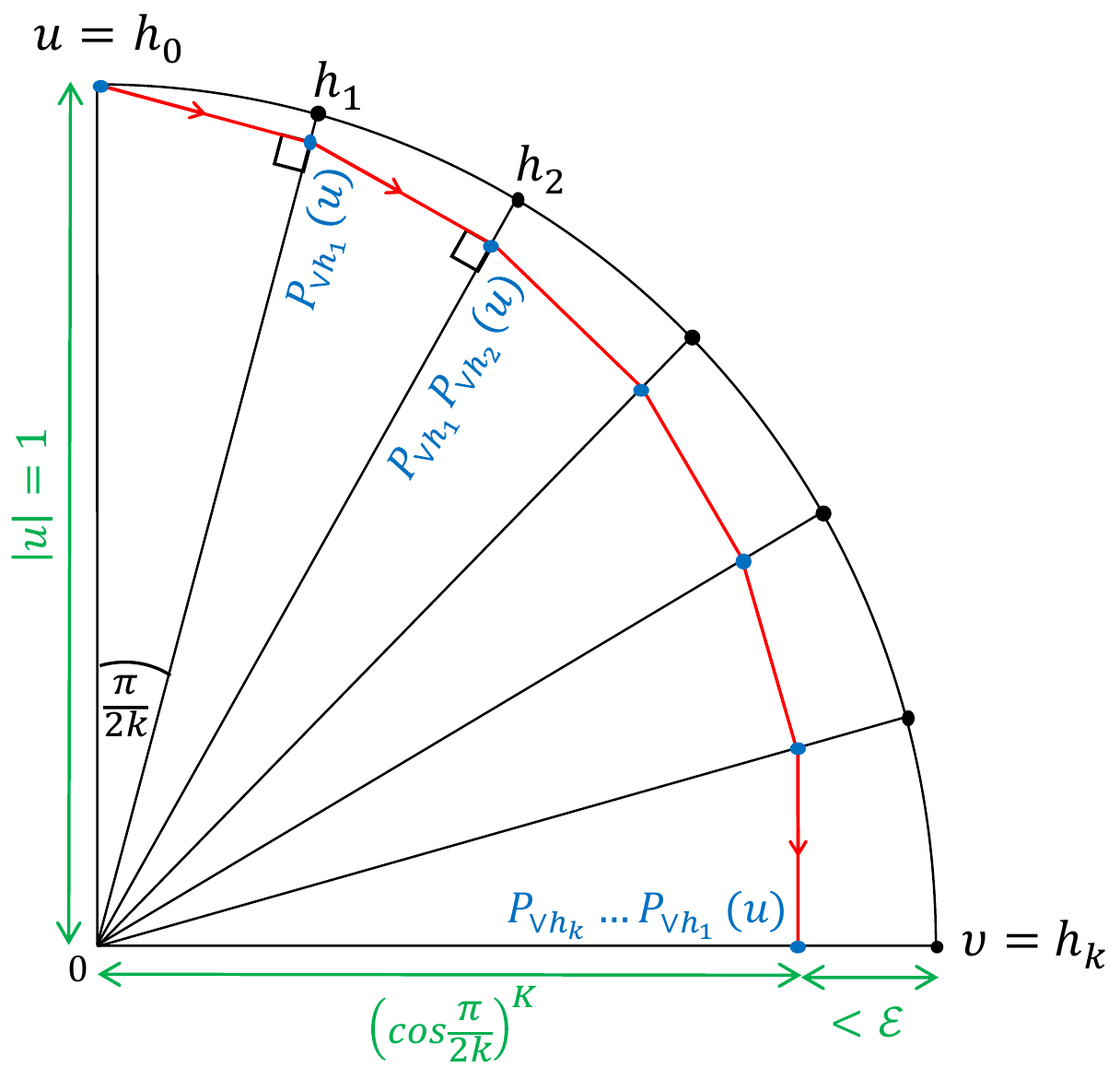}
\caption{Approximating $v$ by projections of $u$}
\label{segment projections}
\end{center}
\end{figure}
We have by Theorem \ref{von neumann} (von Neumann) \cite{von49} that a projection onto $\vee h_j$ can be arbitrarily well approximated by iterating projections between two subspaces with intersection $\vee h_j$. In the following lemma, we call these subspaces $W$ and $X_j'$, and show that we can approximate $X_j'$ with a subspace $X_j$ so that $X_1 \subset \dots \subset X_k$. We will then see in Lemma  \ref{replace projection} that we are able to replace the projections onto each $X_j$ by a product $(XYX)^{s(j)}$ where $X=X_k$, and $Y$ is such that $\|P_X - P_Y\|$ is small. 

Therefore, instead of projecting onto several subpaces to get from $u$ to $v$, we only need to project onto three of them, $W$, $X$, and $Y$, finitely many times. 
 
 \begin{lemma} \label{quarter circle}
 Let $\varepsilon >0$, and recall that $k=k(\varepsilon)$ is the smallest positive integer $k$ such that $(cos\frac{\pi}{2k})^k < 1 -\varepsilon$. There exists $\varphi \in S_{k+1}$ with the following property.
 
 Suppose $X$ is a subspace of $H$ with $\dim X = \infty$, and $u, v \in X$ are vectors such that $\|u\| = \|v\| = 1$ and $u \perp v$. Let $W=u \vee v$. Then there exists subspaces $X_1 \subset \dots  \subset X_{k(\varepsilon )} \subset X$ such that $\dim X_j = j+1$ for all $j \in \{1,\dots ,k \}$, and
 \begin{equation*}
 \| \varphi(P_W,P_{X_1},\dots ,P_{X_{k}})u - v\| < 2\varepsilon.
 \end{equation*}
 \end{lemma}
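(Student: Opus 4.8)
The plan is to realise, up to a small error, the composition $P_{\vee h_k}\cdots P_{\vee h_1}$ as a single word $\varphi$ evaluated at the projections $P_W,P_{X_1},\dots,P_{X_k}$, with the $X_j$ being the nested subspaces we must construct. By (\ref{consecutive projections}) the operator $P_{\vee h_k}\cdots P_{\vee h_1}$ carries $u$ to within $\varepsilon$ of $v$, so if $\varphi(P_W,P_{X_1},\dots,P_{X_k})$ agrees with it on $u$ to within a further $\varepsilon$ we are done. Since projecting onto $\vee h_j$ is redundant ($h_j\in W$ is a fixed two–dimensional space), everything will take place inside $W$, which is where the $2$–dimensionality of $W$ will be repeatedly exploited.

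First I would simulate one ``small rotation''. Fix $j$. As $h_j\in W$, Theorem \ref{von neumann} applied to $W$ and any subspace $X_j'$ with $W\cap X_j'=\vee h_j$ gives $(P_WP_{X_j'})^s\to P_{\vee h_j}$ pointwise, hence (as $\dim W=2$) uniformly on $\{w\in W:\|w\|\le 1\}$. I would choose $X_j'$ of a fixed explicit shape — $X_j'=\vee h_j\oplus V_j$ with $V_j\subset X$, $\dim V_j=j$, and $V_j$ placed so that the Friedrichs angle of $W$ and $X_j'$ is bounded below uniformly over all admissible $u,v,X$ — so that one number $s_j$, depending only on $\varepsilon$, makes $\|(P_WP_{X_j'})^{s_j}w-P_{\vee h_j}w\|$ as small as we like for $\|w\|\le 1$. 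Taking $\varphi\in\mathcal{S}_{k+1}$ to be the concatenation $\varphi=(a_0a_k)^{s_k}(a_0a_{k-1})^{s_{k-1}}\cdots(a_0a_1)^{s_1}$, with $a_0$ standing for $P_W$ and $a_j$ for $P_{X_j}$, and composing the $k$ contraction blocks, I would get $\|\varphi(P_W,P_{X_1'},\dots,P_{X_k'})u-v\|<\tfrac32\varepsilon$. Finally I would pass from the ideal $X_j'$ to the actual $X_j$ by Corollary \ref{bound projection word}: if $\max_j\|P_{X_j}-P_{X_j'}\|<\eta$ then $\|\varphi(P_W,P_{X_1},\dots,P_{X_k})-\varphi(P_W,P_{X_1'},\dots,P_{X_k'})\|\le|\varphi|\,\eta$, so choosing $\eta<\varepsilon/(2|\varphi|)$ yields $\|\varphi(P_W,P_{X_1},\dots,P_{X_k})u-v\|<2\varepsilon$; note that $\varphi$ and $|\varphi|$ are fixed once $\varepsilon$ (hence $k$ and the geometry–independent bounds) are fixed.

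The hard part is the hypothesis of the last step: one must actually exhibit a chain $X_1\subset\cdots\subset X_k\subset X$ with $\dim X_j=j+1$ for every $j$ and, simultaneously, $\|P_{X_j}-P_{X_j'}\|<\eta$ for every $j$, where the ideal subspaces $X_j'$ satisfy $W\cap X_j'=\vee h_j$. This is genuinely delicate because the condition $W\cap X_j'=\vee h_j$ forces the ``$W$–part'' of $X_j'$ to rotate with $j$, whereas the $X_j$ must be increasing, so the obvious choices (e.g. $X_j'=\vee h_j\oplus\vee\{e_1,\dots,e_j\}$ with $e_i\perp W$) are not close to any nested family. The resolution I would pursue is a staircase: pick orthonormal $e_1,e_2,\dots\in X$ orthogonal to $W$ (here is where $\dim X=\infty$ is used), and build an orthonormal flag $f_1,f_2,\dots$ in $X$ adapted to both the $h_j$ and the $e_j$, tilting the successive directions orthogonal to $h_j$ inside $W$ slightly into the $e_j$, so that $X_j:=\bigvee\{f_1,\dots,f_{j+1}\}$ is strictly increasing of the right dimension while each $X_j$ stays within $\eta$ of a valid $X_j'$ whose Friedrichs angle with $W$ is still controlled. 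Making all of these constraints — nesting, dimension, distance $<\eta$ to a valid $X_j'$, and controlled angle — hold at once, with $\eta$ small relative to $|\varphi|$, is the technical heart of the lemma and the step I expect to consume essentially all the work; once it is in place, Steps 1 and 2 above close the argument.
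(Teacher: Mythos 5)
Your high-level plan is the same as the paper's (approximate each $P_{\vee h_j}$ by iterating projections between $W$ and an auxiliary subspace meeting $W$ exactly in $\vee h_j$, then perturb to obtain a nested chain), and you have correctly located the crux: producing $X_1\subset\dots\subset X_k$ of the right dimensions that are simultaneously compatible with auxiliary subspaces for which von Neumann's theorem gives the projections onto the rotating lines $\vee h_j$. But that crux is precisely what you do not carry out --- you describe a ``staircase'' you \emph{would} pursue and concede it is ``the technical heart'' that would ``consume essentially all the work''. Since the entire content of the lemma is that construction, this is a genuine gap, not a finished proof.

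Worse, the quantifier scheme you propose for closing the gap cannot work as stated. You want to fix ideal subspaces $X_j'$ with a Friedrichs angle to $W$ bounded below uniformly, deduce iteration counts $s_j$ depending only on $\varepsilon$, hence a word $\varphi$ of fixed length, and only then choose $\eta<\varepsilon/(2|\varphi|)$ and find nested $X_j$ with $\|P_{X_j}-P_{X_j'}\|<\eta$. But any $X_j'$ that lies within $\eta$ of a member of a nested chain must (for $j\geq 2$) contain unit vectors within $O(\eta)$ of $h_1,\dots,h_{j-1}$; splitting such a vector off its $h_j$-component produces a unit vector in $X_j'\cap h_j^{\perp}$ whose projection onto $W\cap h_j^{\perp}$ has norm $1-O(\eta)$, so $c(W,X_j')\to 1$ as $\eta\to 0$ and the required $s_j$ blows up as $\eta$ shrinks. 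Smaller $\eta$ forces longer $\varphi$, which forces smaller $\eta$: the argument is circular. The paper breaks this circle by ordering the choices \emph{forward in $j$} and never comparing $\varphi$ to the ideal word via a single global $\eta$. It sets $X_j'=\bigvee\{h_0+\alpha_0z_0,\dots,h_{j-1}+\alpha_{j-1}z_{j-1},h_j\}$ with $z_i\in W^{\perp}\cap X$ and $\alpha_i>0$ already fixed (which forces $W\cap X_j'=\vee h_j$ without any uniform angle control), obtains $r(j)$ from von Neumann plus finite-dimensionality for \emph{that} $X_j'$, and only then chooses $\alpha_j$ so small that replacing the single generator $h_j$ by $h_j+\alpha_jz_j$ perturbs the fixed finite product $(P_{X_j}P_WP_{X_j})^{r(j)}$ by less than $\varepsilon/k$; nesting is automatic because the generating sets increase. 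The final comparison is then block-by-block ($k$ blocks, each off by $\varepsilon/k$) via Corollary \ref{bound projection word}, not $|\varphi|\cdot\eta$. If you want to salvage your write-up, you need to adopt this interleaved order of choices (or something equivalent); the linear order you propose does not close.
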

 
 \begin{proof}
We pick orthonormal vectors $z_0,z_1,\dots ,z_{k-1} \in W^{\perp} \cap X$. We then construct inductively a sequence $\alpha_1>\dots >\alpha_{k-1}>\alpha_k = 0$, and a sequence of subspaces $X_1 \subset \dots  \subset X_k \subset X$ in the following way. 

We pick $\alpha_0 \in (0,1)$ arbitrarily. Suppose that the sequence $\alpha_1>\dots > \alpha_{j-1}$, and the subspaces $X_1 \subset \dots  \subset X_{j-1}$ have already been constructed for some $j \in \{1,\dots, k-1\}$. We then set 
\begin{equation*}
X_j' = \bigvee \{h_0+\alpha_0z_0, h_1+\alpha_1z_1,\dots ,h_{j-1}+\alpha_{j-1}z_{j-1},h_j \}.
\end{equation*}
Since $z_i$ is orthogonal to $W$ for each $i\in \{1,\dots ,j-1\}$, we have \[W\cap X_j' = \vee h_j.\] Therefore by Theorem \ref{von neumann} (von Neumann), for each $x\in H$, we have
\begin{equation*}
(P_{X_j'}P_WP_{X_j'})^r x\to P_{\vee h_j}x \textnormal{ as } r\to\infty.
\end{equation*}
Since both $\vee h_j$ and $X_j'$ are finite-dimensional, both $P_{\vee h_j}$ and $P_{X_j'}P_WP_{X_j'}$ map into finite-dimensional subspaces of $H$. Hence there exists $r(j) \in \mathbb{N}$ such that
\begin{equation*}
\|(P_{X_j'}P_WP_{X_j'})^{r(j)} -  P_{\vee h_j}\| < \frac{\varepsilon}{k}. 
\end{equation*}
Let $X_j = \bigvee \{h_0+\alpha_0z_0, h_1+\alpha_1z_1,\dots ,h_{j-1}+\alpha_{j-1}z_{j-1},h_j + \alpha_jz_j\}$. As $\alpha_j \to 0$, $X_j$ is just a small perturbation of $X_j'$, so we can pick $\alpha_j>0$ small enough that
\begin{equation} \label{small perturbation}
\|(P_{X_j}P_WP_{X_j})^{r(j)} -  P_{\vee h_j}\| < \frac{\varepsilon}{k}.
\end{equation}
Suppose we have constructed $X_1 \subset \dots  \subset X_{k-1}$ and $\alpha_1>\dots >\alpha_{k-1}$ as above. We set $\alpha_k = 0$ and $X_k = X_k' =  \bigvee \{h_0+\alpha_0z_0, h_1+\alpha_1z_1,\dots ,h_{k-1}+\alpha_{k-1}z_{k-1},h_k\}$. We now find $r(k) \in \mathbb{N}$ such that (\ref{small perturbation}) holds also for $j=k$. Let $\varphi \in S_{k+1}$, and $\psi \in S_k$ be given by
\begin{align*}
\varphi (c,b_1,\dots ,b_k) &= (b_kcb_k)^{r(k)}\dots (b_1cb_1)^{r(1)}, \\
\psi(a_1,\dots ,a_k) &= a_1\dots  a_k.
\end{align*}
Then by Corollary \ref{bound projection word},
\begin{equation} \label{consecutive projections 2}
\begin{aligned}
&\| \varphi(P_W,P_{X_1},\dots ,P_{X_k}) - P_{\vee h_k}\dots  P_{\vee h_1} \| \\
&= \|\psi\big{(}(P_{X_k}P_WP_{X_k})^{r(k)},\dots ,(P_{X_1}P_WP_{X_1})^{r(1)}\big{)} - \psi(P_{\vee h_k}\dots  P_{\vee h_1})\| \\
& \leq |\psi|\cdot \max_{1\leq j \leq k} \|(P_{X_j}P_WP_{X_j})^{r(j)} - P_{\vee h_j} \| \\
&< k \cdot \frac{\varepsilon}{k} = \varepsilon.
\end{aligned}
\end{equation}
Hence, (\ref{consecutive projections}) and (\ref{consecutive projections 2}) give that
\begin{equation*}
\begin{aligned}
&\|\varphi(P_W,P_{X_1},\dots ,P_{X_k})u - v\| \\
&\leq \| \varphi(P_W,P_{X_1},\dots ,P_{X_k})u - (P_{\vee h_k}\dots P_{\vee h_1})u\| + \|(P_{\vee h_k}\dots P_{\vee h_1})u - v\| \\
&< \varepsilon + \varepsilon = 2\varepsilon,
\end{aligned}
\end{equation*}
completing the proof. We note that $\varphi$ does not depend on $X$, $u$, or $v$.
\end{proof}

We have constructed above a family of $k$ finite-dimensional subspaces. We see in the next lemma that these can be replaced by projections onto just two subspaces: the largest subspace in the family and a small variation of it. 

\begin{lemma} \label{replace projection}
Let $k \in \mathbb{N}$, $\varepsilon >0$, $\eta >0$, and $a>0$ be given. There exist natural numbers $a<s(k)<s(k-1)<\dots <s(1)$ with the following property. 

Suppose $X_1 \subset \dots  \subset X_k \subset X \subset E$ are closed subspaces of $H$, with $X$ separable and $\dim (X^\perp \cap E) = \infty$. Then there exists a closed subspace $Y \subset E$ such that $X \cap Y = \{0\}$, $\|P_X-P_Y\|<\eta$, and
\begin{equation*}
\|(P_XP_YP_X)^{s(j)} - P_{X_j}\| < \varepsilon, \quad j \in \{1,\dots ,k\}.
\end{equation*}
\end{lemma}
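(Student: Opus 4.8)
The plan is to construct $Y$ so that its "angle operator'' with $X$, namely $D:=(P_XP_YP_X)|_X$, is a \emph{prescribed} positive contraction on $X$, diagonal with respect to the orthogonal decomposition $X=N_1\oplus\dots\oplus N_{k+1}$, where $N_i=X_i\ominus X_{i-1}$ (with the conventions $X_0=\{0\}$, $X_{k+1}=X$), and where $D$ acts as a scalar $\lambda_i\in(0,1)$ on $N_i$. For such a $D$ and any $s\ge 1$, the operator $(P_XP_YP_X)^s$ annihilates $X^{\perp}$ and acts as $\lambda_i^{s}$ on $N_i$, while $P_{X_j}$ annihilates $X^{\perp}$ and acts as $1$ on $N_1\oplus\dots\oplus N_j$ and as $0$ on $N_{j+1}\oplus\dots\oplus N_{k+1}$. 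Since both operators vanish on $X^{\perp}$ and preserve $X$, their operator norm on $H$ equals the norm of their (diagonal) restrictions to $X$, which is $\max\{\max_{i\le j}(1-\lambda_i^{s}),\ \max_{i>j}\lambda_i^{s}\}$. So the whole lemma reduces to choosing $a<s(k)<\dots<s(1)$ in $\mathbb{N}$ and scalars $\lambda_1,\dots,\lambda_{k+1}$ — all depending only on $k,\varepsilon,\eta,a$ — with
\[
1-\lambda_i<\tfrac{\eta^{2}}{4}\quad(1\le i\le k+1),\qquad \lambda_i^{\,s(i)}>1-\varepsilon\quad(1\le i\le k),\qquad \lambda_i^{\,s(i-1)}<\varepsilon\quad(2\le i\le k+1).
\]
The first family will give $\|P_X-P_Y\|<\eta$; the other two, with the monotonicity of $s(\cdot)$ (so that $s(j)\le s(i)$ when $i\le j$ and $s(j)\ge s(i-1)$ when $i>j$), will give $\|(P_XP_YP_X)^{s(j)}-P_{X_j}\|<\varepsilon$ for each $j$.

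To see these inequalities are jointly satisfiable, I would write $\lambda_i=e^{-c_i}$ and use $1-\lambda_i\le c_i$, the implication $c_is<\varepsilon\Rightarrow\lambda_i^{s}>1-\varepsilon$ (since $e^{-x}\ge 1-x$), and $c_is>\ln(1/\varepsilon)\Rightarrow\lambda_i^{s}<\varepsilon$; one may assume $\varepsilon<1$ so $\ln(1/\varepsilon)>0$. The data would be fixed greedily in the order $s(k),c_{k+1},c_k,s(k-1),c_{k-1},s(k-2),\dots,c_2,s(1),c_1$: first take $s(k)\in\mathbb{N}$ with $s(k)>\max\{a,\,8\ln(1/\varepsilon)/\eta^{2}\}$; then $c_{k+1}\in(\ln(1/\varepsilon)/s(k),\,\eta^{2}/8)$, which is non-empty by the choice of $s(k)$; then, after $s(i)$ is chosen, pick $c_i\in(0,\min\{\varepsilon/s(i),\,\eta^{2}/8\})$, and, after $c_i$ is chosen, pick $s(i-1)\in\mathbb{N}$ with $s(i-1)>s(i)$ and $c_is(i-1)>\ln(1/\varepsilon)$. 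This yields $a<s(k)<\dots<s(1)$ and all the displayed inequalities.

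Given the subspaces, I would realise $D$ concretely. Since $X$ is separable, pick for each $i$ an orthonormal basis of $N_i$ and let $\{e_n\}_{n\ge1}$ be their union, so each $e_n\in N_{i(n)}$; set $\mu_n=\lambda_{i(n)}$. Since $\dim(X^{\perp}\cap E)=\infty$, pick an orthonormal sequence $\{f_n\}_{n\ge1}\subset X^{\perp}\cap E$, set $y_n=\sqrt{\mu_n}\,e_n+\sqrt{1-\mu_n}\,f_n$, and put $Y=\bigvee\{y_n:n\ge1\}$; then $Y\subseteq E$ because each $y_n\in X\vee(X^{\perp}\cap E)\subseteq E$. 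A short computation shows $\{y_n\}$ is orthonormal, hence an orthonormal basis of $Y$, so $P_Ye_n=\sqrt{\mu_n}\,y_n$ and therefore $P_XP_YP_Xe_n=\mu_ne_n$; thus $(P_XP_YP_X)|_X=D$. Moreover $(P_X-P_Y)e_n=(1-\mu_n)e_n-\sqrt{\mu_n(1-\mu_n)}\,f_n$ has norm $\sqrt{1-\mu_n}$, giving $\|(P_X-P_Y)|_X\|\le\sqrt{\sup_i(1-\lambda_i)}$, and for $w\in X^{\perp}$ one gets $\|(P_X-P_Y)w\|^2=\sum_n(1-\mu_n)|\langle w,f_n\rangle|^2\le\sup_i(1-\lambda_i)\|w\|^2$ by Bessel; splitting $H=X\oplus X^{\perp}$ then yields $\|P_X-P_Y\|\le 2\sqrt{\sup_i(1-\lambda_i)}<\eta$. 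Also $\|D\|=\sup_n\mu_n=\max_i\lambda_i<1$, so $I-D$ is invertible on $X$, and since $x\in X\cap Y$ forces $Dx=P_XP_YP_Xx=x$, we obtain $X\cap Y=\{0\}$. The remaining estimate $\|(P_XP_YP_X)^{s(j)}-P_{X_j}\|<\varepsilon$ is exactly the scalar computation of the first paragraph evaluated at $s=s(j)$ (the relevant suprema being maxima over the finite value set $\{\lambda_1,\dots,\lambda_{k+1}\}$, hence strictly below $\varepsilon$).

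The one genuinely delicate point is the simultaneous choice of the $s(i)$ and $\lambda_i$: forcing $\|P_X-P_Y\|<\eta$ confines every $\lambda_i$ to within $\eta^{2}/4$ of $1$, yet we still need $\lambda_i^{\,s(i-1)}$ nearly $0$, which compels $s(i-1)$ to be astronomically larger than $s(i)$. This is precisely why the statement only asserts the existence of the $s(j)$ and requires them to be fixed before the subspaces are revealed. Everything else is routine Hilbert-space bookkeeping, essentially the two-subspace spectral picture already exploited in Theorem \ref{von neumann}.
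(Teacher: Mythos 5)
Your proposal is correct and is essentially the paper's own argument: your $Y=\bigvee\{\sqrt{\mu_n}\,e_n+\sqrt{1-\mu_n}\,f_n\}$ is exactly the paper's $\bigvee\{e_i+\gamma_i w_i\}$ after normalising and setting $\mu=1/(1+\gamma^2)$, with the same layer-wise constant tilts on $X_j\ominus X_{j-1}$ and the same alternating greedy choice of the exponents $s(j)$ and tilt parameters. The diagonal-operator framing and the $\lambda_i=e^{-c_i}$ parametrisation are cosmetic differences only.
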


\begin{proof}
We may assume that $0<\eta<1$; if the statement holds in this case, then it clearly holds for any $\eta >0$. We begin by fixing $0<\beta_{k+1}<\frac{\eta}{2}$, and choosing $s(k) > a$ large enough that $\frac{1}{(1+\beta_{k+1}^2)^{s(k)}} < \varepsilon$. We then inductively choose numbers $\beta_k,s(k-1),\beta_{k-1},s(k-2),\dots ,s(1),\beta_1$ such that
\begin{equation} \label{inductive numbers}
\begin{gathered}
\beta_{k+1} > \beta_k >\dots >\beta_1 > 0, \\
a < s(k) < s(k-1) < \dots  < s(1), \\
\frac{1}{(1+\beta_{j+1}^2)^{s(j)}} < \varepsilon \textnormal{ \, and \, } \Big{|}\frac{1}{(1+\beta_{j}^2)^{s(j)}} - 1\Big{|}< \varepsilon, \quad j \in \{1,\dots ,k\}.
\end{gathered}
\end{equation}
We will show that these $s(j)$'s are as required. 
Since $X_1, \dots, X_k$ and $X$ are closed subspaces of a separable Hilbert space $H$, they are themselves separable Hilbert spaces under the same norm. A Hilbert space is separable if and only if it has an, at most, countable orthonormal basis. Hence we can find an, at most, countable orthonormal basis $\{e_i\}_{i\in I}$ in $X$ such that there are sets $\emptyset = I_0 \subset I_1 \subset \dots  \subset I_k \subset I_{k+1} = I$ with the property that $\{e_i\}_{i\in I_j}$ is an orthonormal basis in $X_j$ for $j \in \{1,\dots ,k\}$. 

For $e_i \in I_j \setminus I_{j-1}$ we define $\gamma_i = \beta_j$. Since $\dim (X^\perp \cap E) = \infty$, we can find a set of orthonormal vectors $\{w_i\}_{i\in I}$ in $X^\perp \cap E$ indexed by $I$. Let $Y = \bigvee \{e_i + \gamma_iw_i : i\in I\}$. Then for $i \in I$, it is a simple check that
\begin{equation*}
e_i = \underbrace{\frac{e_i + \gamma_iw_i}{{1+\gamma_i^2}}}_{\in Y} + \underbrace{e_i - \frac{e_i + \gamma_iw_i}{1+\gamma_i^2}}_{\in Y^\perp}. 
\end{equation*}
Hence $P_Ye_i = \frac{e_i + \gamma_iw_i}{1+\gamma_i^2}$. Therefore, since $e_i \in X$ and $w_i \in X^\perp$,
\begin{equation*}
(P_XP_YP_X)e_i = P_XP_Y(P_Xe_i) = P_X(P_Ye_i) = P_X(\frac{e_i + \gamma_iw_i}{1+\gamma_i^2}) = \frac{e_i}{1+\gamma_i^2}.
\end{equation*}
Hence, we have
\begin{equation*}
(P_XP_YP_X)^me_i = \frac{e_i}{(1+\gamma_i^2)^m}, \quad m\in\mathbb{N}.
\end{equation*}
Let $x \in X$. Writing it as $x = \sum_{i\in I} a_ie_i$, we see that Lemma \ref{foundational}\ref{pythagoras}, $\|e_i\| = 1$, (\ref{inductive numbers}), and $\gamma_i = \beta_j$, give
\begin{equation} \label{projections bound X}
\begin{aligned}
&\|(P_XP_YP_X)^{s(j)}x - P_{X_j}x\|^2 \\
&= \Big{\|} \sum_{i\in I} a_i \frac{e_i}{(1+\gamma_i^2)^{s(j)}} - \sum_{i\in I_j} a_ie_i \Big{\|}^2 \\
&= \Big{\|} \sum_{i\in I_j} a_ie_i\Big{(}-1+\frac{1}{(1+\gamma_i^2)^{s(j)}}\Big{)} +  \sum_{i\in I\setminus I_j} a_i\frac{e_i}{(1+\gamma_i^2)^{s(j)}} \Big{\|}^2 \\
&= \sum_{i\in I_j} \Big{\|}a_ie_i\Big{(}-1+\frac{1}{(1+\gamma_i^2)^{s(j)}}\Big{)}\Big{\|}^2 +  \sum_{i\in I\setminus I_j} \Big{\|}a_i\frac{e_i}{(1+\gamma_i^2)^{s(j)}} \Big{\|}^2 \\
&= \sum_{i\in I_j} |a_i|^2 \Big{(}1- \frac{1}{(1+\gamma_i^2)^{s(j)}}\Big{)}^2 + \sum_{i\in I\setminus I_j} |a_i|^2 \frac{1}{(1+\gamma_i^2)^{2s(j)}} \\
&\leq \sum_{i\in I_j} |a_i|^2  \varepsilon^2 + \sum_{i\in I\setminus I_j} |a_i|^2\varepsilon^2 \\
&= \varepsilon^2 \sum_{i\in I} |a_i|^2 = \varepsilon^2 \|x\|^2.
\end{aligned}
\end{equation}
We note that $P_{X_j}P_X = P_{X_j}$ (since $X_j \subset X$), and recall that projections are idempotent. Hence, by (\ref{projections bound X}), we have that for any $z \in H$ and $j \in \{1,\dots ,k\}$, 
\begin{align*}
\|(P_XP_YP_X)^{s(j)}z - P_{X_j}z\|^2 &= \|(P_XP_YP_X)^{s(j)}(P_Xz) - P_{X_j}(P_Xz)\|^2 \\
&\leq \varepsilon^2 \|P_Xz\|^2.
\end{align*}
Therefore,
\begin{equation*}
\|(P_XP_YP_X)^{s(j)} - P_{X_j}\| < \varepsilon, \quad j \in \{1,\dots ,k\}.
\end{equation*}
It remains to verify that $\|P_X-P_Y\| < \eta$, and that $X \cap Y = \{0\}$. For the latter, suppose that $z \in X \cap Y$. Then since $z \in X$, $z$ can be written as $\sum_{i\in I} b_ie_i$, and since $z \in Y$, $z$ can be written as $\sum_{i\in I} c_i(e_i+\gamma_iw_i)$ (where each $b_i,c_i \in \mathbb{F}$). Since each $w_i \in X^\perp$, and each $e_i \in X$, then for every $j \in I$,
\begin{equation*}
b_j = \langle z, e_j \rangle = \langle \sum_{i\in I} c_i(e_i+\gamma_iw_i), e_j \rangle = \langle \sum_{i\in I} c_ie_i,e_j \rangle = c_j.
\end{equation*}
Therefore, \[0 = z - z = \sum_{i\in I} b_i(e_i+\gamma_iw_i) - \sum_{i\in I} b_ie_i = \sum_{i\in I} b_i\gamma_iw_i.\] Hence $b_i$ = 0 for every $i \in I$, and so $z = \sum_{i\in I} b_ie_i = 0$. 

Finally, we want to show that $\|P_X-P_Y\| < \eta$. It is known that if $U = \bigvee \{f_i : i\in I\}$ for some orthonormal set $\{f_i\}_{i \in I}$, we have $P_Ux = \sum_{i\in I} \langle x,f_i \rangle f_i $ for all $x \in Z$. This, along with $0<\gamma_i<\beta_k+1<\frac{\eta}{2}<1$ and $|a-b|^2 \leq 2|a|^2 + 2|b|^2$, gives that for any $0\neq z \in H$,
\begin{equation*}
\begin{aligned}
&\|P_Xz - P_Yz\|^2 \\
&= \Big{\|} \sum_{i\in I} \langle e_i,z \rangle e_i - \sum_{i\in I} \frac{e_i + \gamma_iw_i }{1+\gamma_i^2} \langle e_i+\gamma_iw_i,z \rangle \Big{\|}^2 \\
&\leq \sum_{i\in I} \frac{1}{(1+\gamma_i^2)^2} \big{|}\gamma_i^2\langle e_i,z\rangle - \gamma_i \langle w_i,z \rangle \big{|}^2 + \frac{\eta^2}{4} \sum_{i\in I} \big{|}\frac{1}{1+\gamma_i^2} \langle e_i + \gamma_iw_i,z \rangle \big{|}^2 \\
&\leq 2(\eta/2)^4\|z\|^2 + 2(\eta /2)^2\|z\|^2 + (\eta^2/4)\|z\|^2 \\ 
&< \eta^2 \|z\|^2.
\end{aligned}
\end{equation*}
So indeed $\|P_X-P_Y\| < \eta$.
\end{proof}

As before, let $u$ and $v$ be orthonormal vectors, $W=u\vee v$, and $\varepsilon >0$. We proceed to make use of Lemmas \ref{quarter circle} and \ref{replace projection} to find a word $\psi$, and two (almost parallel) subspaces $X$ and $Y$, such that $\|\psi(P_W,P_X,P_Y)u - v\| < 3\varepsilon$.

\begin{lemma} \label{word and almost parallel subspaces}
For every $\varepsilon >0$, there exists $N=N(\varepsilon)$, such that for every $\eta>0$, there exists $\psi \in S_3$ with $|\psi_1| \leq N$ that has the following property.

Let $X\subset E$ be subspaces of $H$ such that $X$ is separable and $\dim (X^\perp \cap E) = \infty$. Let $u,v \in X$ be vectors such that $\|u\|=\|v\|=1$ and $u\perp v$. Let $W = u \vee v$. Then there exists a subspace $Y \subset E$ such that $X \cap Y = \{0\}$, $\|P_X-P_Y\|<\eta$, and
\begin{equation*}
\|\psi(P_W,P_X,P_Y)u - v\| < 3\varepsilon.
\end{equation*}
\end{lemma}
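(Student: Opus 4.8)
The plan is to manufacture $\psi$ by substituting, into the word $\varphi$ produced by Lemma~\ref{quarter circle}, the product words $(a_2a_3a_2)^{s(j)}$ produced by Lemma~\ref{replace projection}: Lemma~\ref{quarter circle} gets us from $u$ to $v$ using finitely many projections onto a nested chain $X_1\subset\dots\subset X_k$ together with $P_W$, and Lemma~\ref{replace projection} lets us simulate each $P_{X_j}$ by a high power of $P_XP_YP_X$ for a single $Y$ close to $X$. The conditions $X\cap Y=\{0\}$ and $\|P_X-P_Y\|<\eta$ will come for free from Lemma~\ref{replace projection}, and the final norm estimate will be a two-term triangle inequality controlled by Corollary~\ref{bound projection word}.

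First I would fix $\varepsilon>0$, let $k=k(\varepsilon)$, and let $\varphi\in S_{k+1}$ be the word from Lemma~\ref{quarter circle}, with its first slot reserved for $P_W$ and slots $2,\dots,k+1$ for $P_{X_1},\dots,P_{X_k}$; since this $\varphi$ depends only on $\varepsilon$, I set $N=N(\varepsilon)$ to be the number of occurrences in $\varphi$ of the letter corresponding to $P_W$, and $L=L(\varepsilon)=|\varphi|$. Now fix $\eta>0$ and apply Lemma~\ref{replace projection} with parameters $k$, $\varepsilon/L$ (in the role of its $\varepsilon$), $\eta$, and $a=1$, obtaining natural numbers $1<s(k)<\dots<s(1)$ that depend only on $\varepsilon$ and $\eta$. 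Define $\psi\in S_3$ to be the word obtained from $\varphi$ by replacing each occurrence of the $P_W$-letter by $a_1$ and each occurrence of the $P_{X_j}$-letter by the word $(a_2a_3a_2)^{s(j)}$. Since the substitutions for the $P_{X_j}$-letters introduce only $a_2$ and $a_3$, we get $|\psi_1|=N$, and $\psi$ depends only on $\varepsilon$ and $\eta$ (not on $X,E,u,v$), exactly as the statement demands.

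To verify the property: given $X\subset E$, $u,v\in X$ orthonormal, and $W=u\vee v$, Lemma~\ref{quarter circle} produces $X_1\subset\dots\subset X_k\subset X$ with $\dim X_j=j+1$ and $\|\varphi(P_W,P_{X_1},\dots,P_{X_k})u-v\|<2\varepsilon$. Feeding the chain $X_1\subset\dots\subset X_k\subset X\subset E$ into Lemma~\ref{replace projection} with the $s(j)$ already fixed yields $Y\subset E$ with $X\cap Y=\{0\}$, $\|P_X-P_Y\|<\eta$, and $\|(P_XP_YP_X)^{s(j)}-P_{X_j}\|<\varepsilon/L$ for each $j$. By construction of $\psi$,
\begin{equation*}
\psi(P_W,P_X,P_Y)=\varphi\big(P_W,(P_XP_YP_X)^{s(1)},\dots,(P_XP_YP_X)^{s(k)}\big),
\end{equation*}
so Corollary~\ref{bound projection word} (comparing the $k+1$ contractions $P_W,(P_XP_YP_X)^{s(j)}$ with $P_W,P_{X_j}$) gives
\begin{equation*}
\big\|\psi(P_W,P_X,P_Y)-\varphi(P_W,P_{X_1},\dots,P_{X_k})\big\|\le|\varphi|\cdot\max_{1\le j\le k}\big\|(P_XP_YP_X)^{s(j)}-P_{X_j}\big\|<L\cdot\frac{\varepsilon}{L}=\varepsilon.
\end{equation*}
Combining with $\|u\|=1$ and the triangle inequality yields $\|\psi(P_W,P_X,P_Y)u-v\|<\varepsilon+2\varepsilon=3\varepsilon$, as required.

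The main obstacle is not any hard estimate but bookkeeping the order of quantifiers: one must extract $\varphi$ (hence $N$ and $L$) from $\varepsilon$ alone, then the $s(j)$ from $\varepsilon$ and $\eta$ alone, and only afterwards invoke Lemmas~\ref{quarter circle} and~\ref{replace projection} for the particular data $X,E,u,v$, so that a single $\psi$ works uniformly and $|\psi_1|$ stays bounded by $N(\varepsilon)$. The one place where the numerics are forced is that the target error $3\varepsilon$ must be split as $2\varepsilon$ (the Lemma~\ref{quarter circle} error) plus $\varepsilon$ (the substitution error), which is precisely why one must run Lemma~\ref{replace projection} with tolerance $\varepsilon/|\varphi|$ rather than $\varepsilon$; everything else is routine.
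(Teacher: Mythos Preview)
Your proposal is correct and follows essentially the same route as the paper: choose $\varphi$ from Lemma~\ref{quarter circle} (giving $N=|\varphi_1|$), apply Lemma~\ref{replace projection} with tolerance $\varepsilon/|\varphi|$ to get the exponents $s(j)$, define $\psi$ by substituting $(a_2a_3a_2)^{s(j)}$ for the $j$th letter of $\varphi$, and finish with Corollary~\ref{bound projection word} and the triangle inequality. Your explicit bookkeeping of the quantifier order is in fact a bit cleaner than the paper's exposition, but the argument is the same.
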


\begin{proof}
Let $\varepsilon >0$ and $\eta >0$ be given. Let $\varphi \in S_{k(\varepsilon)+1}$ be as in Lemma \ref{quarter circle}, and let $N = |\varphi_1|$.

Since $\|u\|=\|v\|=1$ and $u\perp v$, we can apply Lemma \ref{quarter circle} to see that there exist subspaces $X_1 \subset \dots  \subset X_{k(\varepsilon)} \subset X$ such that
\begin{equation*}
\| \varphi(P_W,P_{X_1},\dots ,P_{X_{k(\varepsilon)}})u - v\| < 2\varepsilon.
\end{equation*}
For $k=k(\varepsilon)$, the given $\eta$, $a=1$, and $\varepsilon$ replaced by $\frac{\varepsilon}{|\varphi|}$, we choose natural numbers $s(k) < s(k-1) < \dots  < s(1)$ as in Lemma \ref{replace projection}. Since $X$ is separable and $\dim X^\perp \cap E = \infty$, Lemma \ref{replace projection} gives that there exists a subspace $Y$ of $E$ such that $X \cap Y = \{0\}$, $\|P_X-P_Y\|<\eta$ and for each $j\in \{1,\dots ,k\}$,
\begin{equation*}
\|(P_XP_YP_X)^{s(j)} - P_{X_j}\| < \frac{\varepsilon}{|\varphi|}.
\end{equation*}
We then define $\psi$ to be $\varphi$, but with $a_i$ replaced by $(a_2a_3a_2)^{s(i-1)}$ for each $i\in \{2,\dots ,k+1\}$, so that
\begin{equation*}
\psi(P_W,P_X,P_Y) = \varphi(P_W,(P_XP_YP_X)^{s(1)},\dots ,(P_XP_YP_X)^{s(k)}).
\end{equation*}
It is simple to see that $|\psi_1| = |\varphi_1| = N$. Finally, by Corollary \ref{bound projection word},
\begin{align*}
& \|\psi(P_W,P_X,P_Y)u - v\| \\
&= \|\varphi(P_W,(P_XP_YP_X)^{s(1)},\dots ,(P_XP_YP_X)^{s(k)})u - v\| \\
& \leq \| \varphi(P_W,(P_XP_YP_X)^{s(1)},\dots ,(P_XP_YP_X)^{s(k)})u - \varphi(P_W,P_{X_1},\dots ,P_{X_k})u \| \\
& \quad + \|\varphi(P_W,P_{X_1},\dots ,P_{X_k})u - v\| \\
& \leq |\varphi| \cdot \frac{\varepsilon}{|\varphi|} + 2\varepsilon = 3\varepsilon.
\end{align*}
This concludes the proof.
\end{proof}

We may now make use of Corollary \ref{contraction inequality corollary} to show that we in fact have some freedom in our choice of  $W$, $X$, and $Y$ above.

\begin{lemma} \label{freedom choice spaces}
For every $\varepsilon>0$, there exists $\delta = \delta(\varepsilon)$ such that for every $\eta >0$, there exists $\psi \in S_3$ with the following property.

Let $X\subset E$ be subspaces of $H$ such that X is separable and $\dim X=\dim X^\perp \cap E = \infty$. Let $u,v \in X$ be vectors such that $\|u\|=\|v\|=1$ and $u\perp v$. Let $W=u\vee v$. Then there exists a subspace $Y\subset E$ such that $X \cap Y = \{0\}$ and $\|P_X - P_Y\|<\eta$ with the following property. If $X',Y',Z$ are subspaces such that $X',Y' \subset E$ and $\|P_W-P_ZP_E\|<\delta$, then
\begin{equation*}
\|\psi(P_Z,P_{X\vee X'},P_{Y\vee Y'})u - v\| < 4\varepsilon. 
\end{equation*}
\end{lemma}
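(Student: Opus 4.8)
The plan is to obtain this essentially for free from Lemma~\ref{word and almost parallel subspaces} combined with the perturbation estimate in Corollary~\ref{contraction inequality corollary}. Given $\varepsilon>0$, I would let $N=N(\varepsilon)$ be the constant supplied by Lemma~\ref{word and almost parallel subspaces} and set $\delta=\delta(\varepsilon)=\tfrac{\varepsilon}{N+1}$. Then, for a given $\eta>0$, I would take $\psi\in S_3$ with $|\psi_1|\le N$ as produced by Lemma~\ref{word and almost parallel subspaces}; this is the word that will witness the conclusion. For the spaces: given $X\subset E$ and orthonormal $u,v\in X$ with $W=u\vee v$ as in the statement --- noting the standing hypothesis $\dim X=\dim(X^\perp\cap E)=\infty$ is stronger than what Lemma~\ref{word and almost parallel subspaces} needs --- that lemma furnishes a subspace $Y\subset E$ with $X\cap Y=\{0\}$, $\|P_X-P_Y\|<\eta$, and $\|\psi(P_W,P_X,P_Y)u-v\|<3\varepsilon$. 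This $Y$ is exactly the one demanded by the present lemma, so the only thing left to check is the displayed inequality for $\psi(P_Z,P_{X\vee X'},P_{Y\vee Y'})u$.

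The main step is a single triangle inequality:
\begin{align*}
\|\psi(P_Z,P_{X\vee X'},P_{Y\vee Y'})u-v\|
&\le\|\psi(P_Z,P_{X\vee X'},P_{Y\vee Y'})u-\psi(P_W,P_X,P_Y)u\|\\
&\quad+\|\psi(P_W,P_X,P_Y)u-v\|,
\end{align*}
whose second summand is $<3\varepsilon$ by the choice of $\psi$ and $Y$. For the first summand, write $A=\psi(P_W,P_X,P_Y)$ and $B=\psi(P_Z,P_{X\vee X'},P_{Y\vee Y'})$; since $u\in X\subset E$ we have $u=P_Eu$, so $Au-Bu=(A-B)P_Eu$, and as $\|u\|=1$ the first summand is at most $\|AP_E-BP_E\|$. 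Now $W=u\vee v\subset X\subset E$, $X\subset E$, $Y\subset E$, while $X',Y'$ are orthogonal to $E$, so Corollary~\ref{contraction inequality corollary} applies and gives
\begin{equation*}
\|AP_E-BP_E\|\le|\psi_1|\cdot\|P_ZP_E-P_W\|<N\cdot\tfrac{\varepsilon}{N+1}<\varepsilon.
\end{equation*}
Adding the two bounds gives $\|\psi(P_Z,P_{X\vee X'},P_{Y\vee Y'})u-v\|<4\varepsilon$, as required.

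I do not expect a genuine obstacle: once Lemma~\ref{word and almost parallel subspaces} and Corollary~\ref{contraction inequality corollary} are in hand, the argument is bookkeeping. The two points that need attention are (i) the quantifier order --- $\delta$ must depend on $\varepsilon$ alone, which works because $N=N(\varepsilon)$ does, while $\psi$ is permitted to depend on $\eta$, which is precisely what Lemma~\ref{word and almost parallel subspaces} allows --- and (ii) verifying the hypotheses of Corollary~\ref{contraction inequality corollary}, namely that $W,X,Y$ all lie in $E$ (immediate, since $u,v\in X\subset E$ and the output $Y$ of Lemma~\ref{word and almost parallel subspaces} lies in $E$) and that $X',Y'$ sit orthogonally to $E$. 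It is worth stressing that the uniform bound $|\psi_1|\le N$ is the crucial feature: it is what prevents the perturbation term from blowing up as $\eta\to0$ (which forces $\psi$ to be a longer and longer word, though without increasing $|\psi_1|$), and without it Corollary~\ref{contraction inequality corollary} would be useless here.
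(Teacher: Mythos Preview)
Your proposal is correct and follows essentially the same route as the paper: pick $N$ and $\psi$ from Lemma~\ref{word and almost parallel subspaces}, set $\delta$ proportional to $\varepsilon/N$ (the paper takes $\delta=\varepsilon/N$, you take $\varepsilon/(N+1)$; either works), obtain $Y$ from that lemma, and then combine a single triangle inequality with Corollary~\ref{contraction inequality corollary} to bound the perturbation term by $|\psi_1|\cdot\|P_ZP_E-P_W\|<\varepsilon$. Your reading of the hypothesis on $X',Y'$ as $X',Y'\perp E$ (rather than $X',Y'\subset E$ as printed in the statement) is the intended one, matching both the requirement of Corollary~\ref{contraction inequality corollary} and the way the lemma is invoked in Lemma~\ref{almost there eva}.
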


\begin{proof}
Given $\varepsilon >0$, we pick $N \in \mathbb{N}$ as in Lemma \ref{word and almost parallel subspaces}, and let $\delta = \frac{\varepsilon}{N}$. For these $\varepsilon$ and $N$, and a given $\eta > 0$, we choose $\psi$ according to  Lemma \ref{word and almost parallel subspaces}. For a given subspace $X$, we also choose $Y$ according to this lemma. Let $X',Y',Z$ be as above. Then applying both Corollary \ref{contraction inequality corollary} and Lemma \ref{word and almost parallel subspaces}, we have
\begin{align*}
& \|\psi(P_Z,P_{X\vee X'},P_{Y\vee Y'})u - v\| \\
& \leq \|\psi(P_Z,P_{X\vee X'},P_{Y\vee Y'})u - \psi(P_W,P_X,P_Y)u\| + \|\psi(P_W,P_X,P_Y)u - v\| \\
&\leq |\psi_1| \cdot \|P_W-P_ZP_E\| + 3\varepsilon \\
&\leq N\delta + 3\varepsilon = 4\varepsilon. \qedhere
\end{align*}
\end{proof}

\subsection{`Gluing' together the triples}

The last step in proving Theorem \ref{big result kopecka} uses Lemma \ref{freedom choice spaces} to show that given an orthonormal set $\{e_i\}_{i=1}^\infty$ with an infinite-dimensional orthogonal complement, we can construct three closed subspaces $X,Y,Z$ of $H$ and words $\Psi^{(i)}$ such that $\Psi^{(i)}(P_Z,P_X,P_Y)e_i$ is close to $e_{i+1}$ for every $i \in \mathbb{N}$. Kopeck\'a and Paszkiewicz refer to this as `gluing' together countably many of the triples $W$, $X$, and $Y$ considered in Section \ref{triples} \cite{KoPa17}.

\begin{lemma} \label{almost there eva}
For any $\varepsilon_i >0$ where $i\in\mathbb{N}$, there exists $\Psi^{(i)} \in \mathcal{S}_3$ with the following property.

Suppose $\{e_i\}_{i=1}^{\infty}$ is an orthonormal set in $H$ with an infinite-dimensional orthogonal complement. Then there are three closed subspaces $X,Y,Z$ of $H$ such that
\begin{equation} \label{Psi projection inequality}
\|\Psi^{(i)}(P_Z,P_X,P_Y)e_i - e_{i+1}\| < 4\varepsilon_i, \quad i\in \mathbb{N}.
\end{equation}
\end{lemma}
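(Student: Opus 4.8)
The plan is to apply Lemma~\ref{freedom choice spaces} once for each $i$, with $u=e_i$ and $v=e_{i+1}$, and then to ``glue'' the local subspaces it produces into three fixed subspaces $X$, $Y$, $Z$. A useful feature of Lemma~\ref{freedom choice spaces} for this is that it furnishes the word $\psi$ and the tolerance $\delta$ from $\varepsilon$ alone (together with a choice of $\eta$) and \emph{before} the ambient space $E$ or the auxiliary subspaces are specified; hence the word $\Psi^{(i)}:=\psi$ produced at step $i$ depends only on $\varepsilon_i$, say after fixing $\eta_i=\varepsilon_i$, which is exactly what the statement asks for.

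First I would carve out workspaces. Since $\{e_i\}^\perp$ is infinite-dimensional, choose for each $i$ a countably infinite orthonormal family $\{f_{i,n}\}_{n\ge1}$ so that $\{e_i\}\cup\{f_{i,n}\}$ is orthonormal, and set $E_i:=\vee\{e_i,e_{i+1}\}\vee\bigvee_n f_{i,n}$. Then $e_i,e_{i+1}\in E_i$, $e_j\perp E_i$ for $j\notin\{i,i+1\}$, and the $E_i$ are pairwise orthogonal except for the ``hinge'' overlaps $E_i\cap E_{i+1}=\vee e_{i+1}$. Inside $E_i$ pick a separable $X^{(i)}\ni e_i,e_{i+1}$ with $\dim X^{(i)}=\dim\big((X^{(i)})^\perp\cap E_i\big)=\infty$ (split the private family in two); apply Lemma~\ref{freedom choice spaces} with $\varepsilon=\varepsilon_i$, $W=W_i:=e_i\vee e_{i+1}$, $E=E_i$ and $X=X^{(i)}$ to obtain $\psi^{(i)}$ and $Y^{(i)}\subseteq E_i$; and finally put
\[
X:=\bigvee_{i\ge1}X^{(i)},\qquad Y:=\bigvee_{i\ge1}Y^{(i)},\qquad Z:=\bigvee\{e_i:i\ge1\},\qquad \Psi^{(i)}:=\psi^{(i)}.
\]

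The remaining task is to check, for each fixed $i$, that the hypotheses of the implication in Lemma~\ref{freedom choice spaces} hold with $E=E_i$. One part is immediate: since $e_i,e_{i+1}\in E_i$ and $e_j\perp E_i$ otherwise, $P_ZP_{E_i}x=\langle x,e_i\rangle e_i+\langle x,e_{i+1}\rangle e_{i+1}=P_{W_i}x$ for every $x$, so $\|P_{W_i}-P_ZP_{E_i}\|=0<\delta(\varepsilon_i)$. The other part is to write $X=X^{(i)}\vee X'$ and $Y=Y^{(i)}\vee Y'$ with $X',Y'\perp E_i$. For $X$ this goes through: the only $E_j$ not orthogonal to $E_i$ are $E_{i-1}$ and $E_{i+1}$, meeting $E_i$ in $\vee e_i$ and $\vee e_{i+1}$, so $P_{E_i}$ leaves $\bigvee_{j\ne i}X^{(j)}$ invariant with image $\vee\{e_i,e_{i+1}\}\subseteq X^{(i)}$ (using $e_i\in X^{(i-1)}$, $e_{i+1}\in X^{(i+1)}$); hence $\bigvee_{j\ne i}X^{(j)}=\vee\{e_i,e_{i+1}\}\oplus\big(\big(\bigvee_{j\ne i}X^{(j)}\big)\cap E_i^\perp\big)$, and we may take $X'=\big(\bigvee_{j\ne i}X^{(j)}\big)\cap E_i^\perp$.

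The corresponding statement for $Y$ is where the real work lies, and I expect it to be the main obstacle. The same reduction would need $\big(\bigvee_{j\ne i}Y^{(j)}\big)\cap E_i\subseteq Y^{(i)}$, i.e.\ $e_i,e_{i+1}\in Y^{(i)}$; but the $Y^{(i)}$ furnished by Lemma~\ref{freedom choice spaces} is only \emph{almost} parallel to $X^{(i)}$ and, being assembled from the perturbations of Lemma~\ref{replace projection}, need not contain $e_i$ or $e_{i+1}$ exactly. Handling this is precisely the ``gluing'' step: one must run the constructions of Lemmas~\ref{quarter circle}--\ref{word and almost parallel subspaces} so that the hinge directions are left unperturbed and hence $e_i,e_{i+1}\in Y^{(i)}$ --- which is exactly why Lemma~\ref{freedom choice spaces} is formulated to control the enlargements $P_{Y\vee Y'}$ rather than $P_Y$ alone --- and then track that $Y^{(i)}\subseteq E_i$ and $\big(\bigvee_{j\ne i}Y^{(j)}\big)\cap E_i=\vee\{e_i,e_{i+1}\}$. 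Once this compatibility is in place, substituting the resulting $X'$ and $Y'$ into Lemma~\ref{freedom choice spaces} gives $\|\Psi^{(i)}(P_Z,P_X,P_Y)e_i-e_{i+1}\|<4\varepsilon_i$ for all $i$, which is the assertion. A harmless by-product is that now $X^{(i)}\cap Y^{(i)}=\vee\{e_i,e_{i+1}\}$, so $X\cap Y\ne\{0\}$; this is irrelevant here, and for Theorem~\ref{big result kopecka} only the triple intersection $X\cap Y\cap Z$ need be trivial, which can be secured by taking the perturbation parameters summably small.
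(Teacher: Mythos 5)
Your overall strategy (localise to workspaces $E_i$, invoke Lemma~\ref{freedom choice spaces} with $u=e_i$, $v=e_{i+1}$, then glue) matches the paper's, and your treatment of $X$ is correct. But the step you yourself flag as ``where the real work lies'' is a genuine gap, and the repair you sketch cannot work. You propose to arrange $e_i,e_{i+1}\in Y^{(i)}$ so that $\bigvee_{j\ne i}Y^{(j)}$ meets $E_i$ only inside $Y^{(i)}$. This is incompatible with the mechanism that produces $Y^{(i)}$: Lemma~\ref{replace projection} builds $Y$ as the span of tilted basis vectors $e+\gamma w$ with \emph{every} tilt $\gamma>0$, precisely so that $(P_XP_YP_X)^m$ contracts each direction of $X$ at rate $(1+\gamma^2)^{-m}$ and the exponents $s(j)$ can select the nested subspaces $X_j$. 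If $e_i$ were a common fixed vector of $P_{X^{(i)}}$ and $P_{Y^{(i)}}$, then $(P_XP_YP_X)^{s(j)}e_i=e_i$ for all $j$, whereas $P_{X_j}e_i\ne e_i$ (the subspaces $X_j$ of Lemma~\ref{quarter circle} contain the perturbed vector $h_0+\alpha_0 z_0$ rather than $h_0=e_i$ itself, and $\alpha_0>0$ is forced by the von Neumann step, which needs $W\cap X_j'=\vee h_j$). So the approximation $\|(P_XP_YP_X)^{s(j)}-P_{X_j}\|<\varepsilon$ would fail; indeed Lemma~\ref{word and almost parallel subspaces} explicitly delivers $X\cap Y=\{0\}$, so $e_i\in X^{(i)}\cap Y^{(i)}$ is ruled out. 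The conflict is structural, not something that can be cured by taking the perturbation parameters summably small.

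The paper resolves this with an idea absent from your proposal: it does not introduce $Z=\bigvee\{e_i : i\ge1\}$ at all. Instead it produces, for every $i$, a tilted copy $Y_i\subset E_i$ of $X_i$ with $\|P_{X_i}-P_{Y_i}\|<\eta_i$, and then splits these by parity, $Y=\bigvee_i Y_{2i}$ and $Z=\bigvee_i Y_{2i+1}$, alternating the roles of the first and third letters in the word: $\Psi^{(i)}=\psi^{(i)}(P_Z,P_X,P_Y)$ for $i$ even and $\psi^{(i)}(P_Y,P_X,P_Z)$ for $i$ odd. For even $i$ the subspace $Y$ then genuinely decomposes as $Y_i\vee Y'$ with $Y'\perp E_i$, because all other even-indexed $Y_j$ lie in $E_j\perp E_i$; and the hinge $W_i$ is approximated not exactly but to within $\delta_i$ via
\begin{equation*}
P_ZP_{E_i}=P_{Y_{i-1}\vee Y_{i+1}}P_{E_i}\approx P_{X_{i-1}\vee X_{i+1}}P_{E_i}=P_{W_i},
\end{equation*}
which is exactly why the tolerances are coupled as $\eta_i=\min\{\delta_{i-1},\delta_{i+1}\}$ rather than $\eta_i=\varepsilon_i$. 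This is the purpose of the allowance $\|P_W-P_ZP_E\|<\delta$ in Lemma~\ref{freedom choice spaces}, and it is the missing ingredient in your argument.
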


\begin{proof}
For each $\varepsilon_i >0$ ($i \in \mathbb{N}$), we define $\delta_i = \delta(\varepsilon_i)$ as in Lemma \ref{freedom choice spaces}. We set $\delta_0 = 1$ and $\eta_i = \min\{\delta_{i-1},\delta_{i+1}\}$, and choose $\psi^{(i)} \in S_3$ as in Lemma \ref{freedom choice spaces}. We define $\Psi^{(i)}$ as follows,
\begin{equation*}
\Psi^{(i)}(P_Z,P_X,P_Y) =
\begin{cases}
\psi^{(i)}(P_Z,P_X,P_Y) & \text{if $i$ is even},\\
\psi^{(i)}(P_Y,P_X,P_Z) & \text{if $i$ is odd}.
\end{cases}
\end{equation*}
We begin by finding, for each $i \in \mathbb{N}$, closed infinite-dimensional subspaces $E_i$ of $H$ such that
\begin{equation} \label{finding ei}
\begin{gathered}
e_i,e_{i+1} \in E_i, \\
P_{\vee e_{i+1}} = P_{E_i}P_{E_{i+1}} = P_{E_{i+1}}P_{E_i}, \\
E_i \perp E_j \textnormal{ if } |i-j| \geq 2.
\end{gathered}
\end{equation}
We first note that since $\{e_i\}_{i=1}^\infty$ has an infinite-dimensional orthogonal complement, we may find an orthonormal set $\{f_i\}_{i=1}^\infty$, such that $e_i \perp f_i$ for every $i,j \in \mathbb{N}$. We then consider the infinite-dimensional spaces \[F_k = \bigvee \big{\{}f_i : i=(p_k)^r \textnormal{ for some } r\in\mathbb{N} \big{\}} ,\] where $p_k$ is the $k\textsuperscript{th}$ prime number. We set \[E_{i} = \langle e_{i} \rangle \oplus \langle e_{i+1} \rangle \oplus F_{i},\] and note these $E_i$ do indeed satisfy (\ref{finding ei}).

For each $i \in \mathbb{N}$, we find a closed subspace $X_i \subset E_i$, such that $e_i,e_{i+1} \in X_i$, and $\dim X_i = \dim(X_i^\perp \cap E_i) = \infty$. We then have $X_n = \langle e_{n} \rangle \oplus \langle e_{n+1} \rangle \oplus \widetilde{F_{n}}$ for some closed infinite-dimensional subspace $\widetilde{F_{n}} \subset F_n$, and
\begin{equation*} \label{subspace conditions}
\begin{gathered}
e_i,e_{i+1} \in X_i, \\
P_{\vee e_{i+1}} = P_{X_i}P_{X_{i+1}} = P_{X_{i+1}}P_{X_i}, \\
X_i \perp X_j \textnormal{ if } |i-j| \geq 2.
\end{gathered}
\end{equation*}
By Lemma \ref{freedom choice spaces}, there exist closed subspaces $Y_i \subset E_i$ such that $\|P_{X_i} - P_{Y_i}\| < \eta_i$, and
\begin{equation} \label{above lemma property}
\|\psi^{(i)}(P_{Z_i},P_{X_i\vee X'},P_{Y_i\vee Y'})e_i - e_{i+1}\| < 4\varepsilon,
\end{equation}
whenever $W_i = e_i\vee e_{i+1}$, and $X',Y',Z_i$ are subspaces such that $X',Y' \subset E_i^\perp$ and $\|P_{W_i} - P_{Z_i}P_{E_i}\|<\delta$. 

We now set $Y_0 = \vee e_1$ and
\begin{equation*}
X = \bigvee_{i \in \mathbb{N}} X_i, \quad Y= \bigvee_{i\in\mathbb{N}_{\geq0}} Y_{2i}, \quad Z = \bigvee_{i\in\mathbb{N}_{\geq0}} Y_{2i+1}.
\end{equation*}
Then setting \[X_i' = \widetilde{F_{i-1}} \vee \widetilde{F_{i+1}} \vee \bigvee_{\substack{j\in\mathbb{N} \\ j\notin \{i-1,i+1\}}}X_j,\]we have $X_i' \perp E_i$ and $X = X_i \vee X_i'$. We proceed to show (\ref{Psi projection inequality}) by considering the cases where $i$ is even and odd separately. 

Suppose first that $i$ is even. Then as above, for each $i \in \mathbb{N}$, we can find a subspace $Y_i'$ of $H$ such that $Y_i' \perp E_i$ and $Y=Y_i \vee Y_i'$. 

We note that $P_ZP_{E_i} = P_{Y_{i-1} \vee Y_{i+1}}P_{E_i}$, $P_{W_i} = P_{X_{i-1} \vee X_{i+1}}P_{E_i}$, $X_{i-1} \perp X_{i+1}$, and $Y_{i-1}\perp Y_{i+1}$. By Lemma \ref{foundational}\ref{sum projections closed}, for orthogonal closed subspaces $U,V$ of $H$, we have that $U+V = U\vee V$.  Applying this, along with Lemma \ref{foundational}\ref{adding projections}, we have
\begin{equation*}
\begin{aligned}
\|P_{W_i} - P_{Z}P_{E_i}\| &= \|P_{X_{i-1} \vee X_{i+1}}P_{E_i} - P_{Y_{i-1} \vee Y_{i+1}}P_{E_i}\| \\
&= \|P_{X_{i-1} + X_{i+1}}P_{E_i} - P_{Y_{i-1} + Y_{i+1}}P_{E_i}\| \\
&= \|(P_{X_{i-1}} + P_{X_{i+1}})P_{E_i} - (P_{Y_{i-1}} + P_{Y_{i+1}})P_{E_i} \| \\
&= \|(P_{X_{i-1}} - P_{Y_{i-1}})P_{E_i} + (P_{X_{i+1}} - P_{Y_{i+1}})P_{E_i} \| \\
&\leq \|P_{X_{i-1}} - P_{Y_{i-1}}\| + \|P_{X_{i+1}} - P_{Y_{i+1}} \| < \eta_{i-1} + \eta_{i+1} \\
&= \min \{\delta_{i-2}, \delta_i \} + \min \{\delta_{i}, \delta_{i+2} \} \leq \delta_i.
\end{aligned}
\end{equation*}
Hence, by (\ref{above lemma property}),
\begin{equation*}
\|\Psi^{(i)}(P_{Z},P_{X},P_{Y})e_i - e_{i+1}\| = \|\psi^{(i)}(P_{Z},P_{X},P_{Y})e_i - e_{i+1}\| < 4\varepsilon_i.
\end{equation*}
If $i$ is odd, then we can find a subspace $Y_i'$ of $H$ such that $Y_i' \perp E_i$ and $Z=Y_i \vee Y_i'$. As above, we show that $\|P_{W_i} - P_{Y}P_{E_i}\| < \delta_i$, and so by (\ref{above lemma property}),
\begin{align*}
& \|\Psi^{(i)}(P_{Z},P_{X},P_{Y})e_i - e_{i+1}\| = \|\psi^{(i)}(P_{Y},P_{X},P_{Z})e_i - e_{i+1}\| < 4\varepsilon_i. \qedhere
\end{align*}
\end{proof}
We are finally able to prove Theorem \ref{big result kopecka}, that a sequence of alternating projections may diverge.

\begin{proof}[Proof of Theorem \ref{big result kopecka}]
For $\varepsilon_i = 9^{-i}$ ($i \in \mathbb{N}$), we pick $\Psi^{(i)}$ as in Lemma \ref{almost there eva}. Let $e_1 = \frac{x_0}{\|x_0\|}$. Since $H$ is infinite-dimensional, we can find an orthonormal set $\{e_i\}_{i=1}^\infty$ with an infinite-dimensional orthogonal complement. We choose closed subspaces $X, Y, Z$ as in Lemma \ref{almost there eva}, renaming them $M_1,M_2$ and $M_3$ respectively. Let $A_k = \Psi^{(k)}(P_{M_1},P_{M_2},P_{M_3})$. We then have, for all $k \in \mathbb{N}$,
\begin{equation} \label{KoPa last}
\begin{aligned}
&\|A_kA_{k-1}\dots A_1e_1 - e_{k+1}\| \\
& \leq \|A_kA_{k-1}\dots A_2(A_1e_1 - e_2)\| + \|A_kA_{k-1}\dots A_2 e_2- e_{k+1}\| \\
& < 4\varepsilon_1 + \|A_kA_{k-1}\dots A_2 e_2- e_{k+1}\| \\
& \leq 4\varepsilon_1 + \|A_kA_{k-1}\dots A_3(A_2e_2 - e_3)\| + \|A_kA_{k-1}\dots A_3e_3- e_{k+1}\| \\
& < 4\varepsilon_1 + 4\varepsilon_2 + \|A_kA_{k-1}\dots A_3e_3- e_{k+1}\| \\
&\vdotswithin{=} \\
&< 4\varepsilon_1 + 4\varepsilon_2 + \dots + 4\varepsilon_{k-1} + \|A_ke_k - e_{k+1}\| \\
&< 4(9^{-1} + \dots + 9^{-k}) < 4 \sum_{k=1}^{\infty}\frac{1}{9^j} = \frac{1}{2}.
\end{aligned}
\end{equation}
By construction, each $A_k$ is some product of orthogonal projections onto $M_1$, $M_2$ or $M_3$. Let $n_k$ be the total number of projections in the product $A_kA_{k-1}\dots A_1$. We define the sequence $(j_n)$ by letting $j_n$ take value $i$ whenever the  $n\textsuperscript{th}$ projection in $A_kA_{k-1}\dots A_1$ is onto $M_i$. We define the sequence $(x_n)$ as in the statement of the theorem, so that $x_{n_k} = A_k \dots A_1 x_0$.

We will now show that the subsequence  $(x_{n_k})_{k\geq1}$ does not converge in norm. This then implies that $(x_n)$ does not converge in norm either, and we are done. 

We define the sequence \[(y_k) = \Big{(}\frac{x_{n_k}}{\|x_0\|}\Big{)} = (A_k\dots A_1e_1).\] By (\ref{KoPa last}), for each $k \in \mathbb{N}$, we have \[ \|y_k-e_{k+1}\| < \frac{1}{2} .\] Applying the reverse triangle inequality and the Cauchy-Schwarz inequality (and noting that $\|e_{k+1}\|$=1), we have
\begin{equation*}
\begin{aligned}
|\langle y_k,e_{k+1} \rangle | &\geq |\langle e_{k+1},e_{k+1} \rangle| - |\langle y_k - e_{k+1},e_{k+1} \rangle| \\
&= 1 - |\langle y_k - e_{k+1},e_{k+1} \rangle| \\
&\geq 1 - \|y_k - e_{k+1}\| \\
&\geq 1- \frac{1}{2} = \frac{1}{2}.
\end{aligned}
\end{equation*}
Now suppose for a contradiction that $y_k$ converges in norm to some limit $y$. Then there is some $m \in \mathbb{N}$ such that for $k\geq m$, \[ \|y_k-y\| < \frac{1}{4}. \] Again, applying the reverse triangle inequality and Cauchy-Schwartz inequality, we have that for $k \geq m$,
\begin{equation*}
\begin{aligned}
|\langle y,e_{k+1} \rangle | &\geq |\langle y_k, e_{k+1} \rangle | - | \langle y - y_k,e_{k+1} \rangle| \\
& \geq |\langle y_k, e_{k+1} \rangle | - \|y-y_k\| \\
& > \frac{1}{2} - \frac{1}{4} = \frac{1}{4}.
\end{aligned}
\end{equation*}
We therefore have, by Bessel's inequality, that
\begin{equation*}
\|y\|^2 \geq \sum_{k=1}^\infty |\langle y,e_k \rangle|^2 \leq \sum_{k=m}^\infty |\langle y,e_{k+1} \rangle|^2 = \sum_{k=m}^\infty \frac{1}{16} = \infty,
\end{equation*}
a contradiction. Hence $(y_k)$ does not converge in norm. Therefore $(x_{n_k})$ does not converge in norm, and so neither does $(x_n)$. This completes the proof.
\end{proof}
 
\subsection{An extension}

In fact, Kopeck\'a and Paszkiewicz went on to prove that there exist three closed subspaces in $H$ such that for any non-zero vector $x_0 \in H$, there is some sequence of projections $(j_n)$ for which $(x_n)$ does not converge in norm  \cite{KoPa17}. In particular, here we begin by choosing three subspaces, and then given a non-zero vector $x_0 \in H$, we find an appropriate sequence $(j_n)$. This is in contrast with Theorem \ref{big result kopecka}, where we first find a sequence $(j_n)$, and then given a non-zero vector $x_0 \in H$, we find appropriate subspaces.

The main idea of the proof is showing the following. Suppose we have three closed subspaces $X_1,X_2,X_3 \subset H$, a non-zero vector $x_0 \in H$, and a sequence of projections $(j_n)$ such that $(x_n)$ does not converge in norm (which we know is possible by Theorem \ref{big result kopecka}). Then we may find a closed infinite-dimensional subspace $L$ of $H$ such that for every non-zero $y_0 \in L$, there exists a sequence $(k_n)$ taking values in $\{1,2,3\}$ such that the sequence given by
\begin{equation*}
y_n = P_{Y_{k_n}}y_{n-1}, \quad n\geq 1,
\end{equation*}
does not converge in norm, where $Y_i = X_i \cap L$ for $i \in \{1,2,3\}$.

The proof is technical, non-constructive, and not directly relevant to our focus, so we omit it.

We end the section with a brief remark. As mentioned in Section \ref{concluding remarks}, Sakai's paper \cite{Sak95} ends by posing the following open question. For an arbitrary sequence $s = (j_n)$, does (\ref{key inequality}) always hold with $A=J-1$? That is, does
\begin{equation*}
\|x_n - x_m\|^2 \leq A \sum_{k=m}^{n-1} \|x_{k+1} - x_k \|^2
\end{equation*}
hold with $A=J-1$, and any $n \geq m \geq 1$?

In light of Theorem \ref{big result kopecka},  this is easily resolved. We find a sequence $s$ for which (\ref{key inequality}) does not hold for any constant $A$.

\begin{corollary} \label{Sakai open question}
There exists a sequence s, and subspaces $M_1,M_2,M_3$ in $H$ such that there is no constant $A$ for which
\begin{equation*}
\|x_n - x_m\|^2 \leq A \sum_{k=m}^{n-1} \|x_{k+1} - x_k \|^2
\end{equation*}
holds for any $n>m\geq1$.
\end{corollary}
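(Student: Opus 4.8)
The plan is to observe that the corollary is simply the contrapositive of Lemma \ref{key result sakai}, applied to the bad example furnished by Theorem \ref{big result kopecka}. First I would recall exactly what Lemma \ref{key result sakai} asserts: if there is a constant $A$ such that $\|x_n - x_m\|^2 \leq A\sum_{k=m}^{n-1}\|x_{k+1}-x_k\|^2$ for all $n > m \geq 1$, then $(x_n)$ converges in norm. Reading this contrapositively, if $(x_n)$ fails to converge in norm, then no constant $A$ can make that inequality hold for all $n > m \geq 1$.

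Next I would invoke Theorem \ref{big result kopecka}. Taking $H$ to be infinite-dimensional and fixing any non-zero vector $x_0 \in H$, the theorem supplies a sequence $(j_n)$ and three closed subspaces $M_1, M_2, M_3 \subset H$, intersecting only at the origin, such that the associated sequence $(x_n)$ does not converge in norm. It is worth flagging here that the choice $x_0 \neq 0$ is essential: Theorem \ref{big result kopecka} presupposes it, and indeed if $x_0 = 0$ the sequence $(x_n)$ would be identically zero and the desired inequality would hold trivially with every $A$.

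Combining the two observations: for this $s = (j_n)$ and these $M_1, M_2, M_3$ (with the stated $x_0$), the sequence $(x_n)$ does not converge in norm, so by the contrapositive of Lemma \ref{key result sakai} there is no constant $A$ for which the inequality holds for all $n > m \geq 1$. This is precisely the statement of the corollary; in particular it answers Sakai's question in the negative, since the conjectured value $A = J - 1 = 2$ certainly fails. There is no genuine obstacle in this argument — all of the difficulty resides in Theorem \ref{big result kopecka} — so the only points demanding attention are to apply Lemma \ref{key result sakai} in its contrapositive form and to check that the hypotheses of Theorem \ref{big result kopecka} ($H$ infinite-dimensional, $x_0 \neq 0$) are in force.
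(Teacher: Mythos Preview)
Your proposal is correct and takes essentially the same approach as the paper: invoke Theorem \ref{big result kopecka} to obtain a sequence $s$, a non-zero $x_0$, and subspaces $M_1,M_2,M_3$ for which $(x_n)$ fails to converge in norm, and then apply Lemma \ref{key result sakai} (the paper phrases it as a proof by contradiction rather than a contrapositive, but this is the same argument). Your additional remark that $x_0\neq 0$ is required is a helpful clarification that the paper leaves implicit.
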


\begin{proof}
Let $s=(j_n)$ be the sequence in Theorem \ref{big result kopecka}. We pick a vector $x_0 \in H$, and choose $M_1$, $M_2$, and $M_3$ according to this theorem. Suppose for a contradiction there is such a constant $A$. Then by Lemma \ref{key result sakai}, we have that $x_n$ converges in norm, contradicting Theorem \ref{big result kopecka}.
\end{proof}
\newpage

\section{Concluding remarks}

There is a lot of interesting mathematics related to the method of alternating projections that we could not fit into this dissertation. Two main areas we have not covered are what happens when we have closed convex subsets instead of closed subspaces, and the rate of convergence in the method of alternating projections.

\subsection{Closed convex subsets}
There are many extensions of the method of alternating projections. These include considering closed convex subsets rather than closed subspaces, contractions rather than projections, or generalising results to Banach spaces with certain properties (for example, considering a uniformly convex Banach space instead of a Hilbert space; see \cite{BaLy10, BaSe17, BrRe77}). Here, we give a brief summary of known results when we have closed convex subsets.

We begin by remarking that we can indeed define a projection $P$ onto a closed convex subset $C$ of $H$. By the Hilbert projection theorem, for any $x \in H$, there exists a unique $y \in C$ minimising $\|x-y\|$ over $C$. We define the projection $P_C$ of $x$ onto $C$ by $P_C(x) = y$.

In 1965, Bregman proved that any sequence of periodic projections converges weakly to an element in the intersection of the closed convex subsets (assuming the intersection is non-empty) \cite{Bre65}. We note that the intersection of a finite number of closed convex subsets is also closed and convex. However, as opposed to the case of closed subspaces, the point we converge to need not be the projection onto the intersection of the closed convex subsets. We offer an example to illustrate this.

\begin{figure}[H]
\begin{center}
\includegraphics[width=100mm]{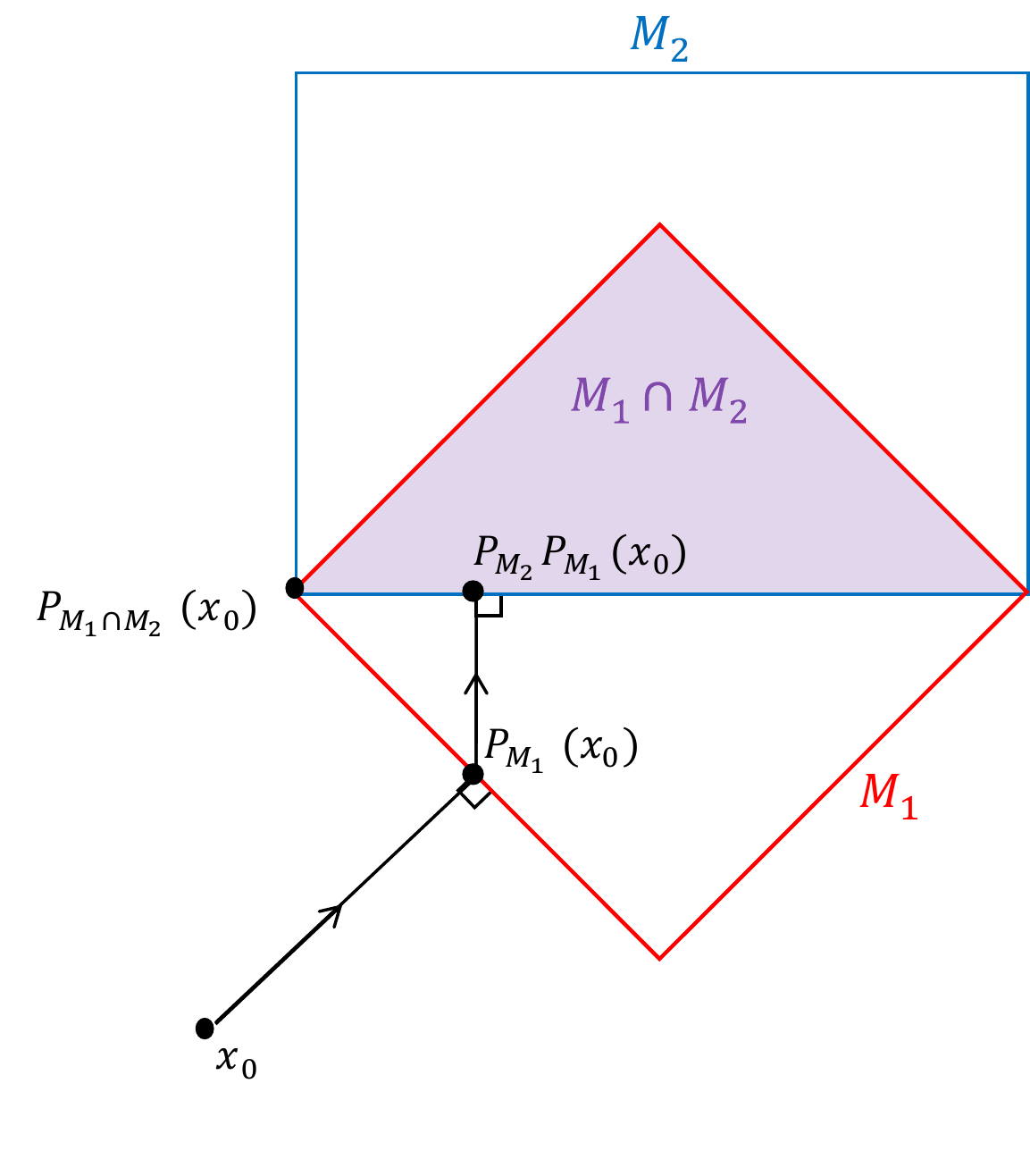}
\caption{An example of two closed convex subsets where $(x_n)$ does not converge to the projection of $x_0$ onto the intersection of the two subsets}
\label{intersection projection}
\end{center}
\end{figure}

We note that Bregman's result implies that we have convergence in norm for periodic projections when $H$ is finite-dimensional, since in this case, convergence in norm and weak convergence are equivalent. In fact, an identical argument to our proof of Lemma \ref{Sakai mistake} shows that it is enough for only one of the closed subsets to be contained in a finite-dimensional space.

As in the case for closed subspaces, it is natural to ask if we always have convergence in norm. In 2004, Hundal constructed an example of two closed convex subsets $C_1$ and $C_2$, intersecting only at the origin, such that $(P_{C_2}P_{C_1})^n$ converges weakly to $0$ (by Bregman's result), but does not converge in norm \cite{Hun04}. In fact, this proof was an important input towards Paszkiewicz's construction of five subspaces, a non-zero vector $x_0 \in H$, and a sequence $(j_n)$ such that $(x_n)$ does not converge in norm \cite{KoPa17, Pas12}.

\subsection{Rates of convergence}
Let $H=\mathbb{R}^2$, and let $\theta \in (0,\pi/2)$ be fixed. We consider the two closed subspaces 
\begin{align*}
M_1 &= \big{\{}(x,y) \in \mathbb{R}^2 : x=y\big{\}}, \\
M_2 &= \big{\{}(t\cos \theta,t \sin \theta) \in \mathbb{R}^2 : t \in \mathbb{R}\big{\}}.
\end{align*}
Our example in the introduction is the case $\theta = \pi/4$. Looking at Figure \ref{alternating projections}, it is not surprising that if we increase the angle $\theta$, we converge faster to $M = M_1\cap M_2 = \{0\}$. 
What may be more surprising is that we may extend this idea to define the notion of an angle between subspaces. 

The Friedrichs angle between two closed subspaces $M_1$ and $M_2$ of $H$ is defined to be the angle in $[0,\frac{\pi}{2}]$ whose cosine is given by
\begin{equation*}
c(M_1,M_2) = \sup \{ |\langle x,y \rangle | : x \in M_1 \cap M^\perp \cap B_H, y\in M_2 \cap M^\perp \cap B_H \}.
\end{equation*}
It is known that for all $n\geq1$, we have
\begin{equation*}
\|(P_{M_2}P_{M_1})^n - P_M\| = c(M_1,M_2)^{2n-1}.
\end{equation*}
The upper bound was proved by Aronszajn \cite{Aro50}, and equality by Kayalar and Weinert \cite{KaWe88}. Hence, letting $T=P_{M_2}P_{M_1}$, we have that $T^n$ converges uniformly (in operator norm) to $P_M$ if and only if $c(M_1,M_2)<1$ (i.e. the Friedrichs angle between $M_1$ and $M_2$ is positive). When this happens, $T^n$ converges uniformly to $P_M$ at a geometric rate, in the sense that there exist $C \geq 0 $ and $\alpha \in (0,1)$ such that
\begin{equation*}
\|T^n - P_M\| \leq C\alpha^n, \quad n\geq 1.
\end{equation*}
It turns out that $c(M_1,M_2)=1$ can only happen in an infinite-dimensional space. For $c(M_1,M_2)=1$, we do not have uniform convergence, but we still have strong convergence (for every $x \in H$, $\|T^nx - P_Mx\| \to 0$) by Theorem \ref{von neumann} (von Neumann). In 2009, Bauschke, Deutsch and Hundal \cite{BaDeHu09} proved that in this case, convergence is arbitrarily slow, in the sense that for any monotonically decreasing sequence $(\lambda_n)$ in $[0,1]$ tending to $0$, there exists $x_\lambda \in H$ such that
\begin{equation*}
\|T^n(x_\lambda) - P_M(x_\lambda)\| \geq \lambda_n, \quad n\geq 1.
\end{equation*}
Hence we have a dichotomy:
\begin{equation*}
\begin{aligned}
&c(M_1,M_2) < 1 \implies \textnormal{ convergence at a uniform geometric rate}, \\
&c(M_1,M_2) = 1 \implies \textnormal{ arbitrarily slow convergence}.
\end{aligned}
\end{equation*}
In 2012, Badea, Grivaux and M\"uller \cite{BaGrMu12} extended the notion of Friedrichs angle and the results discussed above to the case of $J\geq2$ closed subspaces $M_1,\dots,M_J$. In particular, the same dichotomy still holds, except with $c(M_1,M_2)$ replaced by $c(M_1,\dots, M_J)$.

The most recent result concerning the rate of convergence is the following. Let $M = \bigcap_{j=1}^J M_j$ be the intersection of $J$ closed subspaces, and $T=P_{M_J}\dots P_{M_1}$. In 2017, Badea and Seifert \cite{BaSe16} proved that there exists a dense subspace $H_0$ of $H$ such that for any $x_0 \in H_0$, we have
\begin{equation*}
\|T^n(x_0) - P_M(x_0)\| = o(n^{-k}), \quad k\geq1.
\end{equation*}
They referred to this as `super-polynomially fast' convergence \cite{BaSe16}. Their result tells us that given $\varepsilon>0$, even in the bad case where $c(M_1,\dots, M_J)=1$, if we pick an initial point where we have slow convergence, we are a distance of less than $\varepsilon$ away from a point where we have super-polynomially fast convergence. 

For applications, it would be useful to be able to get a better idea of where the points (elements of $H$) that give fast and slow convergence are located. However, fairly little is known about this. Nevertheless, there is a conjecture by Deutsch and Hundal as to where points that give slow convergence can be found \cite{DeHu10}. The paper proves equivalent conditions for $c(M_1,\dots,M_J)<1$, from which it follows that
\begin{equation*}
c(M_1,\dots, M_J) = 1 \iff \sum_{j=1}^J M_j^\perp \textnormal{ is not closed in } H.
\end{equation*}
In this case, we know that given a monotonically decreasing sequence $(\lambda_n)$ in $[0,1]$ tending to $0$, there exists $x_\lambda \in H$ such that
\begin{equation*}
\|T^n(x_\lambda) - P_M(x_\lambda)\| \geq \lambda_n, \quad n\geq 1.
\end{equation*}

Deutsch and Hundal's conjecture is that for $(\lambda_n)$ tending to $0$ sufficiently slowly,
\begin{equation*}
x_\lambda \in M^\perp \setminus \sum_{j=1}^J M_j^\perp.
\end{equation*}
This would be useful in knowing how to avoid points where we have slow convergence, but it remains to be seen if this conjecture is true.
\newpage
\subsection{Conclusion}

In this dissertation, we presented proofs of some well known results concerning the method of alternating projections. These include an original proof of von Neumann's theorem \cite{von49}, clarifying a remark in Sakai's paper \cite{Sak95}, and simplifying Amemiya and Ando's proof \cite{AmAn65} for the case of orthogonal projections. The key results are that $(x_n)$ always converges weakly, $(x_n)$ converges in norm when $(j_n)$ is quasiperiodic (and in particular periodic), and that we may find a sequence $(j_n)$, such that for any given vector $x_0 \in H$, we may find three closed subspaces intersecting only at the origin, for which $(x_n)$ does not converge in norm.

Beyond those mentioned in the dissertation, we do not know of any other results regarding the convergence of $(x_n)$. In particular, given a sequence $(j_n)$ that is not quasiperiodic, and with none of the closed subspaces $M_j$ finite-dimensional, no further results are available to determine whether $(x_n)$ converges in norm. Whether in the future we will be able to say more about the convergence of $(x_n)$ remains to be seen.

\newpage
\renewcommand{\abstractname}{Acknowledgements}
\begin{abstract}
\thispagestyle{plain}
I would like to thank David Seifert for sparking my interest in the method of alternating projections, and for taking the time to supervise my dissertation.
\end{abstract}

\newpage
\nocite{*}
\bibliography{master}
\bibliographystyle{IEEEtranS}

\end{document}